\pdfoutput=1 
\documentclass[10pt]{amsart}
\input{header}



\newcommand{\defn}{\emph}

\newcommand{\period}{\rlap{\ .}}
\newcommand{\comma}{\rlap{\ ,}}
\newcommand{\andeq}{\text{\qquad and\qquad}}


\newcommand{\colonequals}{\coloneq}
\newcommand{\coproduct}{\sqcup}
\newcommand{\coproductlimits}{\operatornamewithlimits{\coproduct}}
\newcommand{\timeslimits}{\operatornamewithlimits{\times}}
\renewcommand{\ast}{{\mathbin{*}}}

\newcommand{\isomto}{\similarrightarrow}
\newcommand{\equivto}{\similarrightarrow}
\newcommand{\equivalent}{\simeq}

\newcommand{\wedgesum}{\vee}
\newcommand{\smashprod}{\wedge}


\DeclareMathOperator{\pr}{pr}
\DeclareMathOperator{\op}{op}
\renewcommand{\colim}{\operatornamewithlimits{colim}}
\DeclareMathOperator{\Grp}{Grp}
\DeclareMathOperator{\Pro}{Pro}
\DeclareMathOperator{\Cech}{\check{C}}
\DeclareMathOperator{\trun}{\tau}

\newcommand{\Hopf}{h}

\renewcommand{\E}{\operatorname{E}}

\DeclareMathOperator{\Spec}{Spec}
\DeclareMathOperator{\Lmot}{L_{mot}}
\newcommand{\Eone}{\mathbf{E}_{1}}

\newcommand{\Sph}[1]{\upS^{#1}}


\newcommand{\paren}[1]{\left(#1\right)}


\newcommand{\J}{\mathup{J}}

\newcommand{\uppi}{\mathup{\pi}}
\newcommand{\uprho}{\mathup{\rho}}
\newcommand{\upsigma}{\mathup{\sigma}}

\newcommand{\upC}{\mathup{C}}
\newcommand{\upS}{\mathup{S}}
\newcommand{\upU}{\mathup{U}}


\renewcommand{\GG}{\mathbf{G}}
\newcommand{\GGm}{\GG_{\operatorname{m}}}
\newcommand{\ZZ}{\mathbf{Z}}


\newcommand{\Spc}{\mathbf{Spc}}
\renewcommand{\Set}{\mathbf{Set}}
\newcommand{\Deltaop}{\mathbf{\Delta}^{\op}}

\DeclareMathOperator{\Nis}{nis}
\renewcommand{\Sm}{\mathbf{Sm}}
\newcommand{\Shnis}{\Sh_{\Nis}}

\newcommand{\dF}{\mathcal{F}}
\newcommand{\HH}{\mathbf{H}}
\newcommand{\dI}{\mathcal{I}}



\usepackage{enumerate}
\newcommand{\enumref}[2]{(\hyperref[#1.#2]{\ref*{#1}.\ref*{#1.#2}})}


\defbibheading{references}[\refname]{%
    \section*{#1}%
    \addcontentsline{toc}{part}{References} %
    \markboth{#1}{#1}
    }

\addbibresource{main.bib}


\title{On the James and Hilton--Milnor Splittings, \& the metastable EHP sequence}

\author{Sanath Devalapurkar}

\author{Peter Haine}

\date{\today}

\begin{document}

\begin{abstract}
    This note provides modern proofs of some classical results in algebraic topology, such as the James Splitting, the Hilton--Milnor Splitting, and the metastable EHP sequence.
    We prove fundamental splitting results
    \begin{equation*}
        \Sigma \Omega \Sigma X \simeq \Sigma X \vee (X\wedge \Sigma\Omega \Sigma X) \andeq \Omega(X \vee Y) \simeq \Omega X\times \Omega Y\times \Omega \Sigma(\Omega X \wedge \Omega Y)
    \end{equation*}
    in the maximal generality of an $\infty$-category with finite limits and pushouts in which pushouts squares remain pushouts after basechange along an arbitrary morphism (i.e., Mather's Second Cube Lemma holds).
    For connected objects, these imply the classical James and Hilton--Milnor Splittings.
    Moreover, working in this generality shows that the James and Hilton--Milnor splittings hold in many new contexts, for example in: elementary $ \infty $-topoi, profinite spaces, and motivic spaces over arbitrary base schemes.
    The splitting results in this last context extend Wickelgren and Williams' splitting result for motivic spaces over a perfect field.
    We also give two proofs of the metastable EHP sequence in the setting of $ \infty $-topoi: the first is a new, non-computational proof that only utilizes basic connectedness estimates involving the James filtration and the Blakers--Massey Theorem, while the second reduces to the classical computational proof.
\end{abstract}

\maketitle

\tableofcontents


\section{Introduction}

A classical result of James shows that given a pointed connected space $ X $, the homotopy type $ \Sigma\Omega\Sigma X $ given by suspending the loopspace on the suspension of $ X $ splits as a wedge sum
\begin{equation}\label{eq:infJames}
    \Sigma\Omega\Sigma X \equivalent \bigvee_{i\geq 1} \Sigma X^{\wedge i}
\end{equation}
of suspensions of smash powers of $ X $ \cites{MR503007}{MR73181}.
Hilton and Milnor proved a related splitting result \cites{MR68218}{MR69492}[Theorem 3]{Milnor:FK}: given pointed connected spaces $ X $ and $ Y $, they showed that there is a homotopy equivalence
\begin{equation}\label{eq:infHM}
    \Omega\Sigma(X \wedgesum Y) \equivalent \Omega\Sigma X \times \Omega\Sigma Y \times \Omega\Sigma\paren{\bigvee_{i,j \geq 1} X^{\wedge i} \smashprod Y^{\wedge j}} \period
\end{equation}
In the classical setting, these splitting results follow from combining a connectedness argument using the hypothesis that $ X $ and $ Y $ are connected with the following more fundamental splittings: given any pointed spaces $ X $ and $ Y $, there are natural equivalences
\begin{equation}\label{eq:JamesHMfundamental}
    \Sigma \Omega \Sigma X \simeq \Sigma X \vee (X\wedge \Sigma\Omega \Sigma X) \andeq \Omega(X \vee Y) \simeq \Omega X\times \Omega Y\times \Omega \Sigma(\Omega X \wedge \Omega Y) \period
\end{equation}
The first objective of this note is to provide clear, modern, and non-computational proofs of these fundamental splittings \eqref{eq:JamesHMfundamental}. 
The only property particular to the $ \infty $-category of spaces that our proofs utilize is \textit{Mather's Second Cube Lemma} \cite{MR402694}; this asserts that pushout squares remain pushouts after basechange along an arbitrary morphism (see \cref{subsec:universalpushout}).
Hence these `fundamental' James and Hilton--Milnor Splittings hold in any $ \infty $-category where we can make sense of suspensions, loops, wedge sums, and smash products, and have access to Mather's Second Cube Lemma:

\begin{theorem}[Fundamental James Splitting; \Cref{james}]\label{introthm:James}
    Let $\dX$ be an $\infty$-category with finite limits and pushouts, and assume that Mather's Second Cube Lemma holds in $ \dX $.
    Then for every pointed object $ X\in \dX_{\ast} $, there is a natural equivalence
    \begin{align*}
       \Sigma \Omega \Sigma X &\simeq \Sigma X \vee (X\wedge \Sigma\Omega \Sigma X) \period \\ 
        \intertext{Moreover, for each integer $ n \geq 1 $ then there is a natural equivalence}
        \Sigma \Omega \Sigma X &\simeq \paren{\bigvee_{1 \leq i \leq n} \Sigma X^{\wedge i}} \vee (X^{\smashprod n} \wedge \Sigma\Omega \Sigma X) \period
    \end{align*}
\end{theorem}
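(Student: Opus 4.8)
The plan is to reduce everything to one pushout square --- the ``James square'' --- together with the standard splitting of a suspended product. To build the James square, apply Mather's Second Cube Lemma (\cref{subsec:universalpushout}) to the defining pushout square of the suspension, namely the one presenting $\Sigma X$ as the pushout of the span $\ast \leftarrow X \rightarrow \ast$ in $\dX_\ast$, and base-change it along the basepoint $\ast \to \Sigma X$. The map $X\to\Sigma X$ factors through $\ast$, so $\ast\times_{\Sigma X}X \simeq \Omega\Sigma X\times X$, while $\ast\times_{\Sigma X}\ast\simeq\Omega\Sigma X$; the base-changed square is therefore a pushout square exhibiting $\ast$ as the pushout of a span $\Omega\Sigma X \xleftarrow{\ell_1} \Omega\Sigma X\times X \xrightarrow{\ell_0} \Omega\Sigma X$ in which $\ell_0$ is the projection to $\Omega\Sigma X$. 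The feature that makes the argument work --- read off from the functoriality of $\ast\times_{\Sigma X}(-)$ applied to the section $x_0\colon\ast\to X$ of $X\to\ast$ --- is that \emph{both} $\ell_0$ and $\ell_1$ admit the map $\sigma := (\id, x_0)\colon \Omega\Sigma X\to\Omega\Sigma X\times X$ as a section. (One can identify $\ell_1$ as the twist of $\ell_0$ by the James map $X\to\Omega\Sigma X$, but the proof never needs this.)

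Write $Y := \Omega\Sigma X$. The common section $\sigma$ produces a map of span-shaped diagrams $(\id_Y, \sigma, \id_Y)$ from the constant span $Y\xleftarrow{\id}Y\xrightarrow{\id}Y$ into the James span $Y\xleftarrow{\ell_1}Y\times X\xrightarrow{\ell_0}Y$; its cofiber, computed objectwise, is the span $\ast\leftarrow\cofib(\sigma)\to\ast$. As the colimit functor on span-shaped diagrams is cocontinuous, it sends this cofiber sequence of diagrams to a cofiber sequence $Y\to\ast\to\Sigma\cofib(\sigma)$ --- here the middle term is $\ast$ by the James square, and the pushout of $\ast\leftarrow Z\rightarrow\ast$ is $\Sigma Z$. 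Thus $\Sigma\cofib(\sigma)\simeq\cofib(Y\to\ast)\simeq\Sigma Y = \Sigma\Omega\Sigma X$. On the other hand, suspending the cofiber sequence $Y\xrightarrow{\sigma}Y\times X\to\cofib(\sigma)$ and invoking the standard decomposition $\Sigma(Y\times X)\simeq\Sigma Y\vee\Sigma X\vee\Sigma(Y\wedge X)$ --- available since suspensions are cogroup objects --- under which $\Sigma\sigma$ is the inclusion of the $\Sigma Y$-summand, and then collapsing that summand, gives $\Sigma\cofib(\sigma)\simeq\Sigma X\vee\Sigma(Y\wedge X)$. Since $\Sigma(Y\wedge X)\simeq X\wedge\Sigma Y$ (a consequence of the universality of colimits, which follows from Mather's cube lemma), combining the two computations of $\Sigma\cofib(\sigma)$ yields the first equivalence; it is natural in $X$ because each construction above is.

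For the iterated equivalence, induct on $n$, with $n=1$ handled above. Assuming the statement for $n$, apply the case $n = 1$ to the last wedge summand: because $X^{\wedge n}\wedge(-)$ preserves wedge sums (once more by universality of colimits) and $X^{\wedge n}\wedge\Sigma X\simeq\Sigma X^{\wedge (n+1)}$, we obtain $X^{\wedge n}\wedge\Sigma\Omega\Sigma X \simeq \Sigma X^{\wedge (n+1)}\vee\bigl(X^{\wedge (n+1)}\wedge\Sigma\Omega\Sigma X\bigr)$, which upgrades the equivalence at stage $n$ to the one at stage $n+1$.

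The main obstacle is the first step: carrying out the base-change identifications in the James square precisely enough that $\sigma$ is visibly a section of both $\ell_0$ and $\ell_1$. The twisting by the James map is genuinely present, but it is an automorphism of $\Omega\Sigma X\times X$ fixing the subobject $\Omega\Sigma X\times\{x_0\}$, so it causes no trouble; the cleanest route is not to choose any identification of the two legs and instead to extract the common section purely from functoriality of base change.
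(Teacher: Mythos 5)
Your proposal is correct and rests on the same two pillars as the paper's proof: Mather's Second Cube Lemma applied to the pushout square defining $\Sigma X$, base-changed along the basepoint, to produce the pushout square of \Cref{lem:pr2pushout}, and a splitting of the suspension of a product, followed by the same iteration via $\Sigma(X\smashprod Y)\simeq X\smashprod\Sigma Y$ (\Cref{lem:suspensionsmash}) and distributivity of smash over wedge. Where you genuinely diverge is the middle step. The paper computes $\cofib(\pr_2\colon X\times\Omega\Sigma X\to\Omega\Sigma X)$ in two ways --- by pasting the square of \Cref{lem:pr2pushout} with the suspension square to get $\Sigma\Omega\Sigma X$ (\Cref{cor:cofibpr2}), and by the general formula $\cofib(\pr_2)\simeq\Sigma X\wedgesum\Sigma(X\smashprod\Omega\Sigma X)$ of \enumref{cofiber-product}{1} --- so it only ever uses the projection leg and needs to know nothing about the other leg $a_X$ (no section, no untwisting). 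You instead compute $\Sigma\cofib(\sigma)$ two ways, where $\sigma=(\id,x_0)$ is a common section of both legs; this works (the identification $\Sigma\cofib(\sigma)\simeq\Sigma\Omega\Sigma X$ via the objectwise cofiber of the map of spans is an instance of \Cref{lem:3by3}), and your functoriality-of-base-change argument is the right way to see that $\sigma$ is a section of the twisted leg as well, but it costs two verifications the paper's route avoids: the common-section property itself, and the claim that $\Sigma\sigma$ is the $\Sigma\Omega\Sigma X$-summand inclusion under the product splitting --- the latter is true, but it requires the \emph{naturality} of the splitting in the pair (apply it along $(\Omega\Sigma X,\ast)\to(\Omega\Sigma X,X)$), not merely its existence. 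One justification must be repaired: in this generality the decomposition $\Sigma(Y\times X)\simeq\Sigma Y\wedgesum\Sigma X\wedgesum\Sigma(Y\smashprod X)$ does not follow from suspensions being cogroup objects, since a comparison map built from the co-H structure cannot be checked to be an equivalence without a Whitehead-type theorem; it has to be established by pushout manipulations as in \Cref{pushout} and \Cref{cofiber-product}, or by splitting the suspended cofiber sequence of $\sigma$ using the retraction $\Sigma\pr_1$ and the dual of \Cref{lem:split}. With that substitution your argument is complete; what it buys is an explicit appearance of the half-smash $\cofib(\sigma)$ with $\Sigma\cofib(\sigma)\simeq\Sigma\Omega\Sigma X$, while the paper's route is leaner because the leg $a_X$ stays anonymous throughout.
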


In general, the infinite splitting \smash{$ \Sigma\Omega\Sigma X \equivalent \bigvee_{i\geq 1} \Sigma X^{\wedge i} $} need not hold; roughly speaking, the problem is that if $ X $ is not connected, then $ (X^{\smashprod n} \wedge \Sigma\Omega \Sigma X) $ need not vanish as $ n \to \infty $.
However, there is always a natural map \smash{$ \bigvee_{i\geq 1} \Sigma X^{\wedge i} \to \Sigma\Omega\Sigma X $}. 

\begin{theorem}[Fundamental Hilton--Milnor Splitting; \Cref{hilton-milnor}]\label{introthm:Hilton-Milnor}
    Let $\dX$ be an $\infty$-category with finite limits and pushouts, and assume that Mather's Second Cube Lemma holds in $ \dX $.
    Then for every pair of pointed objects $X,Y\in \dX_{\ast}$ there is an natural equivalence
    \begin{align*}
        \Omega(X \vee Y) &\simeq \Omega X\times \Omega Y\times \Omega \Sigma(\Omega X \wedge \Omega Y) \period 
    \end{align*}
\end{theorem}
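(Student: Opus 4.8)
The plan is to first identify the homotopy fiber of the canonical map $q\colon X\vee Y\to X\times Y$ with $\Sigma(\Omega X\wedge\Omega Y)$ using Mather's Second Cube Lemma, and then to split off $\Omega X\times\Omega Y$ from $\Omega(X\vee Y)$ by looping the resulting fiber sequence.

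The starting observation is that the commuting square with corners $\ast,X,Y,X\times Y$ whose right and bottom edges are the inclusions $i_X\colon X\to X\times Y$ and $i_Y\colon Y\to X\times Y$ of the two factors is a pullback square; indeed $i_X$ is the base change of $\ast\to Y$ along $\pr_Y$, so the pullback of $i_X$ and $i_Y$ is $\ast\times_Y Y\simeq\ast$. Consequently $q$ exhibits $(X\vee Y,q)=(X\sqcup_\ast Y,q)$ as the pushout \emph{formed in the slice $\dX_{/X\times Y}$} of the span $(X,i_X)\leftarrow(\ast)\to(Y,i_Y)$, and taking fibers of the edges of this pullback square gives $\fib(i_X)\simeq\Omega Y$ and $\fib(i_Y)\simeq\Omega X$, while $\fib(\ast\to X\times Y)=\Omega X\times\Omega Y$ and the span maps become the two projections.

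The key input is that the fiber functor $\fib:=\ast\times_{X\times Y}(-)\colon\dX_{/X\times Y}\to\dX$ preserves pushouts: for any pushout square over $X\times Y$ with total object $E$, base change along $\ast\to X\times Y$ can be rewritten, via $\ast\times_{X\times Y}(-)\simeq(\ast\times_{X\times Y}E)\times_E(-)$ on objects over $E$, as the base change of the underlying pushout square of $\dX$ along the morphism $\ast\times_{X\times Y}E\to E$, which remains a pushout by Mather's Second Cube Lemma. Applying this to the slice pushout above yields a natural equivalence
\[
\fib(q)\ \simeq\ \Omega Y\sqcup_{\Omega X\times\Omega Y}\Omega X\ =\ \Omega X\star\Omega Y\ \simeq\ \Sigma(\Omega X\wedge\Omega Y),
\]
where the join of the pointed objects $\Omega X$ and $\Omega Y$ is identified with the suspension of their smash by a routine pasting of pushout squares valid in $\dX$.

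It remains to apply $\Omega$ to the fiber sequence $\Sigma(\Omega X\wedge\Omega Y)\to X\vee Y\xrightarrow{q}X\times Y$, giving a fiber sequence of group objects $\Omega\Sigma(\Omega X\wedge\Omega Y)\to\Omega(X\vee Y)\xrightarrow{\Omega q}\Omega X\times\Omega Y$ in which $\Omega q$ is a homomorphism admitting the section $\sigma\colon\Omega X\times\Omega Y\to\Omega(X\vee Y)$ given by the product $\Omega\iota_X\cdot\Omega\iota_Y$ of the looped wedge inclusions (formed using the multiplication of $\Omega(X\vee Y)$); that $\Omega q\circ\sigma\simeq\id$ follows since $q\iota_X=i_X$ and $q\iota_Y=i_Y$ project isomorphically onto the two factors of $X\times Y$. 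A fiber sequence of group objects with a section is split — the map $(s,a)\mapsto s\cdot\sigma(a)$ from $\fib(\Omega q)\times(\Omega X\times\Omega Y)$ to $\Omega(X\vee Y)$ has inverse $g\mapsto(g\cdot\sigma(\Omega q(g))^{-1},\,\Omega q(g))$, a verification using only the group structure and so valid in $\dX$ — and this produces the desired natural equivalence $\Omega(X\vee Y)\simeq\Omega\Sigma(\Omega X\wedge\Omega Y)\times\Omega X\times\Omega Y$. The one genuinely substantive step is the third paragraph: presenting $X\vee Y$ as a pushout \emph{over} $X\times Y$ and checking that the basepoint-fiber functor commutes with it, which is exactly — and only — where Mather's Second Cube Lemma enters; the rest is routine, modulo tracking naturality in $(X,Y)$ throughout.
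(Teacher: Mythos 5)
Your proposal is correct and follows essentially the same route as the paper: you identify $\fib(X\vee Y\to X\times Y)\simeq\Sigma(\Omega X\wedge\Omega Y)$ by using Mather's Second Cube Lemma (equivalently, universality of pushouts) to see that pulling back the slice-pushout presenting $X\vee Y$ over $X\times Y$ along the basepoint preserves the pushout, which is exactly the paper's cube argument in \Cref{lem:fibwedgetoprod}, combined with the join-equals-suspension-of-smash identification of \Cref{pushout}; you then loop, build the section from the looped wedge inclusions and the group multiplication, and invoke the Splitting Lemma (\Cref{lem:split}), just as the paper does. The only differences are presentational -- packaging the cube as ``the basepoint-fiber functor preserves pushouts'' and sketching the Splitting Lemma (which the paper leaves as an exercise, and whose elementwise verification should be phrased diagrammatically for group objects in $\dX$) -- so there is nothing substantive to change.
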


\noindent In \cref{subsec:infsplitting} we explain in what generality the the infinite James Splitting \eqref{eq:infJames} and Hilton--Milnor Splitting \eqref{eq:infHM} hold, and how to deduce them from \Cref{introthm:James,introthm:Hilton-Milnor}.

It might seem that knowing that the James and Hilton--Milnor Splittings in this level of generality is of dubious advantage; the settings in which one is most likely to want to apply these splittings are the $ \infty $-category $ \Spc $ of spaces (where the results are already known), or an $ \infty $-topos (where the results follows immediately from the results for $ \Spc $; see \cref{subsec:infsplitting}).
However, algebraic geometry provides an example that does not immediately follow from the result for spaces: motivic spaces.
The obstruction is that the $ \infty $-category of motivic spaces over a scheme is not an $ \infty $-topos; since motivic localization almost never commutes with taking loops, knowing the James and Hilton--Milnor Splittings in the $ \infty $-topos of Nisnevich sheaves does not allow one to deduce that they hold in motivic spaces.

Wickelgren and Williams used the \textit{James filtration} to prove that the infinite James Splitting \eqref{eq:infJames} holds for $ \AA^1 $-connected motivic spaces over a perfect field \cite[Theorem 1.5]{MR3981318}.
The reason for the restriction on the base is because their proof relies on Morel's unstable $ \AA^1 $-connectivity Theorem \cite[Theorems 5.46 and 6.1]{MR2934577}, which implies that motivic localization commutes with loops \cites[Theorem 2.4.1]{MR3654105}[Theorem 6.46]{MR2934577}.
However, the unstable $ \AA^1 $-connectivity property does not hold for higher-dimensional bases \cites[Remark 3.3.5]{MR3654105}{MR2235615}, so a different method is needed if one wants to prove James and Hilton--Milnor Splittings for motivic spaces over more general bases.
This is where our generalization pays off: work of Hoyois \cite[Proposition 3.15]{MR3570135} shows that, in particular, Mather's Second Cube Lemma holds in motivic spaces over an \textit{arbitrary} base scheme.
Therefore, \Cref{introthm:James,introthm:Hilton-Milnor} apply in this setting.
We use these splittings to give a description of the motivic space $ \Sigma \Omega \PP^1 $ in terms of wedges of motivic spheres \smash{$ \Sph{i+1,i} $} (\Cref{motivic-james}), and also give a new description of \smash{$ \Omega \Sigma(\PP^1 \smallsetminus \{0,1,\infty\}) $} (\Cref{example:P1threepts}).
Over a perfect field, we give a new decomposition of \smash{$ \Omega \Sigma^2(\PP^1 \smallsetminus \{0,1,\infty\}) $} in terms of motivic spheres of the form \smash{$ \Sph{2m+1,m} $} (\Cref{ex:OmegaSigma2P13pts}).

The second goal of this note is to give a modern construction of the metastable EHP sequence in an $ \infty $-topos $ \dX $.
For every pointed connected object $ X \in \dX_{\ast} $, the James Splitting provides \textit{Hopf maps}
\begin{equation*}
    \Hopf_n \colon \Omega \Sigma X\to \Omega \Sigma (X^{\wedge n}) \period
\end{equation*}
There is also a \textit{James filtration} $ \{\J_{m}(X)\}_{m \geq 0} $ on $ \Omega \Sigma X $, and, moreover, the composite
\begin{equation}\label{seq:Jamesnotfib}
    \begin{tikzcd}[sep=1.5em]
        \J_{n-1}(X) \arrow[r] & \Omega \Sigma X \arrow[r, "\Hopf_n"] & \Omega \Sigma X^{\wedge n}
    \end{tikzcd}
\end{equation} 
is trivial.
The sequence \eqref{seq:Jamesnotfib} is not a fiber sequence in general\footnote{When $ \dX $ is the $ \infty $-category of spaces and $ X $ is a sphere, James and Toda proved that, roughly, the sequence \eqref{seq:Jamesnotfib} becomes a fiber sequence after \textit{$ p $-localization}.
See \cites{MR77922}{MR79263}{MR0143217} for a precise statement.}, but is in the \textit{metastable range}:

\begin{theorem}[metastable EHP sequence; \Cref{metastable}]\label{introthm:metastable}
    Let $ \dX $ be an $ \infty $-topos, $ k \geq 0 $ an integer, and $ X \in
    \dX_{\ast} $ a pointed $ k $-connected object.
    Then for each integer $ n \geq 1 $, the morphism $ \J_{n-1}(X) \to \fib(\Hopf_{n})$ is $ ((n+1)(k+1) - 3)$-connected.
\end{theorem}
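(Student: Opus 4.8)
The plan is to follow the non-computational route: reduce the assertion to a single connectivity estimate for the Blakers--Massey theorem applied to the pushout square that defines the James filtration quotient $\J_n(X)/\J_{n-1}(X)\simeq X^{\smashprod n}$. Throughout I take ``$m$-connected'' to mean ``$\uppi_i=0$ for $i\leq m$'', so that $X^{\smashprod m}$ is $(m(k+1)-1)$-connected, and I use freely that in an $\infty$-topos $m$-connected maps are stable under base change and that the Blakers--Massey and Freudenthal suspension theorems hold. The case $n=1$ is vacuous ($\Hopf_1$ is the identity and $\J_0(X)\simeq\ast$), so I assume $n\geq 2$.

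First I would pass from $\Omega\Sigma X$ to the finite stage $\J_n(X)$. The inclusion $\J_n(X)\hookrightarrow\Omega\Sigma X$ is $((n+1)(k+1)-1)$-connected, since its successive cofibres along the James filtration are the smash powers $X^{\smashprod m}$ with $m>n$. By pasting of pullback squares there is a natural equivalence $\fib(\Hopf_n|_{\J_n(X)})\simeq\J_n(X)\times_{\Omega\Sigma X}\fib(\Hopf_n)$, so the comparison map $\fib(\Hopf_n|_{\J_n(X)})\to\fib(\Hopf_n)$ is a base change of $\J_n(X)\hookrightarrow\Omega\Sigma X$ and hence $((n+1)(k+1)-1)$-connected. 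It therefore suffices to bound the connectivity of $\J_{n-1}(X)\to\fib(\Hopf_n|_{\J_n(X)})$.

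Next I would use that $\Hopf_n$ is built from the James splitting in order to factor its restriction to $\J_n(X)$ as the quotient $\J_n(X)\twoheadrightarrow\J_n(X)/\J_{n-1}(X)\simeq X^{\smashprod n}$ followed by the suspension unit $\eta\colon X^{\smashprod n}\to\Omega\Sigma X^{\smashprod n}$; this compatibility of the James splitting with the James filtration also re-proves triviality of the composite \eqref{seq:Jamesnotfib}. By Freudenthal, $\eta$ is $(2n(k+1)-1)$-connected, so the natural map $\fib\bigl(\J_n(X)\to X^{\smashprod n}\bigr)\to\fib(\Hopf_n|_{\J_n(X)})$, being a base change of the highly connected map $\ast\to\fib(\eta)$, is $(2n(k+1)-1)$-connected; since $2n(k+1)-1\geq(n+1)(k+1)-2$ for $n\geq 2$, this step never constrains the final estimate. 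Finally, the square with corners $\J_{n-1}(X)$, $\J_n(X)$, $\ast$, $X^{\smashprod n}$ is the pushout defining $X^{\smashprod n}\simeq\J_n(X)/\J_{n-1}(X)$; in it $\J_{n-1}(X)\to\J_n(X)$ is $(n(k+1)-1)$-connected (its cofibre is $X^{\smashprod n}$) and $\J_{n-1}(X)\to\ast$ is $(k+1)$-connected (as $\J_{n-1}(X)$ is $k$-connected, being built from the $k$-connected object $X$ by pushouts). Blakers--Massey then gives that $\J_{n-1}(X)\to\fib\bigl(\J_n(X)\to X^{\smashprod n}\bigr)$ is $\bigl((n(k+1)-1)+(k+1)-1\bigr)=((n+1)(k+1)-2)$-connected, and composing the three estimates shows $\J_{n-1}(X)\to\fib(\Hopf_n)$ is $((n+1)(k+1)-2)$-connected, which implies the claimed bound.

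I expect the main obstacle to be the factorization used in the third paragraph: verifying, purely from the axiomatic construction of the James splitting and the James filtration in $\dX$, that $\Hopf_n|_{\J_n(X)}$ genuinely factors through the quotient $X^{\smashprod n}$ via the suspension unit. Concretely this amounts to showing that the stable splitting $\Sigma\Omega\Sigma X\simeq\bigvee_{i\geq 1}\Sigma X^{\smashprod i}$ is filtered compatibly with the James filtration, carrying $\Sigma\J_m(X)$ into $\bigvee_{1\leq i\leq m}\Sigma X^{\smashprod i}$ with top cell the suspended quotient $\Sigma\J_m(X)\to\Sigma X^{\smashprod m}$. Once this compatibility is in hand, everything else is formal manipulation of pullback squares together with the two black boxes Blakers--Massey and Freudenthal, both available in any $\infty$-topos, so no further input particular to the $\infty$-category of spaces is needed.
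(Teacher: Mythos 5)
Your proposal is essentially the paper's internal proof: the same ingredients (the connectivity estimates for $ i_{n-1} \colon \J_{n-1}(X) \to \J_{n}(X) $ and $ u_n \colon \J_n(X) \to \Omega\Sigma X $ as in \Cref{lem:inconnectedness,lem:unconn}, the Freudenthal estimate for the unit $ X^{\smashprod n} \to \Omega\Sigma X^{\smashprod n} $, the compatibility of the splitting with the James filtration established in \Cref{partial-splitting}, and Blakers--Massey applied to the cofiber square of \Cref{Jamescofib}) assembled in the same order, with your two base-change steps playing the role of the paper's single application of \Cref{cor:fibconn} to the square \eqref{eq:squaretakefibsof}, and with your classical indexing of map connectivity sitting one above the paper's, so that your final $ ((n+1)(k+1)-2) $ is precisely the stated $ ((n+1)(k+1)-3) $. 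The only caution is that the connectivities of $ i_{n-1} $ and of $ \J_n(X) \to \Omega\Sigma X $ should be quoted from the direct inductive estimates of \Cref{lem:inconnectedness,lem:unconn} rather than inferred from the connectivity of their cofibers (``highly connected cofiber'' does not imply ``highly connected map'' in general), and your connectivity for the basepoint inclusion into $ \fib(\eta) $ is off by one; as you note, neither point affects the final bound.
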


\noindent We note here that a morphism is $ m $-connected in our terminology if and only if it is $ (m+1) $-connected in the classical terminology (see \Cref{warning:connectedness}).

We provide two proofs of \Cref{introthm:metastable}.
The first proof is new and non-computational; it only makes use of some basic connectedness estimates involving the James filtration and the Blakers--Massey Theorem.
In the second proof we simply note that \Cref{introthm:metastable} for a general $ \infty $-topos follows immediately from the claim for the $ \infty $-topos of spaces.
In the case of spaces, we provide a computational proof; we include this second proof because we were unable to find the computational proof we were familiar with in the literature.


\subsection{Linear overview}

We have written this note with two audiences in mind: the student interested in seeing proofs of \Cref{introthm:James,introthm:Hilton-Milnor,introthm:metastable} in the classical setting of spaces, and the expert homotopy theorist interested in applying these results to more general contexts such as motivic spaces or profinite spaces.
The student can always take $ \dX $ to be the $ \infty $-category of spaces, and the expert can safely skip the background sections provided for the student.
We also note that this text should still be accessible to the reader familiar with homotopy (co)limits but unfamiliar with higher categories, since all we use in our proofs are basic manipulations of homotopy (co)limits.

\Cref{james-section} is dedicated to proving \Cref{introthm:James}.
In \cref{subsec:universalpushout}, we provide background on Mather's Second Cube Lemma and the universality of pushouts.
In \cref{subsec:Jamessplitting}, we provide a proof of the James Splitting. 
Our proof is roughly the same as proofs presented elsewhere \cites{hopkins-course-notes}[\S17.2]{MR2839990}{Wilson:James}, but it seems that the generality of the argument we present here is not very well-known.

\Cref{sec:Hilton-Milnor} provides a quick proof of \Cref{introthm:Hilton-Milnor}.
Again, shadows of the proof we provide appear in the literature \cites{MR281202}[\S 2 \& 3]{MR2428352}[\S17.8]{MR2839990}, but it seems that the generality of the proof has not been completely internalized by the community.
As an application, we use work of Wickelgren \cite[Corollary 3.2]{MR3521594} to give a new description of the motivic space $ \Omega \Sigma(\PP^1 \smallsetminus \{0,1,\infty\}) $ (\Cref{motivic-hilton-milnor}).

\Cref{sec:conncectednessssplit} explains how to use a connectedness argument to prove the infinite James Splitting \eqref{eq:infJames} and Hilton--Milnor Splitting \eqref{eq:infHM} for pointed connected objects of an $ \infty $-topos, and for pointed $ \AA^1 $-connected motivic spaces over a perfect field.
In \cref{subsec:connectedness}, we begin by recalling the basics of connectedness and the Blakers--Massey Theorem in an $ \infty $-topos; this material is also instrumental in our proof of \Cref{introthm:metastable}.
\Cref{subsec:infsplitting} presents the connectedness argument needed to deduce the infinite splittings and defines the Hopf maps appearing in \Cref{introthm:metastable}.
As an application we give a new description of the motivic space \smash{$ \Omega \Sigma^2(\PP^1 \smallsetminus \{0,1,\infty\}) $} over a perfect field (\Cref{ex:OmegaSigma2P13pts}).

\Cref{sec:EHP} is dedicated to proving \Cref{introthm:metastable}.
In \cref{subsec:Jamesconstruction}, we provide the background on the James filtration needed to understand the statement of \Cref{introthm:metastable}, as well as some connectedness estimates we need to prove \Cref{introthm:metastable}.
In \cref{subsec:splitJamesconstr}, we give a refinement of the James Splitting in terms of the James filtration.
In \cref{subsec:EHPproofs}, we first provide a proof of \Cref{introthm:metastable} using the Blakers--Massey Theorem (which we have not seen elsewhere), and then record for posterity what we imagine is the standard computational proof of \Cref{introthm:metastable}.

\begin{acknowledgements}
    We are grateful to Tom Bachmann for pointing out that colimits are universal in motivic spaces.
    We are indebted to André Joyal for noticing some errors in an earlier version as well as alerting us to our misuse of terminology regarding connectedness. 
    We are grateful to Tomer Schlank for explaining why the infinite version of the James Splitting requires $ X $ to be connected, and for explaining some subtleties around \Cref{lem:pr2pushout}.
    We thank Elden Elmanto, Marc Hoyois, and Dylan Wilson for helpful conversations.
    The second-named author gratefully acknowledges support from both the MIT Dean of Science Fellowship and the National Science Foundation Graduate Research Fellowship under Grant \#112237.
\end{acknowledgements}


\subsection{Notation \& background}

In this subsection we set the basic notational conventions that we use throughout this note as well as recall a bit of relevant background.

\begin{notation}
    Let $ \dX $ be an $ \infty $-category.
    If $ \dX $ has a terminal object, we write $ \ast \in \dX $ for the terminal object and $ \dX_{\ast} $ for the $ \infty $-category of pointed objects in $ \dX $.
    If $ \dX_{\ast} $ has coproducts and $ X,Y \in \dX_{\ast} $, we write $ X \wedgesum Y $ for the coproduct of $ X $ and $ Y $ in $ \dX_{\ast} $.
    If $ \dX_{\ast} $ has coproducts and products, note that there is a natural comparison morphism $ X \wedgesum Y \to X \times Y $ induced by the morphisms
    \begin{equation*}
        (\id_{X},\ast) \colon X \to X \times Y \andeq (\ast, \id_{Y}) \colon Y \to X \times Y \period
    \end{equation*}

    We say that a morphism $ f \colon X \to Y $ in $ \dX_{\ast} $ is \defn{null} if $ f $ factors through the zero object $ \ast $. 
\end{notation}

\begin{notation}
    Let $ \dX $ be an $ \infty $-category with pushouts. 
    Given a span $ X \leftarrow W \to Y $ in $ \dX $, we write $ X \coproduct^{W} Y $ for its pushout.
\end{notation}

\begin{recollection}
    Let $ \dX $ be an $ \infty $-category with finite products and pushouts, and $ X,Y \in \dX_{\ast} $ pointed objects of $ \dX $.
    The \defn{smash product} $ X \smashprod Y $ of $ X $ and $ Y $ is the cofiber
    \begin{equation*}
        \begin{tikzcd}[sep=2em]
            X \wedgesum Y \arrow[r] \arrow[d] \arrow[dr, phantom, very near end, "\ulcorner", xshift=0.25em, yshift=-0.25em] & X \times Y \arrow[d] \\
            \ast \arrow[r] & X \smashprod Y
        \end{tikzcd}
    \end{equation*}
    of the comparison morphism $ X \wedgesum Y \to X \times Y $. 
\end{recollection}

\begin{recollection}
    Let $ \dX $ be an $ \infty $-category with pushouts and a terminal object.
    The \defn{suspension} of an object $ X \in \dX $ is the pushout
    \begin{equation*}
        \begin{tikzcd}[sep=2em]
            X \arrow[r] \arrow[d] \arrow[dr, phantom, very near end, "\ulcorner", xshift=0.25em, yshift=-0.25em] & \ast \arrow[d] \\
            \ast \arrow[r] & \Sigma X \period
        \end{tikzcd}
    \end{equation*}
\end{recollection}

\begin{recollection}
    Let $ \dX $ be an $ \infty $-category with finite limits.
    The \defn{loop object} of a pointed object $ X \in \dX_{\ast} $ is the pullback
    \begin{equation*}
        \begin{tikzcd}[sep=2em]
            \Omega X \arrow[r] \arrow[d] \arrow[dr, phantom, very near start, "\lrcorner", xshift=-0.25em, yshift=0.25em] & \ast \arrow[d] \\
            \ast \arrow[r] & X
        \end{tikzcd}
    \end{equation*}
    in $ \dX_{\ast} $.
\end{recollection}

We also make repeated use of the following easy fact.
The unfamiliar reader should consult \cites[\S2]{Chromfracture:BarthelAntolin}{MO:333239}.

\begin{lemma}\label{lem:3by3}
    Let $ \dX $ be an $ \infty $-category with pushouts and 
    \begin{equation}\label{daig:3by3}
        \begin{tikzcd}[sep=2em]
            X_1 & X_0 \arrow[l] \arrow[r] & X_2 \\
            W_1 \arrow[u] \arrow[d] & W_0 \arrow[l] \arrow[r] \arrow[u] \arrow[d] & W_2 \arrow[u] \arrow[d] \\
            Y_1 & Y_0 \arrow[l] \arrow[r] & Y_2
        \end{tikzcd}
    \end{equation}
    a commutative diagram in $ \dX $.
    Then the colimit of the diagram \eqref{daig:3by3} exists and is equivalent to both of the following two iterated pushouts:
    \begin{enumerate}[{\upshape (\ref*{lem:3by3}.1)}]
        \item Form the pushout of the rows of \eqref{daig:3by3}, then take the pushout of the resulting span
        \begin{equation*}
            \begin{tikzcd}[sep=2em]
                \displaystyle X_1 \coproductlimits^{X_0} X_2 & \displaystyle W_1 \coproductlimits^{W_0} W_2 \arrow[l] \arrow[r] & \displaystyle Y_1 \coproductlimits^{Y_0} Y_2 \period
            \end{tikzcd}
        \end{equation*}

        \item Form the pushout of the columns of \eqref{daig:3by3}, then take the pushout of the resulting span
        \begin{equation*}
            \begin{tikzcd}[sep=2em]
                \displaystyle X_1 \coproductlimits^{W_1} Y_1 & \displaystyle X_0 \coproductlimits^{W_0} Y_0 \arrow[l] \arrow[r] & \displaystyle X_2 \coproductlimits^{W_2} Y_2 \period
            \end{tikzcd}
        \end{equation*}
    \end{enumerate}
\end{lemma}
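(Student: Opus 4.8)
The plan is to recognize the commutative diagram \eqref{daig:3by3} as a functor out of a product of two copies of the span category $ \cK = (\bullet \leftarrow \bullet \to \bullet) \cong \Lambda^2_0 $, and then to invoke the fact that a colimit indexed by a product $ \cK \times \cK $ may be evaluated one variable at a time. Concretely, let $ \cK_{\mathrm{r}} $ be the copy of $ \cK $ indexing the three \emph{rows} of \eqref{daig:3by3}, with the middle row $ W_1 \leftarrow W_0 \to W_2 $ placed at the cone point of the span (since it maps to both the $ X $-row and the $ Y $-row), and let $ \cK_{\mathrm{c}} $ be the copy of $ \cK $ indexing the three \emph{columns}, with the middle column $ X_0 \leftarrow W_0 \to Y_0 $ at the cone point. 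Then a commutative diagram of the shape \eqref{daig:3by3} is exactly a functor $ F \colon \cK_{\mathrm{c}} \times \cK_{\mathrm{r}} \to \dX $.

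First I would use that, since $ \dX $ has pushouts, it has all $ \cK $-indexed colimits, and these are computed by pushouts. Fixing the row variable and taking the colimit in the column variable produces, row by row, the objects $ X_1 \coproductlimits^{X_0} X_2 $, $ W_1 \coproductlimits^{W_0} W_2 $, and $ Y_1 \coproductlimits^{Y_0} Y_2 $; by functoriality of the colimit (pointwise left Kan extension along the projection $ \cK_{\mathrm{c}} \times \cK_{\mathrm{r}} \to \cK_{\mathrm{r}} $) these assemble into a span indexed by $ \cK_{\mathrm{r}} $, whose colimit is again a pushout and hence exists. By the transitivity of colimits along the product $ \cK_{\mathrm{c}} \times \cK_{\mathrm{r}} $ (the Fubini theorem for colimits), this last pushout computes $ \colim\eqref{daig:3by3} $; in particular the latter exists, and it is the first of the two iterated pushouts in the statement. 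Interchanging the roles of $ \cK_{\mathrm{c}} $ and $ \cK_{\mathrm{r}} $ — that is, first forming the pushouts of the columns $ X_1 \coproductlimits^{W_1} Y_1 $, $ X_0 \coproductlimits^{W_0} Y_0 $, $ X_2 \coproductlimits^{W_2} Y_2 $ and then the pushout of the resulting span — yields the second iterated pushout, which by the same principle also computes $ \colim\eqref{daig:3by3} $. Comparing the two descriptions gives the lemma.

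I do not expect a genuine obstacle here: the only nontrivial input is the transitivity of colimits over a product of $ \infty $-categories, which is standard and is discussed in the references cited just above the statement. For a reader who prefers to stay closer to the ground, the same conclusion can be reached by hand: adjoin to \eqref{daig:3by3} its three row-pushouts as a fourth column, its three column-pushouts as a fourth row, and the total pushout in the far corner, and then repeatedly apply the pasting law for pushout squares (a rectangle built from two squares, one of which is a pushout, is a pushout if and only if the other is) to identify that corner with each of the two iterated pushouts. The one point demanding care is the bookkeeping of which copy of $ \cK $ plays which role, so that the cone points — the middle row $ W_{\bullet} $ in one direction and the middle column $ \bullet_0 $ in the other — match the cone points of the spans appearing in the two formulas.
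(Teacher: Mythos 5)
Your proposal is correct, and it is essentially the argument the paper has in mind: the paper does not prove \Cref{lem:3by3} inline but defers to its cited references, which establish it exactly by viewing the $3\times 3$ diagram as a functor out of a product of two span categories and computing the colimit one variable at a time (pointwise left Kan extension along the two projections plus transitivity of colimits). Your identification of the cone points (the middle row in one direction, the middle column in the other) and your observation that pushouts suffice to form the pointwise Kan extensions are precisely the points that make this standard Fubini argument go through.
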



\section{The James Splitting}\label{james-section}

In this section, we present a proof of the James Splitting which holds in any $\infty$-category with finite limits and pushouts, where pushout squares remain pushouts after basechange along an arbitrary morphism.
The argument we give roughly follows the argument Hopkins gave in his course on stable homotopy theory in the setting of spaces \cite[Lecture 4, \S 3]{hopkins-course-notes}; Hopkins attributes this proof to James \cites{MR73181}{MR77922}{MR79263} and Ganea \cite{MR229239}.


\subsection{Universal pushouts and Mather's Second Cube Lemma}\label{subsec:universalpushout}

The key property utilized in the proofs we present of the James and Hilton--Milnor Splittings is that pushout squares are preserved by arbitrary basechange. 
This implies that, in particular, the James and Hilton--Milnor Splittings
hold in any $ \infty $-topos, but also in other situations (such as motivic
spaces).
In this subsection, we provide the categorical context that we work in for the rest of the paper and give a convenient reformulation of the stability of pullbacks under basechange in terms of Mather's Second Cube Lemma (\Cref{Mather}).

\begin{recollection}
    Let $ \dI $ be an $ \infty $-category and let $ \dX $ be an $ \infty $-category with pullbacks and all $ \dI $-shaped colimits.
    We say that \defn{$ \dI $-shaped colimits in $ \dX $ are universal} if $ \dI $-shaped colimits in $ \dX $ are stable under pullback along any morphism. 
    That is, for every diagram $ F \colon \dI \to \dX $ and pair of morphisms $ \colim_{i \in \dI} F(i) \to Z $ and $ Y \to Z $ in $ \dX $, the natural morphism
    \begin{equation*}
        \colim_{i \in \dI} (F(i) \times_Z Y) \to \Big(\colim_{i \in \dI} F(i)\Big) \times_Z Y
    \end{equation*}
    is an equivalence.
\end{recollection}

\begin{example}
    Let $0\leq n \leq \infty$, and let $ \dX $ be an $ n $-topos.
    One of the Giraud--Lurie axioms for $ n $-topoi guarentees that all small colimits in $ \dX $ are universal \cite[Theorem 6.1.0.6 \& Proposition 6.4.1.5]{HTT}.
    In particular, small colimits in the category $ \Set $ of sets and the $ \infty $-category $ \Spc $ of spaces are universal.
\end{example}

\begin{example}
    Let $ \dX $ be an \textit{elementary $ \infty $-topos} in the sense of \cite[Definition 3.5]{Rasekh:elementary}.
    Then all colimits that exist in $ \dX $ are universal \cite[Corollary 3.12]{Rasekh:elementary}.
    In particular, finite colimits are universal in $ \dX $.
\end{example}

\begin{example}[motivic spaces]\label{ex:motivicspaces}
    Let $ S $ be a scheme.
    The $ \infty $-category $\HH(S)$ of \defn{motivic spaces over $ S $} is defined as the $ \AA^1 $-localization of the $\infty$-topos $ \Shnis(\Sm_S) $ of sheaves of spaces on the category $\Sm_S$ of smooth schemes of finite type over $S$ equipped with the Nisnevich topology.
    Concretely, $ \HH(S) $ is the full subcategory of $ \Shnis(\Sm_S) $ spanned by those Nisnevich sheaves $ \dF $ on $ \Sm_S $ with the property that for each smooth $ S $-scheme $ X $, the projection $ \pr_1 \colon X \times_S \AA_S^1 \to X $ induces an equivalence
    \begin{equation*}
       \pr_1^{\ast} \colon \dF(X) \isomto \dF(X \times_S \AA_S^1) \period
    \end{equation*}

    The inclusion $ \HH(S) \subset \Shnis(\Sm_S) $ admits a left adjoint $ \Lmot \colon \Shnis(\Sm_S) \to \HH(S) $ called \textit{motivic localization}.
    Motivic localization preserves finite products, but not all finite limits.
    Moreover, the $ \infty $-category $ \HH(S) $ is not an $ \infty $-topos (see \cites[Remark 3.5]{MR2916287}[\S4.3]{MR3890956}), and it is not immediately clear from the construction if any colimits are universal in $ \HH(S) $.
    Nonetheless, Hoyois has shown that all small colimits are universal in $ \HH(S) $ \cite[Proposition 3.15]{MR3570135}.
\end{example}

\begin{example}[profinite spaces]\label{ex:profinitespaces}
    We say that a space $ X $ is \defn{$ \uppi $-finite} if $ X $ is truncated, has finitely many connected components, and $ \uppi_i(X,x) $ is finite for every integer $ i \geq 1 $ and point $ x \in X $. 
    Write $ \Spc_{\uppi} \subset \Spc $ for the full subcategory spanned by the $ \uppi $-finite spaces and $ \Pro(\Spc_{\uppi}) $ for the $ \infty $-category of \defn{profinite spaces}.
    Infinite coproducts in $ \Pro(\Spc_{\uppi}) $ are not universal \SAG{Warning}{E.6.0.9}, however, finite colimits and geometric realizations of simplicial objects are universal in $ \Pro(\Spc_{\uppi}) $ \cite[\href{http://www.math.ias.edu/~lurie/papers/SAG-rootfile.pdf\#theorem.E.6.3.1}{Theorem E.6.3.1} \& \href{http://www.math.ias.edu/~lurie/papers/SAG-rootfile.pdf\#theorem.E.6.3.2}{Corollary E.6.3.2}]{SAG}.
\end{example}

The following result gives a reformulation of what it means for pushouts to be universal in terms of Mather's Second Cube Lemma, which Mather originally proved in the $ \infty $-category of spaces \cite[Theorem 25]{MR402694}.
See \kerodon{011H} for an elegant proof of Mather's Second Cube Lemma in $ \Spc $.

\begin{lemma}\label{Mather}
    Let $ \dX $ be an $ \infty $-category with pullbacks and pushouts.
    The following conditions are equivalent:
    \begin{enumerate}[{\upshape (\ref*{Mather}.1)}]
        \item\label{Mather.1} Pushouts in $ \dX $ are universal.

	    \item\label{Mather.2} \emph{Mather's Second Cube Lemma} holds in $\dX$:
	    Given a commutative cube 
        \begin{equation*}\label{eq:Mathercube}
            \begin{tikzcd}[column sep={6ex,between origins}, row sep={6ex,between origins}]
                A_0 \arrow[rr] \arrow[dd]  \arrow[dr] & & A_2 \arrow[dd]  \arrow[dr] \\
                & A_1 \arrow[rr, crossing over] & & A_3 \arrow[dd]  \\
                B_0 \arrow[rr] \arrow[dr] & & B_2 \arrow[dr] \\
                & B_1 \arrow[rr] \arrow[from=uu, crossing over] & & B_3  & 
            \end{tikzcd}
        \end{equation*}
        in $ \dX $ where the bottom horizontal face is a pushout square and all vertical faces are pullback squares, then the top horizontal square is a pushout square.
    \end{enumerate}
\end{lemma}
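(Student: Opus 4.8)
The plan is to prove the two implications of the equivalence separately. In both directions the only substantive input is the pasting lemma for pullback squares, which lets one rewrite the objects of the cube as iterated pullbacks over the common vertex $B_3$; the rest is simply translating between the phrase ``pushout square'' and the phrase ``the pullback of this pushout is again a pushout''.

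\emph{From universality of pushouts to Mather's Second Cube Lemma.} Assume pushouts in $\dX$ are universal and take a cube as in the statement whose bottom face is a pushout and whose four vertical faces are pullbacks. First I would use the two vertical faces containing the edge $A_3 \to B_3$ to identify $A_1 \simeq A_3 \times_{B_3} B_1$ and $A_2 \simeq A_3 \times_{B_3} B_2$, and then a third vertical face together with the pasting lemma to get
\begin{equation*}
    A_0 \simeq A_1 \times_{B_1} B_0 \simeq (A_3 \times_{B_3} B_1) \times_{B_1} B_0 \simeq A_3 \times_{B_3} B_0 \period
\end{equation*}
Since the bottom face exhibits $B_3 \simeq B_1 \coproductlimits^{B_0} B_2$, applying universality of pushouts along the morphism $A_3 \to B_3$ gives
\begin{equation*}
    A_3 \simeq A_3 \times_{B_3} B_3 \simeq (A_3 \times_{B_3} B_1) \coproductlimits^{A_3 \times_{B_3} B_0} (A_3 \times_{B_3} B_2) \simeq A_1 \coproductlimits^{A_0} A_2 \comma
\end{equation*}
and commutativity of the cube guarantees these equivalences are compatible with the structure maps of the top face, so the top face is a pushout square.

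\emph{From Mather's Second Cube Lemma to universality of pushouts.} Conversely, assume the Cube Lemma holds, fix a pushout square with vertices $B_0, B_1, B_2, B_3 \simeq B_1 \coproductlimits^{B_0} B_2$, and fix an arbitrary morphism $A_3 \to B_3$; the goal is to show the square with vertices $A_i \colonequals A_3 \times_{B_3} B_i$ (for $i = 0,1,2$) and $A_3$ is again a pushout. I would assemble the cube whose bottom face is the given pushout, whose vertical maps are the projections $A_i \to B_i$ together with the given $A_3 \to B_3$, and whose top face has structure maps induced by the universal property of the pullbacks. The two vertical faces containing $A_3 \to B_3$ are pullbacks by construction, and the other two are pullbacks by the pasting lemma, e.g.\ $A_1 \times_{B_1} B_0 \simeq (A_3 \times_{B_3} B_1) \times_{B_1} B_0 \simeq A_3 \times_{B_3} B_0 = A_0$. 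Mather's Second Cube Lemma then asserts that the top face is a pushout, which is exactly what ``pushouts are universal'' means.

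\emph{The main obstacle.} There is essentially no mathematical difficulty here; the work is entirely $\infty$-categorical bookkeeping. In the second implication one must build the full commutative cube \emph{coherently} out of the data of a span, a morphism out of its pushout, and a choice of pullbacks; and in the first implication one must check that the identifications of $A_0, A_1, A_2$ as iterated pullbacks are compatible with the cube's structure maps rather than merely abstractly equivalent to the right objects. The clean way to handle this is to observe that ``pull the entire bottom span back along $A_3 \to B_3$'' is a single functorial operation, carried out in an appropriate functor $\infty$-category where limits and colimits are computed pointwise, producing the cube together with all of the required pullback and pushout faces at once.
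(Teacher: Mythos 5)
Your proof is correct and takes essentially the same route as the paper: the forward direction is the (essentially immediate) observation that pullback pasting exhibits the top face as the basechange of the bottom pushout along $A_3 \to B_3$, and the converse assembles the cube of pullbacks over the given pushout and invokes the Cube Lemma. The only cosmetic difference is that the paper phrases universality via an auxiliary cospan $B_3 \to Z \leftarrow Y$ and glues one extra pullback square onto the cube, which your formulation (basechange along a map to the pushout itself) recovers by the same pasting-lemma argument.
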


\begin{proof}
    The implication \enumref{Mather}{1} $ \Rightarrow $ \enumref{Mather}{2} is immediate.
    To see that \enumref{Mather}{2} $ \Rightarrow $ \enumref{Mather}{1}, suppose that we are given a pushout square
    \begin{equation}\label{sq:Bpullback}
        \begin{tikzcd}[sep=2em]
            B_0 \arrow[r] \arrow[d] \arrow[dr, phantom, very near end, "\ulcorner", xshift=0.25em, yshift=-0.25em] & B_2 \arrow[d] \\
            B_1 \arrow[r] & B_3
        \end{tikzcd}
    \end{equation} 
    in $ \dX $ and morphisms $ f \colon B_3 \to Z $ and $ g \colon Y \to Z $ in $ \dX $.
    For each $ i \in \{0,1,2,3\} $, define $ A_i \colonequals B_i \times_Z Y $, so that all the vertical squares in the diagram 
    \begin{equation}\label{diag:mathercubeuniversal}
        \begin{tikzcd}[column sep={6ex,between origins}, row sep={6ex,between origins}]
            A_0 \arrow[rr] \arrow[dd]  \arrow[dr] & & A_2 \arrow[dd]  \arrow[dr] \\
            & A_1 \arrow[rr, crossing over] & & A_3 \arrow[dd] \arrow[rr] & & Y \arrow[dd, "g"]  \\
            B_0 \arrow[rr] \arrow[dr] & & B_2 \arrow[dr] \\
            & B_1 \arrow[rr] \arrow[from=uu, crossing over] & & B_3 \arrow[rr, "f"'] & & Z 
        \end{tikzcd}
    \end{equation}
    are pullbacks.
    Since the bottom horizontal square of the cube in \eqref{diag:mathercubeuniversal} is a pushout, \enumref{Mather}{2} implies that the top horizontal square is also a pushout.
    Thus the pushout square \eqref{sq:Bpullback} remains a pushout after base change along an arbitrary morphism, as desired.
\end{proof}

Since the main results of this note are about pointed objects, we make the following mildly abusive convention:

\begin{convention}
    We say that an $ \infty $-category $ \dX $ \defn{has universal pushouts} if $ \dX $ has finite limits and pushouts, and pushouts in $ \dX $ are universal.
\end{convention}


\subsection{Statement of the James Splitting \& Consequences}\label{subsec:Jamessplitting}

The James Splitting, originally proven in \cite{MR73181}, provides a splitting of the space $ \Omega \Sigma X $ after a single suspension.
The goal of this subsection is to provide a proof of the James Splitting that only relies on the universality of pushouts and a few elementary computations involving the interaction between forming suspensions, loop objects, and smash products.

\begin{theorem}[Fundamental James Splitting]\label{james}
    Let $\dX$ be an $\infty$-category with universal pushouts. 
    For every pointed object $X\in \dX_{\ast}$, there is a natural equivalence
    \begin{equation*}
	   \Sigma \Omega \Sigma X \simeq \Sigma X \vee \Sigma(X\wedge \Omega \Sigma X) \period
    \end{equation*}
\end{theorem}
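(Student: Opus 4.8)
The plan is to reduce the statement, via the universality of pushouts, to a purely formal manipulation of finite colimits: the only input particular to $\Sigma X$ is one pushout square coming from the path--loop fibration over $\Sigma X$, and everything after that is two applications of the $3\times3$-lemma (\Cref{lem:3by3}). First I would extract the key square. By definition $\Sigma X$ is the pushout of $\ast\leftarrow X\to\ast$, and since $X$ is pointed this span carries a common section $\ast\to X$ of its two legs. Base-changing this pushout square, together with the section, along the basepoint $\ast\to\Sigma X$, and invoking the hypothesis that pushouts in $\dX$ are universal (equivalently, that \Cref{Mather} holds), produces a pushout square
\[
\begin{CD}
X\times\Omega\Sigma X @>{p_1}>> \Omega\Sigma X\\
@V{p_0}VV @VVV\\
\Omega\Sigma X @>>> \ast
\end{CD}
\]
where I use $\ast\times_{\Sigma X}\ast\simeq\Omega\Sigma X$ and, since $X\to\Sigma X$ factors through $\ast$, $X\times_{\Sigma X}\ast\simeq X\times\Omega\Sigma X$. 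The base change of the section yields a morphism $s\colon\Omega\Sigma X\simeq\ast_X\times\Omega\Sigma X\hookrightarrow X\times\Omega\Sigma X$ that is a common section of \emph{both} $p_0$ and $p_1$; this is the only structure from this step the rest of the argument uses. (Concretely $p_0$ is a projection and $p_1$ the James action, but their precise descriptions are irrelevant.)

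Next I would feed this square into \Cref{lem:3by3} to peel off a single suspension. Apply \Cref{lem:3by3} to the commutative $3\times3$-diagram whose top row is the span $\Omega\Sigma X\xleftarrow{p_0}X\times\Omega\Sigma X\xrightarrow{p_1}\Omega\Sigma X$, whose middle row is the constant span on $\Omega\Sigma X$ mapping up to the top row via $(\id, s, \id)$, and whose bottom row is $\ast\leftarrow\Omega\Sigma X\xrightarrow{\id}\Omega\Sigma X$, receiving the middle row via the evident morphisms. Computing the total colimit by rows: the top row has colimit $\ast$ by the square above, and the bottom row has colimit $\ast$ (pushout along an equivalence), so the total colimit is $\colim(\ast\leftarrow\Omega\Sigma X\to\ast)=\Sigma\Omega\Sigma X$. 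Computing it by columns: two of the column-spans have a leg that is an equivalence, so their colimits are $\ast$ and $\Omega\Sigma X$, while the middle column has colimit $X\times\Omega\Sigma X$; the resulting span is $\ast\leftarrow X\times\Omega\Sigma X\xrightarrow{p_1}\Omega\Sigma X$, with colimit $\cofib(p_1)$. Hence $\Sigma\Omega\Sigma X\simeq\cofib(p_1\colon X\times\Omega\Sigma X\to\Omega\Sigma X)$, naturally in $X$.

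Finally I would identify this cofiber. Since $p_1$ has the section $s$, consider the morphism of cofiber sequences with components $\id$, $p_1$, and the induced $\cofib(s)\to\ast$, from $(\Omega\Sigma X\xrightarrow{s}X\times\Omega\Sigma X\to\cofib(s))$ to $(\Omega\Sigma X\xrightarrow{\id}\Omega\Sigma X\to\ast)$; a standard $3\times3$-lemma argument (again a consequence of \Cref{lem:3by3}) shows that the cofibers of its three vertical morphisms form a cofiber sequence $\ast\to\cofib(p_1)\to\Sigma\cofib(s)$, whence $\cofib(p_1)\simeq\Sigma\cofib(s)$. Now $\cofib(s)=(X\times\Omega\Sigma X)/(\ast_X\times\Omega\Sigma X)\simeq(\Omega\Sigma X)_{+}\wedge X\simeq X\vee(X\wedge\Omega\Sigma X)$, and applying $\Sigma$ (which preserves coproducts) gives $\Sigma\Omega\Sigma X\simeq\Sigma X\vee\Sigma(X\wedge\Omega\Sigma X)$, with naturality inherited from each step.

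I expect the first step to be the main obstacle: one has to be careful about exactly which morphism into $\Sigma X$ one base-changes along, about the identification $X\times_{\Sigma X}\ast\simeq X\times\Omega\Sigma X$, and, above all, about producing a genuine common section of \emph{both} legs of the resulting square from the section $\ast\to X$ — it is precisely this common section that makes the two colimit computations in Steps 2 and 3 go through. Once that square and section are in hand, the remaining two steps are routine bookkeeping with \Cref{lem:3by3}.
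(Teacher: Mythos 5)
Your first two steps are essentially the paper's own argument and are correct: base-changing the defining pushout of $\Sigma X$ along the basepoint via Mather's cube is exactly \Cref{lem:pr2pushout}, and your $3\times3$ computation showing $\Sigma\Omega\Sigma X\simeq\cofib(p_1)$ is \Cref{cor:cofibpr2} (the paper gets it by pasting pushout squares; your version with the common section $s$ is equivalent bookkeeping, and the section does exist for the reasons you give). The gap is in the last step, in the chain $\cofib(s)\simeq(\Omega\Sigma X)_+\wedge X\simeq X\vee(X\wedge\Omega\Sigma X)$. The unsuspended half-smash splitting $X\smashprod Y_+\simeq X\vee(X\smashprod Y)$ is not true in general, and nothing about $Y=\Omega\Sigma X$ rescues it: already in pointed spaces, $Y_+\not\simeq \upS^0\vee Y$ when $Y$ is connected and noncontractible, and, in the form you use it, taking $X=\CP^2$ and $Y=\upS^1$ gives $\cofib(s)=(\CP^2\times \upS^1)/(\ast\times \upS^1)$, whose reduced cohomology has a nonzero cup product of a degree-$2$ class with a degree-$3$ class, whereas in $\CP^2\vee\Sigma\CP^2$ all such products vanish; so $\cofib(s)\not\simeq X\vee(X\smashprod Y)$. (A smaller issue: $(\Omega\Sigma X)_+$ is not even defined in the stated generality, since $\dX$ is not assumed to have an initial object.) This splitting is precisely the kind of statement that only becomes available after one suspension, which is the whole point of the theorem.

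What you actually need is only $\Sigma\cofib(s)\simeq\Sigma X\vee\Sigma(X\smashprod\Omega\Sigma X)$, which is true but does not follow by ``applying $\Sigma$'' to the false unsuspended claim; it needs its own argument. Two repairs are available. (i) Do what the paper does: your $3\times3$ diagram with the roles of the two outer columns swapped identifies $\Sigma\Omega\Sigma X$ with the cofiber of the projection leg $X\times\Omega\Sigma X\to\Omega\Sigma X$, and that cofiber is computed by \Cref{pushout} and \Cref{cofiber-product} (another application of \Cref{lem:3by3}, to the diagram with rows $\ast\leftarrow\ast\to\ast$, $X\leftarrow X\wedgesum Y\to Y$, $X\leftarrow X\times Y\to Y$), giving $\cofib(\pr_2)\simeq\Sigma X\vee\Sigma(X\smashprod Y)$ directly. (ii) Keep your route: from the map of cofiber sequences you do get a cofiber sequence $X\to\cofib(s)\to X\smashprod\Omega\Sigma X$ together with a retraction $\cofib(s)\to X$ induced by $\pr_1$ (which kills $s$); now suspend and split using the cogroup structure on suspensions, i.e.\ the duals of \Cref{loop-monoid} and \Cref{lem:split}, exactly as in the proof of \Cref{partial-splitting}. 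With either repair the remainder of your argument goes through.
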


\noindent Using the fact that $ \Sigma(X \smashprod \Omega\Sigma X) \equivalent X \smashprod \Sigma\Omega\Sigma X $ (\Cref{lem:suspensionsmash}) and iterating the equivalence of \Cref{james}, we see:

\begin{corollary}[Fundamental James Splitting, redux]\label{jamesredux}
    Let $\dX$ be an $\infty$-category with universal pushouts. 
    For each pointed object $X\in \dX_{\ast}$ and integer $ n \geq 1 $, there is a natural equivalence
    \begin{equation*}
        \Sigma \Omega \Sigma X \simeq \paren{\bigvee_{1 \leq i \leq n} \Sigma X^{\wedge i}} \vee (X^{\smashprod n} \wedge \Sigma\Omega \Sigma X) \period
    \end{equation*}
\end{corollary}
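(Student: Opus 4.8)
The plan is to obtain \Cref{jamesredux} from \Cref{james} by induction on $n$, using only formal properties of the smash product. The base case $n = 1$ is immediate: \Cref{james} supplies a natural equivalence $\Sigma\Omega\Sigma X \simeq \Sigma X \vee \Sigma(X \wedge \Omega\Sigma X)$, and rewriting the second summand by \Cref{lem:suspensionsmash} gives a natural equivalence
\begin{equation*}
    \Sigma\Omega\Sigma X \;\simeq\; \Sigma X \vee (X \wedge \Sigma\Omega\Sigma X) \tag{$\star$}
\end{equation*}
which is exactly the $n = 1$ instance of the claim.

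For the inductive step, suppose the asserted equivalence holds for some $n \geq 1$. Smashing $(\star)$ with $X^{\wedge n}$ and simplifying the right-hand side using (i) that smashing with a fixed object distributes over wedge sums, (ii) associativity of the smash product, so that $X^{\wedge n} \wedge X \wedge \Sigma\Omega\Sigma X \simeq X^{\wedge (n+1)} \wedge \Sigma\Omega\Sigma X$, and (iii) the identity $X^{\wedge n} \wedge \Sigma X \simeq \Sigma X^{\wedge(n+1)}$ --- a special case of the natural equivalence $A \wedge \Sigma B \simeq \Sigma(A \wedge B)$ underlying \Cref{lem:suspensionsmash} --- one gets a natural equivalence
\begin{equation*}
    X^{\wedge n} \wedge \Sigma\Omega\Sigma X \;\simeq\; \Sigma X^{\wedge(n+1)} \vee \bigl(X^{\wedge(n+1)} \wedge \Sigma\Omega\Sigma X\bigr) \period
\end{equation*}
Feeding this into the right-hand side of the $n$-th equivalence produces the $(n+1)$-st one. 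Since every equivalence used is natural in $X$, the composite is natural, and the induction is complete.

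The only step that is not purely formal is ingredient (i), i.e.\ the distributivity $Z \wedge (A \vee B) \simeq (Z \wedge A) \vee (Z \wedge B)$ in $\dX_{\ast}$ (together with $Z \wedge \ast \simeq \ast$, which identifies the relevant cofibers). Unwinding the definition of the smash product as a cofiber, this comes down to the assertion that for a fixed object $Z$ the functor $Z \times (-)$ preserves pushouts; as $Z \times (-)$ is base change along $Z \to \ast$, this is an instance of the universality of pushouts in $\dX$ (\Cref{Mather}). I do not anticipate a genuine obstacle anywhere: all of the mathematical content of \Cref{jamesredux} is already in \Cref{james}, and the rest is bookkeeping.
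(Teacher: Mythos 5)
Your proof is correct and is essentially the paper's own argument: the paper deduces \Cref{jamesredux} by iterating the equivalence of \Cref{james} together with \Cref{lem:suspensionsmash}, which is exactly your induction, with the distributivity of $X^{\wedge n}\wedge(-)$ over wedge sums (justified, as you say, by universality of pushouts, in the same way \Cref{lem:suspensionsmash} is proved) spelled out rather than left implicit. No gaps; your write-up just makes the bookkeeping explicit.
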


\begin{notation}\label{ntn:Jamescomparison}
    Let $\dX$ be an $\infty$-category with universal pushouts and $X\in \dX_{\ast}$ a pointed object.
    Assume that $ \dX_{\ast} $ has countable coproducts.
    Passing to the colimit as $ n \to \infty $, the coproduct insertions
    \begin{equation*}
        \bigvee_{i = 1}^{n} \Sigma X^{\wedge i} \to \Sigma \Omega \Sigma X
    \end{equation*}
    provided by \Cref{jamesredux} provide a natural comparison morphism
    \begin{equation*}
        \bigvee_{i = 1}^{\infty} \Sigma X^{\wedge i} \to \Sigma \Omega \Sigma X \comma
    \end{equation*}
    which we denote by $ c_X \colon \bigvee_{i = 1}^{\infty} \Sigma X^{\wedge i} \to \Sigma \Omega \Sigma X  $.
\end{notation}

\begin{warning}
    The comparison morphism $ c_X $ need not be an equivalence.
    For example, if $ \dX = \Spc $ and $ X = \Sph{0} $ is the $ 0 $-sphere, then the map
    \begin{equation*}
        c_{\Sph{0}} \colon \bigvee_{i = 1}^{\infty} \Sph{1} \to \Sigma \Omega \Sigma \Sph{0} \equivalent \bigvee_{i \in \ZZ} \Sph{1} 
    \end{equation*}
    is not an equivalence.
    Even though both the source and target of \smash{$ c_{\Sph{0}} $} are both countable wedges of copies of $ \Sph{1} $, the map $ c_{\Sph{0}} $ is the summand inclusion induced by the inclusion $ \ZZ_{\geq 1} \subset \ZZ $.

    We analyze when the comparison morphism $ c_X $ is an equivalence in \cref{subsec:infsplitting}.
\end{warning}

\begin{example}\label{motivic-james}
    Let $ S $ be a scheme.
    Since colimits are universal in the $ \infty $-category $\HH(S)$ of motivic spaces over $ S $ (\Cref{ex:motivicspaces}), \Cref{james} implies that for any pointed motivic space $ X \in \HH(S)_{\ast} $ and integer $ n \geq 1 $, we have $ \Sph{1} $-James Splittings
    \begin{equation*}
        \Sigma \Omega \Sigma X \simeq \paren{\bigvee_{1 \leq i \leq n} \Sigma X^{\wedge i}} \vee (X^{\smashprod n} \wedge \Sigma\Omega \Sigma X) \period
    \end{equation*}

    Write $ \GGm $ for the multiplicative group scheme over $ S $.
    Since $ \Sigma\GGm \equivalent \PP_S^1 $, setting $ X = \GGm $ we see that 
    \begin{equation}\label{eq:P1James}
        \Sigma \Omega \PP_S^1 \simeq \paren{\bigvee_{1 \leq i \leq n} \Sigma \GGm^{\wedge i}} \vee (\GGm^{\smashprod n} \wedge \Sigma\Omega\PP_S^1) \period
    \end{equation}
    Using the grading convention $ \Sph{a,b} \colonequals \GGm^{\smashprod b} \smashprod (\Sph{1})^{\smashprod(a-b)} $ for motivic spheres, we can rewrite the equivalence \eqref{eq:P1James} as
    \begin{equation*}
        \Sigma \Omega \PP_S^1 \simeq \paren{\bigvee_{1 \leq i \leq n} \Sph{i+1,i}} \vee (\Sph{n+1,n} \wedge \Omega\PP_S^1) \period
    \end{equation*}
\end{example}

\begin{remark}
    There is another suspension in motivic homotopy theory, given by smashing with the multiplicative group scheme \smash{$ \GGm $}.
    One would like an analogue of the James Splitting in $\HH(S)_{\ast}$ for \smash{$ \GGm $}-suspensions.
    For $ S = \spec(\RR)$, Betti realization defines a functor \smash{$ \HH(\spec(\RR))\to \Spc_{\upC_2} $} to $\upC_2$-spaces which sends \smash{$\GGm$} to the sign representation circle \smash{$ \Sph{\upsigma}$} and $ \Sph{1} $ to the circle with trivial $ \upC_2 $-action.
    Even though Betti realization is not an equivalence, it closely ties $\RR$-motivic
    homotopy theory with $\upC_2$-equivariant homotopy theory.
    In \cite{hill-james}, Hill studies the \textit{signed James construction} in $\upC_2$-equivariant unstable homotopy theory, and shows that an analogue of the James Splitting holds for $\Omega^{\upsigma} \Sigma^{\upsigma} X$ after suspending by the regular representation sphere \smash{$ \Sph{\uprho} = \Sph{1} \wedge \Sph{\upsigma} $}.
    This might lead one to hope that there is an analogue of Hill's result in motivic homotopy theory which proves the James Splitting for \smash{$\Omega_{\GGm} \Sigma_{\GGm} X$} after $\PP^1$-suspension; at the moment, we are not aware of such a result.
\end{remark}


\subsection{Proof of the James Splitting}\label{subsec:Jamessplittingproof}

Before we prove \Cref{james}, we need a few preliminary results.
First, we give a convenient expression for $ \Sigma\Omega\Sigma X $ as the cofiber of the projection $ \pr_2 \colon X \times \Omega\Sigma X \to \Omega\Sigma X $.
This expression for $ \Sigma\Omega\Sigma X $ is an immediate consequence of the following:

\begin{lemma}\label{lem:pr2pushout}
    Let $\dX$ be an $\infty$-category with universal pushouts.
    For every pointed object $ X \in \dX_{\ast} $, there exists a natural morphism $ a_X \colon X \times \Omega\Sigma X \to \Omega\Sigma X $ and a pushout square 
    \begin{equation*}
        \begin{tikzcd}[sep=2em]
            X \times \Omega\Sigma X \arrow[r, "\pr_2"] \arrow[d, "a_X"'] \arrow[dr, phantom, very near end, "\ulcorner", xshift=0.45em, yshift=-0.15em] & \Omega\Sigma X \arrow[d] \\
            \Omega\Sigma X \arrow[r] & \ast \period
        \end{tikzcd}
    \end{equation*}
\end{lemma}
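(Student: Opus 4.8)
The plan is to exhibit $\Sigma\Omega\Sigma X$ as a pushout of a $3\times 3$ diagram and then use universality of pushouts to rewrite that iterated pushout so that the span $\Omega\Sigma X \xleftarrow{\pr_2} X\times\Omega\Sigma X \xrightarrow{a_X} \Omega\Sigma X$ appears. Concretely, I would start from the defining pushout square for $\Sigma X$, namely $\ast \leftarrow X \rightarrow \ast$ with pushout $\Sigma X$, together with the canonical adjunction unit $X \to \Omega\Sigma X$. The key move is to base change the entire suspension pushout square along the basepoint morphism $\ast \to \Sigma X$: by universality of pushouts (\Cref{Mather}), pulling back the pushout square $\ast \leftarrow X \rightarrow \ast \Rightarrow \Sigma X$ along $\ast \to \Sigma X$ still gives a pushout square, whose terms are $\Omega\Sigma X \leftarrow X\times_{\Sigma X}\ast \rightarrow \Omega\Sigma X$ with pushout $\Omega\Sigma X \times_{\Sigma X} \ast$. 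But the basepoint $\ast \to \Sigma X$ pulls back over itself to $\Omega\Sigma X$, and the map $X \to \Sigma X$ (one of the legs) pulls back to a map whose total space I want to identify with $X\times\Omega\Sigma X$.

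The technical heart is therefore the identification $X \times_{\Sigma X} \ast \simeq X \times \Omega\Sigma X$, compatibly with the two projections to $\Omega\Sigma X$. Here I would use that the map $X \to \Sigma X$ is (the underlying map of) the inclusion of one of the two cone points, which factors canonically as $X \to CX \to \Sigma X$ where $CX \simeq \ast$ is the (pointed) cone; equivalently $X \to \Sigma X$ is nullhomotopic via a \emph{canonical} nullhomotopy coming from the pushout structure. A nullhomotopy of $X \to \Sigma X$ is precisely a lift $X \to \Omega\Sigma X$ (which is the adjunction unit), and the fiber product $X \times_{\Sigma X} \ast$ over such a nulhomotopic map is $X \times \Omega\Sigma X$ — this is the standard computation of the fiber of a map that has been trivialized, using the pasting lemma for pullbacks against the path fibration $\ast \to \Sigma X$ with fiber $\Omega\Sigma X$. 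Under this identification, one projection $X\times\Omega\Sigma X \to \Omega\Sigma X$ is $\pr_2$, and the other is the map I name $a_X$; naturality in $X$ is automatic since every construction used (suspension pushout, basepoint, cone, adjunction unit, universality comparison) is functorial.

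Assembling: the pushout of $\Omega\Sigma X \xleftarrow{\pr_2} X\times\Omega\Sigma X \xrightarrow{a_X} \Omega\Sigma X$ is $\Omega\Sigma X \times_{\Sigma X} \ast$, and since $\ast \to \Sigma X \leftarrow \ast$ has pullback $\Omega\Sigma X$, computing $\Omega\Sigma X$ as itself the pullback and noting $\ast \times_{\Sigma X} \ast \simeq \Omega\Sigma X$... more directly: I would observe that base-changing the \emph{whole} square $(\ast \leftarrow X \rightarrow \ast)$ along $\ast \to \Sigma X$ yields the square $(\Omega\Sigma X \leftarrow X\times\Omega\Sigma X \rightarrow \Omega\Sigma X)$ with apex the base change of $\Sigma X$, which is $\Omega\Sigma X \times_{\Sigma X}\ast \simeq \ast$ since $\ast \to \Sigma X$ base changes over any point-indexed cone... that last claim needs care, so let me instead route through the cleanest version: replace one of the $\ast$'s in the suspension pushout by $CX \simeq \ast$, so $\Sigma X$ is the pushout of $\ast \leftarrow X \to CX$; base change along $CX \to \Sigma X$; since $CX \to \Sigma X$ has fiber $\Omega\Sigma X$ over the basepoint and $CX \simeq \ast$, the base change of each vertex is computed as a fiber, giving the square $(\Omega\Sigma X \leftarrow X\times\Omega\Sigma X \rightarrow \ast)$ with apex $CX\times_{\Sigma X} CX$; one checks this apex is contractible (two cones glued is the suspension, base-changed along a cone is the fiber, which here is a point because $CX \to \Sigma X$ followed by... ). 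Rather than belabor this, the \textbf{main obstacle} I anticipate is exactly pinning down this last apex computation and making the two maps out of $X\times\Omega\Sigma X$ land as asserted with the correct naturality — everything else is a direct application of \Cref{Mather} together with the pasting law for pullbacks. Once the apex is shown contractible, the base-changed square reads as the desired pushout square
\begin{equation*}
    \begin{tikzcd}[sep=2em]
        X \times \Omega\Sigma X \arrow[r, "\pr_2"] \arrow[d, "a_X"'] \arrow[dr, phantom, very near end, "\ulcorner", xshift=0.45em, yshift=-0.15em] & \Omega\Sigma X \arrow[d] \\
        \Omega\Sigma X \arrow[r] & \ast \period
    \end{tikzcd}
\end{equation*}
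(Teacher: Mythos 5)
Your plan is correct and is essentially the paper's own argument: base-change the suspension pushout square along the basepoint $\ast \to \Sigma X$ (the paper packages this as Mather's cube, which by \Cref{Mather} is the same as using universality directly), identifying $\ast \times_{\Sigma X} \ast \simeq \Omega\Sigma X$ and $X \times_{\Sigma X} \ast \simeq X \times \Omega\Sigma X$ by pasting pullbacks against the canonical factorization of the corner map $X \to \Sigma X$ through a cone point, with one induced map being $\pr_2$ and the other christened $a_X$. The step you flag as the main obstacle is in fact immediate: the apex of the base-changed square is $\Sigma X \times_{\Sigma X} \ast \simeq \ast$, since you are pulling back the colimit itself (with its identity map to $\Sigma X$) along $\ast \to \Sigma X$, so no detour through $CX$ is needed --- and the pushout you wrote as $\Omega\Sigma X \times_{\Sigma X} \ast$ should read $\Sigma X \times_{\Sigma X} \ast$.
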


\begin{proof}
    Write
    \begin{equation*}
        \begin{tikzcd}[sep=2em]
            X \arrow[r] \arrow[d] \arrow[dr, phantom, very near end, "\ulcorner", xshift=0.25em, yshift=-0.25em] & \ast \arrow[d, "i_2"] \\
            \ast \arrow[r, "i_1"'] & \Sigma X 
        \end{tikzcd}
    \end{equation*}
    for the pushout square defining the suspension $ \Sigma X $.
    The definition of $ \Sigma X $ provides an equivalence between the points $ i_1,i_2 \colon \ast \to \Sigma X $, hence there are natural pullback squares
    \begin{equation*}
        \begin{tikzcd}[sep=2em]
            X \times \Omega\Sigma X \arrow[r, "a_X"] \arrow[d] \arrow[dr, phantom, very near start, "\lrcorner", xshift=-0.5em, yshift=0.2em] & \Omega X \arrow[r] \arrow[d] \arrow[dr, phantom, very near start, "\lrcorner", xshift=-0.2em, yshift=0.2em] & \ast \arrow[d, "i_2"] \\
            X \arrow[r] & \ast \arrow[r, "i_1"'] & X
        \end{tikzcd}
    \end{equation*}
    The claim now follows from Mather's Second Cube Lemma applied to the cube
    \begin{equation*}\label{eq:Mathercubeapp}
        \begin{gathered}[b]
            \begin{tikzcd}[column sep={8ex,between origins}, row sep={8ex,between origins}]
                X \times \Omega\Sigma X \arrow[rr, "\pr_2"] \arrow[dd, near end]  \arrow[dr, "a_X" description] & & \Omega\Sigma X \arrow[dd]  \arrow[dr] \\
                & \Omega\Sigma X \arrow[rr, crossing over] & & \ast \arrow[dd, "i_2"]  \\
                X \arrow[rr] \arrow[dr] & & \ast \arrow[dr, "i_2" description] \\
                & \ast \arrow[rr, "i_1"'] \arrow[from=uu, crossing over] & & \Sigma X  \period 
            \end{tikzcd}
            \\
        \end{gathered}
        \qedhere
    \end{equation*}
\end{proof}

\begin{warning}
    The morphism $ a_X \colon X \times \Omega\Sigma X \to \Omega\Sigma X $ in \Cref{lem:pr2pushout} cannot generally be identified with the second projection $ \pr_2 \colon X \times \Omega\Sigma X \to \Omega\Sigma X $.
    Indeed, since $ \dX $ is assumed to have universal pushouts, there is a natural pushout square
    \begin{equation*}
        \begin{tikzcd}[sep=2em]
            X \times \Omega\Sigma X \arrow[r, "\pr_2"] \arrow[d, "\pr_2"'] \arrow[dr, phantom, very near end, "\ulcorner", xshift=1em, yshift=-0.25em] & \Omega\Sigma X \arrow[d] \\
            \Omega\Sigma X \arrow[r] & \Sigma X \times \Omega\Sigma X \period
        \end{tikzcd}
    \end{equation*}
    Moreover, the object $ \Sigma X \times \Omega\Sigma X $ is not generally terminal in $ \dX $.
\end{warning}

\begin{corollary}\label{cor:cofibpr2}
    Let $\dX$ be an $\infty$-category with universal pushouts.
    For every pointed object $ X \in \dX_{\ast} $, there is a natural equivalence 
    \begin{equation*}
        \cofib(\pr_2 \colon X \times \Omega\Sigma X \to \Omega\Sigma X) \equivalent \Sigma\Omega\Sigma X \period
    \end{equation*} 
\end{corollary}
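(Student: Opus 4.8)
The plan is to read off the equivalence by pasting two pushout squares, so that the entire content is already contained in \Cref{lem:pr2pushout}. That lemma supplies, naturally in $X \in \dX_{\ast}$, a map $a_X \colon X \times \Omega\Sigma X \to \Omega\Sigma X$ and a pushout square
\begin{equation*}
    \begin{tikzcd}[sep=2em]
        X \times \Omega\Sigma X \arrow[r, "\pr_2"] \arrow[d, "a_X"'] & \Omega\Sigma X \arrow[d] \\
        \Omega\Sigma X \arrow[r] & \ast
    \end{tikzcd}
\end{equation*}
whose top arrow is the projection $\pr_2$.

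First I would glue underneath this square the pushout square
\begin{equation*}
    \begin{tikzcd}[sep=2em]
        \Omega\Sigma X \arrow[r] \arrow[d] & \ast \arrow[d] \\
        \ast \arrow[r] & \Sigma\Omega\Sigma X
    \end{tikzcd}
\end{equation*}
exhibiting $\Sigma\Omega\Sigma X$ as the suspension of $\Omega\Sigma X$, identifying its upper-left corner with the lower-left corner $\Omega\Sigma X$ of the previous square and its upper edge with the lower edge $\Omega\Sigma X \to \ast$ of the previous square (both edges being the essentially unique map to the terminal object). By the usual pasting property of pushout squares, since both the upper and the lower square are pushouts, the outer rectangle
\begin{equation*}
    \begin{tikzcd}[sep=2em]
        X \times \Omega\Sigma X \arrow[r, "\pr_2"] \arrow[d] & \Omega\Sigma X \arrow[d] \\
        \ast \arrow[r] & \Sigma\Omega\Sigma X
    \end{tikzcd}
\end{equation*}
is again a pushout. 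Its top arrow is $\pr_2$ and its left arrow, being the composite $X \times \Omega\Sigma X \xrightarrow{a_X} \Omega\Sigma X \to \ast$, is the essentially unique map to $\ast$; hence this square exhibits $\Sigma\Omega\Sigma X$ as $\cofib(\pr_2 \colon X \times \Omega\Sigma X \to \Omega\Sigma X)$, which is the asserted equivalence. (Note that although $a_X \neq \pr_2$ in general, only the composite of $a_X$ with the map to $\ast$ enters here, and that composite is canonical.)

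Naturality is then automatic: the square of \Cref{lem:pr2pushout} and the suspension square are natural in their inputs, and pasting is functorial, so the equivalence $\cofib(\pr_2) \simeq \Sigma\Omega\Sigma X$ is natural in $X \in \dX_{\ast}$. The only point requiring a moment's thought is the bookkeeping in the pasting — which square plays which role — together with the harmless observation that the cofiber of $\pr_2$ may be formed either in $\dX$ or in $\dX_{\ast}$ without ambiguity, since the forgetful functor $\dX_{\ast} \to \dX$ creates colimits. I do not expect any genuine obstacle; the argument is a short formal diagram chase on top of \Cref{lem:pr2pushout}.
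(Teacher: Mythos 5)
Your argument is correct and is exactly the (implicit) argument the paper intends: the corollary is stated as an immediate consequence of \Cref{lem:pr2pushout}, and the content is precisely your pasting of the lemma's pushout square with the suspension square for $\Omega\Sigma X$, noting that only the composite $X \times \Omega\Sigma X \xrightarrow{a_X} \Omega\Sigma X \to \ast$ (the canonical map to the terminal object) enters. The only nitpick is your parenthetical that the forgetful functor $\dX_{\ast} \to \dX$ ``creates colimits''---it creates weakly contractible colimits such as pushouts (hence cofibers), which is all that is needed here, but not coproducts.
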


Next, we give a convenient expression for the term $ \Sigma(X \wedge \Omega \Sigma X) $ in the James Splitting as the pushout of the span
\begin{equation*}
    \begin{tikzcd}[sep=2em]
        X & X \times \Omega \Sigma X \arrow[l, "\pr_1"'] \arrow[r, "\pr_2"] & \Omega \Sigma X \period 
    \end{tikzcd}
\end{equation*}
Our proof of this appeals to the following fact, which follows immediately from the definitions.

\begin{lemma}\label{lem:wedgepushout}
    Let $ \dX $ be an $ \infty $-category with pushouts and a terminal object, and let $ X,Y \in \dX_{\ast} $ be pointed objects of $ \dX $.
    Then the square 
    \begin{equation*}
        \begin{tikzcd}[sep=2em]
            X \wedgesum Y \arrow[r, "{(*,\id_{Y})}"] \arrow[d, "{(\id_{X},*)}"'] & Y \arrow[d] \\
            X \arrow[r] & \ast
        \end{tikzcd}
    \end{equation*}
    is a pushout square.
\end{lemma}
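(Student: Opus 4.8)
The plan is to check the universal property of the pushout directly on mapping spaces. Since $ \ast $ is a zero object of $ \dX_{\ast} $, the displayed square is a pushout if and only if, for every object $ Z \in \dX_{\ast} $, the canonical map
\begin{equation*}
    \Map_{\dX_{\ast}}(\ast, Z) \longrightarrow \Map_{\dX_{\ast}}(Y, Z) \times_{\Map_{\dX_{\ast}}(X \wedgesum Y, Z)} \Map_{\dX_{\ast}}(X, Z)
\end{equation*}
is an equivalence; as the source is contractible, it suffices to show that the target is contractible.

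Writing $ \iota_X \colon X \to X \wedgesum Y $ and $ \iota_Y \colon Y \to X \wedgesum Y $ for the coproduct inclusions, I would first use the universal property of the coproduct to identify $ \Map_{\dX_{\ast}}(X \wedgesum Y, Z) \simeq \Map_{\dX_{\ast}}(X, Z) \times \Map_{\dX_{\ast}}(Y, Z) $ via precomposition with $ \iota_X $ and $ \iota_Y $, and then unwind the two legs of the span: since $ (\ast, \id_Y) \circ \iota_X $ is a zero morphism and $ (\ast, \id_Y) \circ \iota_Y \simeq \id_Y $, precomposition with $ (\ast, \id_Y) \colon X \wedgesum Y \to Y $ becomes the map $ \Map_{\dX_{\ast}}(Y, Z) \to \Map_{\dX_{\ast}}(X, Z) \times \Map_{\dX_{\ast}}(Y, Z) $ sending $ \varphi \mapsto (0, \varphi) $; symmetrically, precomposition with $ (\id_X, \ast) $ becomes the map $ \psi \mapsto (\psi, 0) $. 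This identification of the two structure maps is the only slightly fiddly point, but it is immediate from the definitions.

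It then remains to observe that the fiber product of $ \psi \mapsto (\psi, 0) $ and $ \varphi \mapsto (0, \varphi) $ over $ \Map_{\dX_{\ast}}(X, Z) \times \Map_{\dX_{\ast}}(Y, Z) $ splits as the product of the space of pairs $ (\psi, p) $, with $ p $ a path from $ 0 $ to $ \psi $ in $ \Map_{\dX_{\ast}}(X, Z) $, and the space of pairs $ (\varphi, q) $, with $ q $ a path from $ \varphi $ to $ 0 $ in $ \Map_{\dX_{\ast}}(Y, Z) $. Each factor is a space of paths emanating from a fixed point, hence contractible, so the fiber product is contractible, as needed. I do not anticipate a genuine obstacle: the statement is formal, just as the paper indicates. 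One could equally run a diagrammatic proof, pasting the defining pushout square $ X \wedgesum Y \simeq X \sqcup_{\ast} Y $ with the cofiber square of $ \iota_X $ to get $ \cofib(\iota_X) \simeq Y $ with cofiber map $ (\ast, \id_Y) $, and then pasting this pushout square onto the displayed square and using $ (\id_X, \ast) \circ \iota_X \simeq \id_X $ to realize the relevant outer rectangle as a pushout along an equivalence.
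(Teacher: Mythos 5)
Your proof is correct. The paper itself offers no argument for this lemma — it is stated as following ``immediately from the definitions'' — so there is nothing to compare line by line; your mapping-space verification is a perfectly good way to make the assertion precise. The key computations all check out: $\Map_{\dX_{\ast}}(X \wedgesum Y, Z) \simeq \Map_{\dX_{\ast}}(X,Z) \times \Map_{\dX_{\ast}}(Y,Z)$, the two structure maps become $\psi \mapsto (\psi,0)$ and $\varphi \mapsto (0,\varphi)$, and the resulting fiber product splits as a product of two based path spaces, hence is contractible, which is exactly the universal property of the pushout since $\Map_{\dX_{\ast}}(\ast,Z)$ is contractible. (One could add the one-line remark that pushouts in $\dX_{\ast}$ are computed in $\dX$, so the square is equally a pushout in $\dX$, which is how it gets used in the proof of the proposition on $\Sigma(X \smashprod Y)$.) Your closing diagrammatic sketch also works and is arguably closer in spirit to what the authors mean by ``immediate from the definitions'': pasting the defining pushout square of $X \wedgesum Y$ against the cofiber square of $\iota_X$ identifies $\cofib(\iota_X) \simeq Y$ with cofiber map $(\ast,\id_Y)$, and then pasting that square against the displayed one, the outer rectangle has top map $(\id_X,\ast)\circ\iota_X \simeq \id_X$ and is therefore a pushout, so the pasting law gives the claim; this route uses only the same pushout-pasting manipulations the paper employs elsewhere, while your mapping-space argument is more self-contained.
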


\begin{prop}\label{pushout}
    Let $\dX$ be an $\infty$-category with finite limits and pushouts.
    Then for every pair of pointed objects $ X,Y \in \dX $, there is a pushout
    square
    \begin{equation*}
        \begin{tikzcd}[sep=2em]
            X \times Y \arrow[r, "\pr_2"] \arrow[d, "\pr_1"'] & Y \arrow[d] \\
            X \arrow[r] & \Sigma(X \smashprod Y) \comma
        \end{tikzcd}
    \end{equation*}
    where the morphisms $ X \to \Sigma(X \smashprod Y) $ and $ Y \to \Sigma(X
    \smashprod Y) $ are null.
\end{prop}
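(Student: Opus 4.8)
The plan is to identify $\Sigma(X\smashprod Y)$ with the pushout of the span $X \xleftarrow{\pr_1} X \times Y \xrightarrow{\pr_2} Y$. Granting this, the asserted pushout square is nothing but the defining square of $X \coproductlimits^{X\times Y} Y$ with its apex renamed, and the two morphisms out of $X$ and $Y$ are the evident ones (once one checks they are null). To compute $X \coproductlimits^{X\times Y} Y$ I would write it as an iterated pushout in two different ways via the $3\times 3$-Lemma (\Cref{lem:3by3}): one evaluation manifestly recovers $X \coproductlimits^{X\times Y} Y$, while the other produces $\Sigma(X\smashprod Y)$.

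Concretely, write $p_X \colon X \vee Y \to X$ and $p_Y \colon X \vee Y \to Y$ for the morphisms restricting to the identity on one summand and to the null morphism on the other, and let $\iota \colon X \vee Y \to X \times Y$ be the comparison morphism; recall $\pr_1 \circ \iota = p_X$ and $\pr_2 \circ \iota = p_Y$. I would apply \Cref{lem:3by3} to the commutative diagram
\begin{equation*}
    \begin{tikzcd}[sep=2em]
        \ast & \ast \arrow[l] \arrow[r] & \ast \\
        X \arrow[u] \arrow[d, equal] & X \vee Y \arrow[l, "p_X"'] \arrow[r, "p_Y"] \arrow[u] \arrow[d, "\iota"] & Y \arrow[u] \arrow[d, equal] \\
        X & X \times Y \arrow[l, "\pr_1"'] \arrow[r, "\pr_2"] & Y
    \end{tikzcd}
\end{equation*}
in which the unlabeled morphisms are the canonical ones to and from the zero object. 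The only commutativities requiring any thought are those of the two lower squares, which are exactly the relations $\pr_1 \circ \iota = p_X$ and $\pr_2 \circ \iota = p_Y$; each of the two upper squares commutes since every morphism appearing in it factors through $\ast$.

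Now I would read off the colimit in both of the ways provided by \Cref{lem:3by3}. Forming the pushouts of the rows first: the top row contributes $\ast$, the middle row contributes $\ast$ by \Cref{lem:wedgepushout}, and the bottom row contributes $X \coproductlimits^{X \times Y} Y$; since the resulting span $\ast \leftarrow \ast \to X \coproductlimits^{X\times Y} Y$ has an equivalence for its left leg, its pushout is $X \coproductlimits^{X \times Y} Y$. Forming the pushouts of the columns first: the left and right columns each contribute $\ast$ (a span one of whose legs is an equivalence), while the middle column is precisely the defining span of the smash product and so contributes $X \smashprod Y$; the pushout of the resulting span $\ast \leftarrow X \smashprod Y \to \ast$ is $\Sigma(X \smashprod Y)$. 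Hence $X \coproductlimits^{X \times Y} Y \simeq \Sigma(X \smashprod Y)$, naturally in $X$ and $Y$. For the nullity claim, observe that in the diagram the objects $X$ and $Y$ of the bottom row map into the left and right column pushouts $\ast$; tracing through the equivalences, the morphisms $X \to \Sigma(X \smashprod Y)$ and $Y \to \Sigma(X \smashprod Y)$ therefore factor through $\ast$.

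The one genuinely delicate step is choosing the $3\times 3$ diagram correctly: it must simultaneously exhibit $X \smashprod Y$ as the pushout of its middle column and an instance of \Cref{lem:wedgepushout} as its middle row — both of these spans having apex $X \vee Y$ — while the outer rows and columns contribute nothing. The subtlety to watch is that the left and right columns are contractible not because their legs are null but because one leg is an identity; this is exactly what makes them compatible with the constant top row, and it is what rules out the more naive attempt of placing the \Cref{lem:wedgepushout} span as an outer column (there the commutativity constraints would force $p_X$ and $p_Y$ to be null, which they are not). Once the diagram is in hand, everything else is a handful of routine diagram checks.
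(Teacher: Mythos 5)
Your proposal is correct and is essentially the paper's own argument: the same $3\times 3$ diagram (constant top row, the wedge span of \Cref{lem:wedgepushout} as middle row mapping via the comparison morphism to the product span below), evaluated once by rows to get $X \coproductlimits^{X\times Y} Y$ and once by columns to get $\Sigma(X\smashprod Y)$, with nullity read off from the factorization through the outer columns' pushouts. No gaps.
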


\begin{proof}
    Let $C$ denote the pushout $ X \coproductlimits^{X \times Y} Y $; we desire to show that $ C \simeq \Sigma (X\wedge Y) $.
    We apply \Cref{lem:3by3} to the commutative diagram
    \begin{equation}\label{iterate}
        \begin{tikzcd}[sep=2em]
    	    \ast & \ast \arrow[l] \arrow[r] & \ast \\
            X \arrow[u] \arrow[d, equals] & X\vee Y \arrow[l] \arrow[r] \arrow[u] \arrow[d] & Y \arrow[u] \arrow[d, equals] \\
    	    X & X\times Y \arrow[l] \arrow[r] & Y \period
        \end{tikzcd}
    \end{equation}
    Appealing to \Cref{lem:wedgepushout}, taking pushouts of the rows of \eqref{iterate} results in the span
    \begin{equation*}
        \begin{tikzcd}[sep=1.5em]
            C & \ast \arrow[l] \arrow[r] & \ast \comma 
        \end{tikzcd}
    \end{equation*}
    which has pushout $ C $.
    Alternatively, since the smash product $ X \wedge Y$ is the cofiber of the comparison morphism $ X \vee Y \to X\times Y $, taking pushouts of the columns of \eqref{iterate} results in the span
    \begin{equation}\label{sigmasmashspan}
        \begin{tikzcd}[sep=1.5em]
            \ast & X\smashprod Y  \arrow[l] \arrow[r] & \ast \period 
        \end{tikzcd}
    \end{equation}
    By definition, the pushout of the span \eqref{sigmasmashspan} is the suspension $ \Sigma(X\wedge Y) $, so \Cref{lem:3by3} shows that
    \begin{equation*}
        C \equivalent \Sigma (X\wedge Y) \period
    \end{equation*}
   
    To conclude the proof, note that it follows from the definitions that the induced morphisms
    \begin{equation*}
        X \to \Sigma(X \wedge Y) \andeq Y \to \Sigma(X \wedge Y)
    \end{equation*}
    factor through the zero object $ \ast \in \dX_{\ast} $.
\end{proof}

\Cref{pushout} also provides a general formula for the cofiber $ \cofib(\pr_2 \colon X \times Y \to Y) $ that allows us to relate the expressions for $ \Sigma\Omega\Sigma X $ and $ \Sigma(X \wedge \Omega \Sigma X) $ from \Cref{cor:cofibpr2,pushout}, respectively.

\begin{corollary}\label{cofiber-product}
    Let $\dX$ be an $\infty$-category with finite limits and pushouts.
    Then, for every pair of pointed objects $ X,Y\in \dX_{\ast} $:
    \begin{enumerate}[{\upshape (\ref*{cofiber-product}.1)}]
        \item\label{cofiber-product.1} There is a natural equivalence $ \cofib(\pr_2 \colon X \times Y \to Y) \equivalent \Sigma X \wedgesum \Sigma(X \smashprod Y) $.

        \item\label{cofiber-product.2} There a natural equivalence $ \Sigma(X\times Y) \simeq \Sigma(X \wedge Y) \vee \Sigma X \vee \Sigma Y $.
    \end{enumerate}
\end{corollary}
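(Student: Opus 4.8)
The plan is to deduce both equivalences from \Cref{pushout} by pasting pushout squares, using throughout the following elementary observation: \emph{if $f \colon A \to B$ is a null morphism in $\dX_{\ast}$, then there is a natural equivalence $\cofib(f) \simeq B \vee \Sigma A$.} To prove this, I would factor $f$ through the zero object as $A \to \ast \to B$ and paste, left to right, the pushout square defining $\Sigma A$ with the coproduct square exhibiting $\Sigma A \vee B$ as the pushout of $\Sigma A \leftarrow \ast \to B$; the outer rectangle then exhibits $\Sigma A \vee B$ as the pushout of $\ast \leftarrow A \xrightarrow{f} B$, i.e.\ as $\cofib(f)$. (Every pushout in this proof involves only pointed objects and pointed morphisms, so it may be formed in $\dX_{\ast}$, where it agrees with the underlying pushout in $\dX$.)

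For \enumref{cofiber-product}{1}, I would stack the pushout square of \Cref{pushout} on top of the pushout square computing the cofiber of the null morphism $X \to \Sigma(X \smashprod Y)$:
\begin{equation*}
    \begin{tikzcd}[sep=2em]
        X \times Y \arrow[r, "\pr_2"] \arrow[d, "\pr_1"'] & Y \arrow[d] \\
        X \arrow[r] \arrow[d] & \Sigma(X \smashprod Y) \arrow[d] \\
        \ast \arrow[r] & \cofib\bigl(X \to \Sigma(X \smashprod Y)\bigr) \period
    \end{tikzcd}
\end{equation*}
Since $\ast$ is a zero object, the left-hand composite $X \times Y \xrightarrow{\pr_1} X \to \ast$ is the unique morphism $X \times Y \to \ast$, so the pasting lemma identifies the outer rectangle with the pushout computing $\cofib(\pr_2 \colon X \times Y \to Y)$. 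Hence $\cofib(\pr_2) \simeq \cofib\bigl(X \to \Sigma(X \smashprod Y)\bigr)$, and since $X \to \Sigma(X \smashprod Y)$ is null, the observation above gives $\cofib(\pr_2) \simeq \Sigma X \vee \Sigma(X \smashprod Y)$. Tracing the right-hand edge of the outer rectangle, I would also record for later use that the structure morphism $Y \to \cofib(\pr_2)$ factors through the null morphism $Y \to \Sigma(X \smashprod Y)$ of \Cref{pushout}, and so is itself null.

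For \enumref{cofiber-product}{2}, I would paste, left to right, the pushout square just obtained for $\cofib(\pr_2)$ with the pushout square computing the cofiber of $Y \to \cofib(\pr_2)$:
\begin{equation*}
    \begin{tikzcd}[sep=2em]
        X \times Y \arrow[r, "\pr_2"] \arrow[d] & Y \arrow[d] \arrow[r] & \ast \arrow[d] \\
        \ast \arrow[r] & \cofib(\pr_2) \arrow[r] & \cofib\bigl(Y \to \cofib(\pr_2)\bigr) \period
    \end{tikzcd}
\end{equation*}
Both composites $X \times Y \to \ast$ in the outer rectangle are the unique such morphism, so the pasting lemma identifies the outer rectangle with the pushout $\ast \leftarrow X \times Y \to \ast$ defining $\Sigma(X \times Y)$; thus $\Sigma(X \times Y) \simeq \cofib\bigl(Y \to \cofib(\pr_2)\bigr)$. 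On the other hand, $Y \to \cofib(\pr_2)$ is null, so the observation of the first paragraph — together with \enumref{cofiber-product}{1} — gives $\cofib\bigl(Y \to \cofib(\pr_2)\bigr) \simeq \cofib(\pr_2) \vee \Sigma Y \simeq \Sigma(X \smashprod Y) \vee \Sigma X \vee \Sigma Y$. Combining the two identifications proves \enumref{cofiber-product}{2}.

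The argument is pure diagram-chasing, so I expect no genuinely hard step; the part requiring the most care will be the bookkeeping — checking that the relevant composites into $\ast$ are the canonical ones (so that the pasted rectangles really compute the claimed cofibers and the suspension), and that the structure morphism $Y \to \cofib(\pr_2)$ inherits nullity from $Y \to \Sigma(X \smashprod Y)$. Finally, every square and every instance of the pasting lemma used above is natural in $X$ and $Y$, so the resulting equivalences are natural, as claimed.
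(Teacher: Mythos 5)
Your proposal is correct and follows essentially the same route as the paper: both rest on the pushout square of \Cref{pushout}, the nullity of the maps $X, Y \to \Sigma(X \smashprod Y)$, the identification of the cofiber of a null map with a wedge involving a suspension, and pasting of pushout squares (the paper organizes this as a single $3 \times 3$ grid of pushouts, which is the same computation as your two-step pasting). Your explicit proof of the null-cofiber observation is a detail the paper leaves implicit, but the underlying argument is identical.
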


\begin{proof}
    Consider the diagram
    \begin{equation}\label{eq:4by4}
        \begin{tikzcd}[column sep={22ex,between origins}, row sep={8ex,between origins}]
            X \times Y \arrow[r, "\pr_2"] \arrow[d, "\pr_1"'] \arrow[dr, phantom, very near end, "\ulcorner", xshift=0.75em, yshift=-0.25em] & Y \arrow[d] \arrow[r] & \ast \arrow[d] \\
            X \arrow[r] \arrow[d] & \Sigma(X \smashprod Y) \arrow[d] \arrow[r] &  \Sigma Y \wedgesum \Sigma(X \smashprod Y) \arrow[d]  \\ 
            \ast \arrow[r] & \Sigma X \wedgesum \Sigma(X \smashprod Y) \arrow[r] &
	    \Sigma X \wedgesum \Sigma Y \wedgesum \Sigma(X \smashprod Y) \comma
        \end{tikzcd}
    \end{equation}
    where the top-left square is the pushout square of \Cref{pushout} and all of the morphisms in the bottom-right square are coproduct insertions.
    Since the maps $ X \to \Sigma(X \smashprod Y) $ and $ Y \to \Sigma(X \smashprod Y) $ are null, the diagram \eqref{eq:4by4} commutes and the bottom-left and top-right squares of \eqref{eq:4by4} are pushout squares.
    This proves \enumref{cofiber-product}{1}.
    To prove \enumref{cofiber-product}{2}, note that the bottom-right square in the diagram \eqref{eq:4by4} is a pushout.
\end{proof}

\Cref{cor:cofibpr2,cofiber-product} now combine to give the James Splitting.

\begin{proof}[Proof of \Cref{james}]
    Combining \Cref{cor:cofibpr2} with \Cref{cofiber-product} in the case that $ Y = \Omega\Sigma X$ we see that there are natural equivalences
    \begin{align*}
        \Sigma\Omega\Sigma X &\equivalent \cofib(\pr_2 \colon X \times \Omega\Sigma X \to \Omega\Sigma X) \\
        &\equivalent \Sigma X \wedgesum \Sigma( X \smashprod \Omega\Sigma X) \period \qedhere
    \end{align*} 
\end{proof}

The splitting
\begin{equation*}
    \Sigma \Omega \Sigma X \simeq \paren{\bigvee_{1 \leq i \leq n} \Sigma X^{\wedge i}} \vee (X^{\smashprod n} \wedge \Sigma\Omega \Sigma X)
\end{equation*}
of \Cref{jamesredux} is immediate from \Cref{james} combined with the following elementary fact:

\begin{lemma}\label{lem:suspensionsmash}
    Let $\dX$ be an $\infty$-category with universal pushouts.
    For every pair of pointed objects $ X,Y\in \dX_{\ast} $, there is a natural equivalence
    \begin{equation*}
        \Sigma(X \smashprod Y) \equivalent X \smashprod \Sigma Y \period
    \end{equation*}
\end{lemma}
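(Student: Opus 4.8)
The plan is to reduce everything to the statement that the functor $X \smashprod (-)\colon \dX_{\ast} \to \dX_{\ast}$ preserves pushouts and the zero object, and then to feed it the pushout square $\ast \leftarrow Y \rightarrow \ast$ defining $\Sigma Y$. Granting this, one reads off natural equivalences
\begin{align*}
    X \smashprod \Sigma Y
        &\equivalent X \smashprod \paren{\ast \coproductlimits^{Y} \ast}
        \equivalent (X \smashprod \ast) \coproductlimits^{X \smashprod Y} (X \smashprod \ast) \\
        &\equivalent \ast \coproductlimits^{X \smashprod Y} \ast
        \equivalent \Sigma(X \smashprod Y) \comma
\end{align*}
which is the asserted equivalence; naturality in $X$ and in $Y$ is automatic, since the comparison morphism $c$ below and the formation of cofibers are both natural.

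To prove that $X \smashprod (-)$ preserves pushouts, I would write $X \smashprod Z = \cofib(c_Z)$ for the natural comparison morphism $c_Z \colon X \wedgesum Z \to X \times Z$, and check that each of the three constituent functors preserves pushouts. The functor $X \wedgesum (-)$ preserves all colimits that exist in $\dX_{\ast}$, since $X \wedgesum Z$ is the pushout of $X \leftarrow \ast \rightarrow Z$ along basepoints and colimits commute with this pushout. The functor $X \times (-)$ preserves pushouts: a pushout of pointed objects has the same underlying pushout in $\dX$ (the indexing category of a span is connected, so such colimits in the coslice $\dX_{\ast}$ are created by the forgetful functor $\dX_{\ast} \to \dX$), and on underlying objects $X \times (-)$ is base change along $X \to \ast$, so this is precisely the universality of pushouts in $\dX$. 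Finally, $\cofib$ commutes with pushouts because it is itself a pushout. Composing, the functor $Z \mapsto (c_Z)$ into $\Fun(\Delta^1,\dX_\ast)$ preserves pushouts, and hence so does $X \smashprod (-)$. It also sends the zero object to the zero object: $X \wedgesum \ast \equivalent X \equivalent X \times \ast$, and under these identifications $c_{\ast}$ is $\id_X$, so $X \smashprod \ast = \cofib(\id_X) \equivalent \ast$.

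The only step that uses the hypothesis — and hence the crux of the argument, short though it is — is the claim that $X \times (-)$ preserves pushouts, which is literally a restatement of the universality of pushouts in $\dX$. Everything else (that a span is connected so coslice colimits are created by the forgetful functor, that cofibers are pushouts, and the zero-object computation) is formal. One could instead avoid functor language and derive the equivalence directly from two applications of \Cref{lem:3by3} along the lines of the proof of \Cref{pushout}, but I prefer the argument above because it pinpoints exactly which hypothesis is doing the work.
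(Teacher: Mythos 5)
Your proof is correct and is essentially the paper's own argument in functorial packaging: the paper likewise writes $X \smashprod \Sigma Y$ as $\cofib\big((X \wedgesum \ast) \coproductlimits^{X \wedgesum Y} (X \wedgesum \ast) \to (X \times \ast) \coproductlimits^{X \times Y} (X \times \ast)\big)$, using universality of pushouts for the product square, formal commutation of colimits for the wedge square and the cofiber, and $X \smashprod \ast \equivalent \ast$. Your observation that the only place the hypothesis enters is in $X \times (-)$ preserving pushouts matches exactly where the paper invokes universality.
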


\begin{proof}
    Since pushouts in $ \dX $ are universal and colimits commute, the squares
    \begin{equation*}
        \begin{tikzcd}[sep=2.5em]
           X \times Y \arrow[r, "\id_{X} \times \ast"] \arrow[d, "\id_{X} \times \ast"'] & X \times \ast \arrow[d, "\id_{X} \times \ast"] \\
           X \times \ast \arrow[r, "\id_{X} \times \ast"'] & X \times \Sigma Y
        \end{tikzcd}
        \andeq
        \begin{tikzcd}[sep=2.5em]
           X \wedgesum Y \arrow[r, "\id_{X} \wedgesum \ast"] \arrow[d, "\id_{X} \wedgesum \ast"'] & X \wedgesum \ast \arrow[d, "\id_{X} \wedgesum \ast"] \\
           X \wedgesum \ast \arrow[r, "\id_{X} \wedgesum \ast"'] & X \wedgesum \Sigma Y
        \end{tikzcd}
    \end{equation*}
    are both pushouts in $ \dX_{\ast} $.
    By the definition of the smash product and the facts that colimits commute and $ X \smashprod \ast \equivalent \ast $, we see that
    \begin{align*}
        X \smashprod \Sigma Y &= \cofib(X \vee \Sigma Y \to X \times \Sigma Y) \\
        &\equivalent \cofib\paren{ (X \wedgesum \ast) \coproductlimits^{X \wedgesum Y} (X \wedgesum \ast) \to (X \times \ast) \coproductlimits^{X \times Y} (X \times \ast) } \\ 
        &\equivalent (X \smashprod \ast) \coproductlimits^{X \smashprod Y} (X \smashprod \ast) \\
        &\equivalent \Sigma(X \smashprod Y) \period \qedhere
    \end{align*}
\end{proof}


\subsection{Ganea's Lemma}\label{subsec:Ganea}

Since the method of proof is similar to the arguments in this section, we close with the following lemma of Ganea \cite[Theorem 1.1]{MR179791}.
This will not be used in the sequel.

\begin{lemma}\label{lem:Ganea}
    Let $\dX$ be an $\infty$-category with universal pushouts.
    Let $ f \colon X \to Y $ be a morphism in $ \dX_{\ast} $, and write $ i \colon \fib(f) \to X $ for the induced morphism from the fiber of $ f $.
    Then there is a natural equivalence
    \begin{equation*}
        \fib(\cofib(i) \to Y) \equivalent \Sigma(\Omega Y \smashprod \fib(f)) \period
    \end{equation*}
\end{lemma}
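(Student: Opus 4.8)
The plan is to compute $\fib\bigl(\bar f\colon\cofib(i)\to Y\bigr)$ — where $\bar f$ is induced by the canonical nullhomotopy of $f\circ i$ and restricts to $f$ along $X\to\cofib(i)$ — by base-changing the pushout square defining $\cofib(i)$ along the basepoint of $Y$. Write $F\colonequals\fib(f)$. The defining pushout square
\begin{equation*}
    \begin{tikzcd}[sep=2em]
        F \arrow[r, "i"] \arrow[d] & X \arrow[d] \\
        \ast \arrow[r] & \cofib(i)
    \end{tikzcd}
\end{equation*}
maps compatibly to $Y$ (via $\bar f$, $f$, the basepoint, and $f\circ i$), so since $\dX$ has universal pushouts, applying $(-)\times_Y\ast$ corner-wise produces another pushout square, whose pushout is $\cofib(i)\times_Y\ast\equivalent\fib(\bar f)$.

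The next step is to identify the other three corners. By definition $X\times_Y\ast=\fib(f)=F$ and $\ast\times_Y\ast=\Omega Y$. For the remaining corner, note that the structure map $F\to Y$ is $f\circ i$, which is null and hence factors through $\ast$; pasting pullback squares then gives $F\times_Y\ast\equivalent F\times(\ast\times_Y\ast)=F\times\Omega Y$, under which the map to $\ast\times_Y\ast=\Omega Y$ becomes the projection $\pr_2$. Thus we obtain a natural equivalence
\begin{equation*}
    \fib(\bar f)\equivalent F\coproductlimits^{F\times\Omega Y}\Omega Y\comma
\end{equation*}
where the span is $F\xleftarrow{\ \alpha\ }F\times\Omega Y\xrightarrow{\ \pr_2\ }\Omega Y$ and $\alpha$ is the base-change of $i$.

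The step I expect to be the main obstacle is that $\alpha$ is \emph{not} the projection $\pr_1$ in general; it is (a form of) the monodromy action of $\Omega Y$ on $\fib(f)$. The key claim is that $(\alpha,\pr_2)\colon F\times\Omega Y\to F\times\Omega Y$ is nonetheless an equivalence. I would prove this by base-changing the \emph{defining pullback square} of $F$ along the basepoint of $Y$: since pullbacks are stable under base-change this again yields a pullback square, and identifying corners exactly as above it takes the form
\begin{equation*}
    \begin{tikzcd}[sep=2em]
        F\times\Omega Y \arrow[r, "\alpha"] \arrow[d, "\pr_2"'] & F \arrow[d] \\
        \Omega Y \arrow[r] & \ast\comma
    \end{tikzcd}
\end{equation*}
which is precisely the assertion that $(\alpha,\pr_2)$ exhibits an equivalence $F\times\Omega Y\xrightarrow{\ \sim\ }F\times\Omega Y$. (I would argue it this way rather than via the monodromy action of the group object $\Omega Y$, so as to avoid setting up the slice $\infty$-category $\dX_{/Y}$.)

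Finally, the automorphism $(\alpha,\pr_2)$ identifies the span $F\xleftarrow{\alpha}F\times\Omega Y\xrightarrow{\pr_2}\Omega Y$ with the span $F\xleftarrow{\pr_1}F\times\Omega Y\xrightarrow{\pr_2}\Omega Y$, so their pushouts agree; by \Cref{pushout} this common pushout is $\Sigma(F\smashprod\Omega Y)$, and by symmetry of the smash product this is $\Sigma(\Omega Y\smashprod F)=\Sigma(\Omega Y\smashprod\fib(f))$. All the constructions involved are natural in $f$, which gives the naturality of the resulting equivalence; the only real subtlety beyond bookkeeping with the pullback-pasting corner identifications is the shear equivalence $(\alpha,\pr_2)$ isolated above.
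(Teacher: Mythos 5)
Your proof is correct, and it follows essentially the same route as the paper's: base-change the pushout square defining $\cofib(i)$ along the basepoint of $Y$ (this is exactly the universality of pushouts, packaged in the paper via Mather's cube, \Cref{Mather}), identify the three remaining corners, and recognize the resulting pushout as $\Sigma(\Omega Y \smashprod \fib(f))$ via \Cref{pushout}. Where you go beyond the paper's write-up is the shear step: the paper's cube simply labels the maps out of $\Omega Y \times \fib(f)$ as $\pr_1$ and $\pr_2$, leaving implicit that under a single identification of $\fib(f)\times_Y \ast$ with $\fib(f)\times\Omega Y$ one of the three relevant maps is the monodromy-twisted map rather than an honest projection --- the same phenomenon the authors flag in the warning following \Cref{lem:pr2pushout}. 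Your explicit verification that $(\alpha,\pr_2)$ is an equivalence, obtained by base-changing the defining pullback square of $\fib(f)$ along the basepoint (which needs only stability of pullbacks under base change), cleanly justifies exactly the point the paper elides, and the subsequent identification of the twisted span with the projection span completes the argument just as in the paper.
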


\begin{proof}
    By \Cref{pushout}, it suffices to show that the square
    \begin{equation*}
        \begin{tikzcd}[sep=2em]
           \Omega Y \times \fib(f) \arrow[r, "\pr_2"] \arrow[d, "\pr_1"'] & \fib(f) \arrow[d] \\
           \Omega Y \arrow[r] & \fib(\cofib(i)\to Y)
        \end{tikzcd}
    \end{equation*}
    is a pushout.
    Consider the diagram
    \begin{equation}\label{diag:bigpullback}
        \begin{tikzcd}[column sep={10ex,between origins}, row sep={8ex,between origins}]
            \Omega Y \times \fib(f)  \arrow[rr, "\pr_2"] \arrow[dd, "\pr_2"']  \arrow[dr, "\pr_1" description] & & \fib(f) \arrow[dd] \arrow[dr] \\
            & \Omega Y \arrow[rr, crossing over] & & \fib(\cofib(i) \to Y) \arrow[dd] \arrow[rr] & & \ast \arrow[dd] \\
            \fib(f) \arrow[rr, "i" near end] \arrow[dr] & & X \arrow[dr] \\
            & \ast \arrow[rr] \arrow[from=uu, crossing over] & & \cofib(i) \arrow[rr] & & Y \comma
        \end{tikzcd}
    \end{equation}
    and note that each vertical square is a pullback square.
    The bottom horizontal square in \eqref{diag:bigpullback} is a pushout square by definition, so the assumption that pushouts in $ \dX $ are universal implies that the top horizontal square is a pushout as well.
\end{proof}


\section{The Hilton--Milnor Splitting}\label{sec:Hilton-Milnor}

The main result of this section is the following:

\begin{theorem}[Fundamental Hilton--Milnor Splitting]\label{hilton-milnor}
    Let $\dX$ be an $\infty$-category with universal pushouts and $X,Y\in \dX_{\ast}$.
    Then there is a natural equivalence
    \begin{equation*}
        \Omega(X \vee Y) \simeq \Omega X\times \Omega Y\times \Omega \Sigma(\Omega X \wedge \Omega Y) \period
    \end{equation*}
\end{theorem}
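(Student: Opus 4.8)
The plan is to reduce to two steps: first identify the homotopy fiber $F := \fib(c)$ of the comparison morphism $c\colon X\vee Y\to X\times Y$ with $\Sigma(\Omega X\wedge\Omega Y)$, and then split the fiber sequence obtained by looping $c$. Throughout we use the standing hypothesis that $\dX$ has universal pushouts, which is exactly what makes \Cref{Mather} available.

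\emph{Step 1: $F\simeq\Sigma(\Omega X\wedge\Omega Y)$.} I would run a Mather's Second Cube Lemma argument. Start from the pushout square exhibiting $X\vee Y$ as a coproduct,
\begin{equation*}
    \begin{tikzcd}[sep=1.8em]
        \ast \arrow[r]\arrow[d] & X \arrow[d]\\
        Y \arrow[r] & X\vee Y \comma
    \end{tikzcd}
\end{equation*}
and form the cube obtained by pulling every corner back along the canonical morphism $i\colon F\to X\vee Y$. Each vertical face of this cube is a pullback by pasting of pullback squares, so \Cref{Mather} shows that the top face
\begin{equation*}
    \begin{tikzcd}[sep=1.8em]
        \ast\times_{X\vee Y}F \arrow[r]\arrow[d] & X\times_{X\vee Y}F \arrow[d]\\
        Y\times_{X\vee Y}F \arrow[r] & F
    \end{tikzcd}
\end{equation*}
is again a pushout square. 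It then remains to identify the three non-trivial corners. Rotating the fiber sequence $F\xrightarrow{i}X\vee Y\xrightarrow{c}X\times Y$ gives $\ast\times_{X\vee Y}F = \fib(i)\simeq\Omega(X\times Y)\simeq\Omega X\times\Omega Y$. The composite $X\to X\vee Y\xrightarrow{c}X\times Y$ is the morphism $(\id_X,\ast)\colon X\to X\times Y$, so $X\times_{X\vee Y}F\simeq\fib\bigl((\id_X,\ast)\colon X\to X\times Y\bigr)\simeq\Omega Y$, and symmetrically $Y\times_{X\vee Y}F\simeq\Omega X$. Tracing the maps through these identifications — using that $\fib\bigl((\id_X,\ast)\colon X\to X\times Y\bigr)$ fits into the split fiber sequence $\Omega X\to\Omega(X\times Y)\to\Omega Y$ whose second map is a product projection — the top face becomes the span $\Omega X\leftarrow\Omega X\times\Omega Y\to\Omega Y$ whose two legs are the product projections. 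Hence \Cref{pushout} identifies its pushout: $F\simeq\Sigma(\Omega X\wedge\Omega Y)$.

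\emph{Step 2: looping and splitting off the product.} Applying $\Omega$ to $F\to X\vee Y\xrightarrow{c}X\times Y$ yields a fiber sequence $\Omega F\to\Omega(X\vee Y)\xrightarrow{\Omega c}\Omega X\times\Omega Y$ of group objects of $\dX_\ast$ (loop objects carry a natural multiplication, and $\Omega$ of a pointed morphism is a homomorphism). I claim $\Omega c$ admits a section: writing $m$ for the multiplication on $\Omega(X\vee Y)$ and $j_X,j_Y$ for the morphisms $\Omega X,\Omega Y\to\Omega(X\vee Y)$ induced by the coproduct inclusions, the composite $\sigma := m\circ(j_X\times j_Y)\colon\Omega X\times\Omega Y\to\Omega(X\vee Y)$ satisfies $\Omega c\circ\sigma = \id$, because $\Omega c$ is a homomorphism and $\Omega c\circ j_X = (\id_{\Omega X},\ast)$, $\Omega c\circ j_Y = (\ast,\id_{\Omega Y})$. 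Finally, any fiber sequence of group objects $G\to H\xrightarrow{p}K$ carrying a section $s$ of $p$ splits as a product $H\simeq G\times K$ of underlying objects: the assignments $(g,k)\mapsto g\cdot s(k)$ and $h\mapsto\bigl(h\cdot s(p(h))^{-1},\,p(h)\bigr)$ define mutually inverse morphisms, both built entirely from finite limits. Applying this to the looped fiber sequence gives $\Omega(X\vee Y)\simeq\Omega F\times\Omega X\times\Omega Y$, and substituting Step 1 yields $\Omega(X\vee Y)\simeq\Omega X\times\Omega Y\times\Omega\Sigma(\Omega X\wedge\Omega Y)$.

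The main obstacle is Step 1: identifying the three non-trivial corners of the Mather cube and — the more delicate point — checking that after these identifications the two legs of the top span really are (equivalent to) the two product projections $\Omega X\times\Omega Y\to\Omega X$ and $\Omega X\times\Omega Y\to\Omega Y$, so that \Cref{pushout} applies verbatim. The remaining ingredients — the vertical faces of the cube being pullbacks, the fiber-sequence rotations, and the group-object splitting — are formal manipulations of finite limits.
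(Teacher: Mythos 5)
Your proposal is correct and follows essentially the same route as the paper: your Step 1 is exactly the paper's \Cref{lem:fibwedgetoprod} (the same Mather cube over the wedge pushout square, with the corners identified as $\Omega X\times\Omega Y$, $\Omega X$, $\Omega Y$ and the pushout recognized via \Cref{pushout}), and your Step 2 is the paper's proof of \Cref{hilton-milnor} (loop the fiber sequence to get group objects as in \Cref{loop-monoid}, produce the section $m\circ(\Omega i_1\times\Omega i_2)$, and apply the Splitting Lemma \Cref{lem:split}). The only presentational difference is that the paper builds the cube with the projections $\pr_1,\pr_2$ already in place and checks the vertical faces are pullbacks, whereas you pull back the wedge square along $F\to X\vee Y$ and identify the legs afterwards — the delicate point you correctly flag.
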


Before giving the proof of \cref{hilton-milnor}, we discuss some applications.

\begin{example}\label{motivic-hilton-milnor}
    Let $ S $ be a scheme.
    Since colimits are universal in the $ \infty $-category $\HH(S)$ of motivic spaces over $ S $ (\Cref{ex:motivicspaces}), \Cref{hilton-milnor} implies that for any pointed motivic spaces $ X,Y \in \HH(S)_{\ast} $ we have an equivalence
    \begin{equation*}
        \Omega(X \vee Y) \simeq \Omega X\times \Omega Y\times \Omega \Sigma(\Omega X \wedge \Omega Y) \period
    \end{equation*}
\end{example}

The Hilton--Milnor Splitting allows us to give a new description of the motivic space $ \Omega\Sigma(\PP^1 \smallsetminus \{0,1,\infty\}) $:

\begin{example}\label{example:P1threepts}
    Let $ S $ be a quasicompact quasiseparated scheme.
    Write \smash{$ \{0, 1, \infty\} \subset \PP_S^1 $} for the closed subscheme given by the union of the images of the closed embeddings \smash{$ S \hookrightarrow \PP_S^1 $} at $ 0 $, $ 1 $, and $ \infty $, respectively.
    Using the Morel--Voevodsky motivic purity Theorem \cites[Theorem 7.6]{MR3727503}[\S7.5]{MR3570135}[\S3, Theorem 2.23]{MR1813224}, Wickelgren \cite[Corollary 3.2]{MR3521594} showed that there is an equivalence
    \begin{equation*}
        \Sigma(\PP_S^1 \smallsetminus \{0, 1, \infty\}) \simeq \Sigma(\GGm \vee \GGm)
    \end{equation*}
    in the $ \infty $-category $ \HH(S)_{\ast} $.
    Since $ \Sigma \GGm \equivalent \PP_S^1 $, setting $ X = Y = \Sigma\GGm $ in \Cref{motivic-hilton-milnor} we see that there are equivalences
    \begin{align*}
        \Omega \Sigma(\PP_S^1 \smallsetminus \{0,1,\infty\}) &\simeq \Omega \Sigma(\GGm \vee
        \GGm) \\
        &\simeq \Omega \Sigma \GGm \times \Omega \Sigma \GGm \times
        \Omega \Sigma(\Omega\Sigma \GGm \smashprod \Omega\Sigma\GGm) \\
        &\simeq \Omega \PP_S^1 \times \Omega \PP_S^1 \times \Omega \Sigma(\Omega\PP_S^1 \smashprod \Omega\PP_S^1) \period
    \end{align*}
    Applying the James Splitting of \Cref{motivic-james} we see that for each integer $ n \geq 1 $, we can also express $ \Omega \Sigma(\PP_S^1 \smallsetminus \{0,1,\infty\}) $ as
    \begin{equation*}
        \Omega \Sigma(\PP_S^1 \smallsetminus \{0,1,\infty\}) \simeq \Omega \PP_S^1 \times \Omega \PP_S^1 \times \Omega \paren{ \paren{ \bigvee_{1 \leq i \leq n} \Omega\PP_S^1 \smashprod \Sph{i+1,i} } \wedgesum \paren{\Sph{n+1,n} \smashprod (\Omega\PP_S^1)^{\smashprod 2} } } \period
    \end{equation*}
\end{example}

We now turn to the proof of the Hilton--Milnor Splitting.
We first show that there is a fiber sequence 
\begin{equation}\label{seq:fibofcan}
    \begin{tikzcd}[sep=1.5em]
         \Sigma (\Omega Y \wedge \Omega X) \arrow[r] & X \wedgesum Y \arrow[r] & X \times Y \period
    \end{tikzcd}
\end{equation}
We then show that the sequence \eqref{seq:fibofcan} splits after taking loops.
To do this, we construct a section
\begin{equation*}
    \Omega(X \times Y) \to \Omega(X \wedgesum Y) \comma
\end{equation*}
and use the fact that a fiber sequence of group objects with a section splits on the level of underlying objects.
After proving that \eqref{seq:fibofcan} is a fiber sequence we give a quick review of group objects and deduce \Cref{hilton-milnor} from the Splitting Lemma (\Cref{lem:split}).

We start with the following observation:

\begin{lemma}\label{lem:fibofincintoprod}
    Let $ \dX $ be an $\infty$-category with finite limits and $ X, Y \in \dX_{\ast} $.
    Then there is a natural equivalence
    \begin{equation*}
        \fib((\id_{X},\ast) \colon X \to X \times Y ) \equivalent \Omega Y \period
    \end{equation*}
\end{lemma}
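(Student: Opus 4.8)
The plan is to present $\fib\bigl((\id_X,\ast)\colon X\to X\times Y\bigr)$ as a limit that visibly computes $\Omega Y$, using only that a product of pullback squares is again a pullback square.

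By definition $\fib\bigl((\id_X,\ast)\colon X\to X\times Y\bigr)$ is the pullback of the cospan $X\xrightarrow{(\id_X,\ast)} X\times Y \leftarrow \ast$ in $\dX_{\ast}$, where the second map is the basepoint of $X\times Y$. The first step is to observe that, under the canonical equivalences $X\simeq X\times\ast$ and $\ast\simeq\ast\times\ast$, this cospan is identified with the product of the two cospans
\begin{equation*}
    X\xrightarrow{\id_X} X \leftarrow \ast \andeq \ast \to Y \leftarrow \ast \comma
\end{equation*}
where the unlabelled maps are the respective basepoints: indeed the $\pr_1$-components recover the first cospan, while the $\pr_2$-components recover the second, since the $\pr_2$-component of $(\id_X,\ast)$ is null by definition.

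The second step is to take limits. Since a product of pullback squares is a pullback square, the pullback of the product cospan is the product of the pullbacks of the two factor cospans. The pullback of $X\xrightarrow{\id_X} X\leftarrow\ast$ is $\ast$, and the pullback of $\ast\to Y\leftarrow\ast$ is $\Omega Y$ by the definition of the loop object; hence
\begin{equation*}
    \fib\bigl((\id_X,\ast)\colon X\to X\times Y\bigr) \simeq \ast\times\Omega Y \simeq \Omega Y \period
\end{equation*}
Every identification above is an instance of a universal property, so the equivalence is natural in $X$ and $Y$.

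There is no real obstacle here; the argument is entirely formal, and the only point requiring a moment's thought is the identification of the defining cospan of the fiber with a product of cospans in the first step — equivalently, the fact that the square with corners $X$, $\ast$, $X\times Y$, $Y$ and maps $(\id_X,\ast)$, the terminal map $X\to\ast$, $\pr_2$, and the basepoint $\ast\to Y$ is cartesian (it is obtained by applying $X\times(-)$ to the evidently cartesian square with corners $\ast$, $\ast$, $Y$, $Y$). Given this, one may equivalently argue by pasting: the defining pullback square of $\fib\bigl((\id_X,\ast)\bigr)$ glued to that cartesian square along $X\times Y$ produces exactly the defining pullback square of $\Omega Y$.
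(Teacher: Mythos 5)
Your proof is correct. The paper itself offers no argument for this lemma—it is stated as an immediate observation preceding \Cref{lem:fibwedgetoprod}—so there is nothing to compare against; both of your routes (writing the cospan $X \xrightarrow{(\id_X,\ast)} X\times Y \leftarrow \ast$ as a product of the cospans $X \xrightarrow{\id_X} X \leftarrow \ast$ and $\ast \to Y \leftarrow \ast$ and using that limits commute with products, or equivalently pasting the defining pullback square of the fiber with the cartesian square obtained by applying $X\times(-)$ to the defining square of $\Omega Y$) are standard, formally valid justifications of exactly the kind the authors leave implicit.
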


\noindent Next, we prove the existence of the fiber sequence \eqref{seq:fibofcan}.

\begin{lemma}\label{lem:fibwedgetoprod}
    Let $ \dX $ be an $\infty$-category with universal pushouts and $ X,Y \in \dX_{\ast} $.
    Then there is a natural equivalence
    \begin{equation*}
        \fib(X \vee Y \to X \times Y) \equivalent \Sigma (\Omega Y \wedge \Omega X) \period
    \end{equation*}
\end{lemma}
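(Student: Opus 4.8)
The plan is to establish the equivalence --- equivalently, the fiber sequence \eqref{seq:fibofcan} --- by base-changing the pushout square defining the coproduct $X \vee Y$ and recognizing the result as the pushout square of \Cref{pushout}. First I would note that the pushout square $\ast \to X$, $\ast \to Y$, $X \to X \vee Y$, $Y \to X \vee Y$ defining $X \vee Y$ admits a natural transformation to the constant square with value $X \times Y$, whose components are the basepoint $\ast \to X \times Y$, the inclusions $(\id_X, \ast) \colon X \to X \times Y$ and $(\ast, \id_Y) \colon Y \to X \times Y$, and the comparison morphism $c \colon X \vee Y \to X \times Y$; commutativity of the three naturality squares is immediate from the construction of $c$. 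Base-changing the defining square along the basepoint $\ast \to X \times Y$ and using that pushouts in $\dX$ are universal --- equivalently, applying \Cref{Mather} to the cube whose bottom face is the defining square, whose four vertical faces are the evident pullback squares computing fibers over the basepoint of $X \times Y$, and whose top face is the base-changed square --- produces a pushout square
\begin{equation*}
    \begin{tikzcd}[sep=2em]
        \ast \times_{X \times Y} \ast \arrow[r] \arrow[d] & Y \times_{X \times Y} \ast \arrow[d] \\
        X \times_{X \times Y} \ast \arrow[r] & (X \vee Y) \times_{X \times Y} \ast
    \end{tikzcd}
\end{equation*}
in $\dX$.

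I would then identify the four corners. The bottom-right corner is $\fib(c)$ by definition. The top-left corner is $\ast \times_{X \times Y} \ast \equivalent \Omega(X \times Y) \equivalent \Omega Y \times \Omega X$, using that $\Omega$ preserves finite products. By \Cref{lem:fibofincintoprod}, the bottom-left corner is $X \times_{X \times Y} \ast = \fib((\id_X, \ast) \colon X \to X \times Y) \equivalent \Omega Y$, and symmetrically the top-right corner is $Y \times_{X \times Y} \ast \equivalent \Omega X$. Granting that under these identifications the two legs of the span $\Omega Y \leftarrow \Omega Y \times \Omega X \to \Omega X$ become the two projections, the displayed square is precisely the pushout square of \Cref{pushout} applied to the pair $(\Omega Y, \Omega X)$, whose fourth corner is $\Sigma(\Omega Y \wedge \Omega X)$; hence $\fib(X \vee Y \to X \times Y) \equivalent \Sigma(\Omega Y \wedge \Omega X)$. (By the symmetry of the smash product this may equally be written $\Sigma(\Omega X \wedge \Omega Y)$.)

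The step that requires genuine care is the claim that the legs of the span are the projections, so I will spell out the plan for that. The equivalence of \Cref{lem:fibofincintoprod} is induced by the projection $\pr_Y \colon X \times Y \to Y$: the square with top edge $(\id_X, \ast) \colon X \to X \times Y$, bottom edge $\ast \to Y$, left edge $X \to \ast$, and right edge $\pr_Y$ is a pullback, and passing to fibers of its two horizontal maps identifies $\fib((\id_X, \ast) \colon X \to X \times Y)$ with $\Omega Y$ via the map induced by $\pr_Y$. Therefore the left leg $\Omega(X \times Y) \to X \times_{X \times Y} \ast$ --- which is induced by the basepoint $\ast \to X$ --- is identified, after passing to $\Omega Y$, with the map induced on loop objects by $\pr_Y$, i.e., with the projection $\Omega Y \times \Omega X \to \Omega Y$; the right leg is treated the same way using $\pr_X$. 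Everything else is formal diagram chasing with pullbacks, pushouts, and the universality hypothesis, so this is the only place where real content enters, and I do not anticipate any serious obstacle beyond the bookkeeping.
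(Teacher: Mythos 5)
Your proposal is correct and is essentially the paper's own argument: the paper base-changes the pushout square defining $X \vee Y$ along the basepoint of $X \times Y$ (via the cube \eqref{diag:bigpullbackwedge}, with the vertical faces identified as pullbacks using \Cref{lem:fibofincintoprod} and the gluing lemma) and then applies \Cref{pushout} to the resulting span of projections. Your extra care in checking that the legs of the span are the projections is a point the paper leaves implicit, but the route is the same.
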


\begin{proof}   
    Write $ F \colonequals \fib(X \vee Y \to X \times Y) $.
    By \Cref{pushout}, it suffices to show that there is a pushout square
    \begin{equation*}
        \begin{tikzcd}[sep=2em]
           \Omega Y \times \Omega X \arrow[r, "\pr_2"] \arrow[d, "\pr_1"'] & \Omega X \arrow[d] \\
           \Omega Y \arrow[r] & F \period
        \end{tikzcd}
    \end{equation*}
    Consider the diagram
    \begin{equation}\label{diag:bigpullbackwedge}
        \begin{tikzcd}[column sep={8ex,between origins}, row sep={8ex,between origins}]
            \Omega Y \times \Omega X  \arrow[rr, "\pr_2"] \arrow[dd]  \arrow[dr, "\pr_1" description] & & \Omega X \arrow[dd] \arrow[dr] \\
            & \Omega Y \arrow[rr, crossing over] & & F \arrow[dd] \arrow[rr] & & \ast \arrow[dd] \\
            \ast \arrow[rr] \arrow[dr] & & Y \arrow[dr] \\
            & X \arrow[rr] \arrow[from=uu, crossing over] & & X \wedgesum Y \arrow[rr] & & X \times Y \period
        \end{tikzcd}
    \end{equation}
    The right-most vertical square in \eqref{diag:bigpullbackwedge} is a pullback by definition, and the front and right vertical squares in the cube appearing in \eqref{diag:bigpullbackwedge} are pullback squares by \Cref{lem:fibofincintoprod}.
    The back and left vertical squares in the cube appearing in \eqref{diag:bigpullbackwedge} are pullback squares by the Gluing Lemma for pullback squares.
    The bottom horizontal square in \eqref{diag:bigpullbackwedge} is a pushout square by definition, so the assumption that pushouts in $ \dX $ are universal implies that the top horizontal square is a pushout as well.
\end{proof}


\subsection{Reminder on group objects \& the Splitting Lemma}\label{subsec:groups}

In order to split the fiber sequence \eqref{seq:fibofcan} after taking loops, we need a few basic facts about \textit{group objects} (also called \textit{$ \Eone $-groups} or \textit{grouplike $ \Eone $-algebras}) in $ \infty $-categories, which we now review.
We begin with a little motivation for the definition of group objects as deloopings.
For the genesis of these ideas, we refer the reader to \cites{MR0339132}{MR232393}{MR353298}.
The reader should consult \cite[\S\S\HAsubseclink{4.1.2} \& \HAsubseclink{5.2.6}]{HA} for a modern treatment.

\begin{notation}
    We write $ \Deltab $ for the category of nonempty linearly ordered finite sets.
    As usual, given a simplicial object $ X \colon \Deltaop \to \dX $, we write $ X_n \colonequals X([n]) $ for the $ n $-simplices of $ X $.
\end{notation}

\noindent Recall that the bar construction is a fully faithful functor from the category of monoids to the category of simplicial sets.
The essential image of the bar construction consists of those simplicial sets $ X \colon \Deltaop \to \Set $ satisfying the following conditions:
\begin{enumerate}[(1)]
    \item\label{item:monoid.1} We have $ X_0 \equivalent \ast $.

    \item\label{item:monoid.2} \textit{Segal condition:} For each $ n > 0 $ and $ t \in [n] $, the square 
    \begin{equation*}
        \begin{tikzcd}[sep=2em]
            X([n]) \arrow[r] \arrow[d] & X(\{t < \cdots < n\}) \arrow[d] \\
            X(\{0 < \cdots < t\}) \arrow[r] & X(\{t\})
        \end{tikzcd}
    \end{equation*}
    is a pullback square.
\end{enumerate}
The face map $ d_1 \colon X_1 \times X_1 \equivalent X_2 \to X_1 $ provides a multiplication on $ X_1 $ with unit given by the degeneracy map $ s_0 \colon \ast \equivalent X_0 \to X_1 $.

Since groups form a full subcategory of the category of monoids, the bar construction also identifies the category of groups with a full subcategory of the category of simplicial sets.
For this it is better to use an alternative characterization of the existence of inverses: a monoid $ M $ is a group if and only if the \textit{shear maps}
\begin{align*}
    M \times M &\to M \times M \andeq M \times M \to M \times M\\ 
    (m,n) &\mapsto (m,mn) \phantom{\andeq} (m,n) \mapsto (mn,n)
\end{align*}
are bijections.
Translating this into simplicial sets one sees that the category of groups is equivalent to the full subcategory of $ \Fun(\Deltaop,\Set) $ spanned by the simplicial sets $ X $ satisfying \eqref{item:monoid.1}, \eqref{item:monoid.2}, and:
\begin{enumerate}[(1)]
   \setcounter{enumi}{2}

    \item\label{item:monoid.3} The induced squares
    \begin{equation*}
        \begin{tikzcd}[sep=2em]
            X(\{0 < 1 < 2\}) \arrow[r, "d_1"] \arrow[d, "d_2"'] & X(\{0 < 2\}) \arrow[d] \\
            X(\{0 < 1\}) \arrow[r] & X(\{0\})
        \end{tikzcd} 
        \andeq
         \begin{tikzcd}[sep=2em]
            X(\{0 < 1 < 2\}) \arrow[r, "d_1"] \arrow[d, "d_0"'] & X(\{0 < 2\}) \arrow[d] \\
            X(\{1 < 2\}) \arrow[r] & X(\{0\})
        \end{tikzcd} 
    \end{equation*}
    are pullback squares.
\end{enumerate}
We emphasize that condition \eqref{item:monoid.3} is not implied by the Segal condition \eqref{item:monoid.2}.

The following is the correct generalization of a group object in an arbitrary $ \infty $-category.
The point is to replace the Segal condition with a stronger condition that also encompasses condition \eqref{item:monoid.3}.
See \cites[Definitions \HTTthmlink{6.1.2.7} \& \HTTthmlink{7.2.2.1}]{HTT}.

\begin{definition}\label{def:group}
    Let $ \dX $ be an $ \infty $-category.
    A \defn{group object} in $ \dX $ is a simplicial object $ G \colon \Deltaop \to \dX $ such that:
    \begin{enumerate}[(\ref*{def:group}.1)]
        \item The object $ G_{0} $ is a terminal object of $ \dX $.

        \item For each object $ S \in \Deltaop $ and partition $ S = T \cup T' $ such that $ T \cap T' = \{t\} $ consists of a single element, the induced square
        \begin{equation*}
            \begin{tikzcd}[sep=2em]
                G(S) \arrow[r] \arrow[d] & G(T') \arrow[d] \\
                G(T) \arrow[r] & G(\{t\})
            \end{tikzcd}
        \end{equation*}
        is a pullback square in $ \dX $.
    \end{enumerate}
    In this case, we call $ G_1 \in \dX $ the \defn{underlying object} of $ G $.
    We often identify a group object by its underlying object.
    We write $ \Grp(\dX) \subset \Fun(\Deltaop,\dX) $ for the full subcategory spanned by the group objects.
\end{definition}

The key example of a group object is loops on a pointed object.
As a simplicial object, $ \Omega X $ can be written as the \textit{Čech nerve} of the basepoint $ \ast \to X $; since we use Čech nerves in \cref{subsec:connectedness}, we recall the definition here.

\begin{recollection}\label{cech-defn}
    Let $ \dX $ be an $ \infty $-category with pullbacks, and let $ e \colon W \to X $ be a morphism in $ \dX $.
    The \defn{Čech nerve} $ \Cech(e) $ of $ e $ is the simplicial object
    \begin{equation*}
        \begin{tikzcd}[sep=1.5em]
            \cdots \arrow[r, shift left=0.75ex] \arrow[r, shift right=0.75ex] \arrow[r, shift right=2.25ex] \arrow[r, shift left=2.25ex] & \displaystyle W \timeslimits_X W \timeslimits_X W \arrow[l] \arrow[l, shift left=1.5ex] \arrow[l, shift right=1.5ex] \arrow[r] \arrow[r, shift left=1.5ex] \arrow[r, shift right=1.5ex] & \displaystyle W \timeslimits_X W \arrow[l, shift left=0.75ex] \arrow[l, shift right=0.75ex] \arrow[r, shift left=0.75ex] \arrow[r, shift right=0.75ex] & W \arrow[l]
          \end{tikzcd}
    \end{equation*} 
    in $ \dX $.
    Here $ \Cech(e)_n $ is the $ (n+1) $-fold fiber product of $ W $ over $ X $, each degeneracy map is a diagonal morphism, and each face map is a projection.
    Note that the morphism $ e \colon W \to X $ defines a natural augmentation $ \Cech(e) \to X $.
\end{recollection}

\begin{lemma}\label{loop-monoid}
    Let $\dX$ be an $\infty$-category with finite limits and $ X\in \dX_{\ast} $.
    Then $\Omega X$ naturally admits the structure of a group object of $\dX$.
\end{lemma}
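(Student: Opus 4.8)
The plan is to recognize $\Omega X$ as the underlying object of the \v{C}ech nerve of the basepoint. Explicitly, I would set $G_{\bullet} \colonequals \Cech_{\bullet}(\ast \to X)$, the \v{C}ech nerve (\Cref{cech-defn}) of the morphism $\ast \to X$ that picks out the basepoint of $X$, regarded as a simplicial object of $\dX$. By construction $G_{0} = \ast$ is a terminal object of $\dX$, which is the second condition in \Cref{def:group}, and $G_{1} = \ast \times_{X} \ast = \Omega X$, so the whole content of the lemma is the first condition in \Cref{def:group}: that $G_{\bullet}$ is a \emph{groupoid object}.

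I see two ways to establish this. The quick route is to invoke the standard fact that the \v{C}ech nerve of an arbitrary morphism in an $\infty$-category with finite limits is a groupoid object (see \cite[\S 6.1.2]{HTT}); combined with $G_{0} \equivalent \ast$, this is exactly the assertion that $G_{\bullet}$ is a group object with underlying object $\Omega X$. The more self-contained route, in keeping with the elementary spirit of this note, is to verify the required pullback squares by hand. Given an integer $n \geq 0$ and a partition $[n] = S \cup S'$ with $S \cap S' = \{s\}$, one has $G_{\bullet}(\{s\}) \equivalent \ast$, and a repeated application of the pasting lemma for pullback squares identifies the $m$-th term of the \v{C}ech nerve as $\Cech_{m}(\ast \to X) \equivalent (\Omega X)^{\times m}$. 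Under these identifications the canonical comparison morphism
\begin{equation*}
    G_{n} \longrightarrow G_{\bullet}(S) \times_{G_{\bullet}(\{s\})} G_{\bullet}(S') \equivalent G_{\bullet}(S) \times G_{\bullet}(S')
\end{equation*}
becomes the evident equivalence $(\Omega X)^{\times n} \equivalent (\Omega X)^{\times(|S| - 1)} \times (\Omega X)^{\times(|S'| - 1)}$, using that $|S| + |S'| = n + 2$; this is precisely the claim that the square in question is a pullback.

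Naturality of the resulting group structure in $X \in \dX_{\ast}$ will be automatic, since the assignment $X \mapsto \Cech_{\bullet}(\ast \to X)$ is functorial in $X$. I do not expect a real obstacle here: once $\Omega X$ is identified with $\Cech_{1}(\ast \to X)$, the lemma is just a restatement of the fact that \v{C}ech nerves satisfy the Segal/groupoid conditions. The only mild bookkeeping is matching an arbitrary partition $[n] = S \cup S'$ with the corresponding factorization of iterated fiber products of $\ast$ over $X$, which is handled by the pasting lemma for pullback squares.
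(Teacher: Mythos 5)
Your proposal is correct and is essentially the paper's own argument: the paper also takes the \v{C}ech nerve $\upU_\bullet(X)$ of the basepoint $\ast \to X$, notes $\upU_0(X) \simeq \ast$ and $\upU_1(X) \simeq \Omega X$, and invokes \HTT{Proposition}{6.1.2.11} (your ``quick route'') to conclude it is a group object, with naturality coming from the functoriality of $X \mapsto \upU_\bullet(X)$ followed by the forgetful functor $\Grp(\dX) \to \dX_\ast$. Your optional hands-on verification of the partition condition is a reasonable supplement (just note that for non-interval partitions $[n] = S \cup S'$ you also need the evident permutation symmetry of the iterated fiber product, not only the pasting lemma), but it is not needed beyond the cited fact.
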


\begin{proof}
    Let $ \upU(X) $ denote the Čech nerve of the basepoint $\ast \to X$.
    Since $ \upU(X)_0 \simeq \ast $, \HTT{Proposition}{6.1.2.11} shows that the Čech nerve  $ \upU(X) $ is a group object of $\dX$.
    Since $ \upU(X)_1 \simeq \Omega X $, it follows
    that the loop functor $ \Omega \colon \dX_{\ast} \to \dX_{\ast} $ factors as the composite
    \begin{equation*}
        \begin{tikzcd}[sep=1.5em]
            \dX_{\ast} \arrow[r, "\upU"] & \Grp(\dX) \arrow[r] & \dX_{\ast}
        \end{tikzcd}
    \end{equation*}
    of the functor given by the assignment $ X \mapsto \upU(X) $ followed by the forgetful functor $ \Grp(\dX) \to \dX_{\ast} $.
\end{proof}

We leave the following Splitting Lemma as an amusing exercise for the reader.

\begin{lemma}[Splitting Lemma]\label{lem:split}
    Let $ \dX $ be an $\infty$-category with finite limits, let
    \begin{equation*}
        \begin{tikzcd}[sep=1.5em]
            A \arrow[r, "i"] & B \arrow[r, "p"] & C
        \end{tikzcd}
    \end{equation*}
    be a fiber sequence of group objects in $ \dX $, and write $ m \colon B \times B \to B $ for the multiplication on $ B $.
    For any section $ s \colon C \to B $ of $ p $ on the level of underlying pointed objects of $ \dX $, the composite
    \begin{equation*}
        \begin{tikzcd}[sep=1.5em]
            A \times C \arrow[r, "i \times s"] & B \times B \arrow[r, "m"] & B
        \end{tikzcd}
    \end{equation*}
    is an equivalence in $ \dX_{\ast} $.
\end{lemma}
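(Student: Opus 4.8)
The plan is to realize both $A \times C$ and $B$ as the pullback of one and the same cospan, in such a way that the morphism $\theta \colonequals m \circ (i \times s) \colon A \times C \to B$ is identified with a manifest equivalence. Since the forgetful functor $\dX_{\ast} \to \dX$ is conservative, it suffices to prove that $\theta$ is an equivalence in $\dX$ (it is a morphism of pointed objects because $m$, being the multiplication of a group object, is one).

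First, since $A \equivalent \fib(p)$, the product $A \times C$ is the pullback of the cospan $B \times C \xrightarrow{p \times \id_C} C \times C \xleftarrow{(\ast,\,\id_C)} C$, with comparison morphism $i \times \id_C \colon A \times C \to B \times C$. Next, invoke two shear equivalences: the endomorphism of $C \times C$ informally given by $(c_1, c_2) \mapsto (c_1 c_2, c_2)$ is an equivalence because $C$ is a group object, and the twisted shear $\sigma \colon B \times C \to B \times C$ informally given by $(b, c) \mapsto (b \cdot s(c), c)$ is an equivalence because $B$ is a group object (both are standard consequences of the group-object axioms). Transporting the cospan through these equivalences — post-composing the two legs into $C \times C$ with the first shear, and replacing the corner $B \times C$ via $\sigma$ — turns the cospan into $B \times C \xrightarrow{p \times \id_C} C \times C \xleftarrow{\Delta_C} C$ and turns the comparison morphism into $(\theta, \pr_2) \colon A \times C \to B \times C$. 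Hence $A \times C$ is the pullback of $p \times \id_C$ along the diagonal $\Delta_C$, via $(\theta, \pr_2)$.

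On the other hand, pasting the two evident pullback squares
\begin{equation*}
    \begin{tikzcd}[sep=2em]
        B \arrow[r, "{(\id_B,\, p)}"] \arrow[d, "p"'] & B \times C \arrow[r, "\pr_1"] \arrow[d, "p \times \id_C"] & B \arrow[d, "p"] \\
        C \arrow[r, "\Delta_C"'] & C \times C \arrow[r, "\pr_1"'] & C
    \end{tikzcd}
\end{equation*}
exhibits $B$ as the pullback of $p \times \id_C$ along $\Delta_C$ as well, via the graph morphism $(\id_B, p)$. Comparing the two descriptions yields an equivalence $A \times C \equivalent B$ carrying $(\theta, \pr_2)$ to $(\id_B, p)$; composing with $\pr_1 \colon B \times C \to B$, it carries $\theta$ to $\pr_1 \circ (\id_B, p) \equivalent \id_B$. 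Therefore $\theta$ is an equivalence.

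The step requiring the most care is the bookkeeping in the second paragraph: verifying that the two shear maps really are equivalences — which is exactly where the hypothesis that $B$ and $C$ are group objects enters, via the simplicial (Segal) description of group objects — and that they intertwine the two cospans as asserted. Conceptually nothing deep is involved. Alternatively, one can avoid the shears entirely and check directly that the morphism informally given by $b \mapsto (b \cdot s(p(b))^{-1},\, p(b))$ — which factors through $A \equivalent \fib(p)$ because $p$ carries $b \mapsto b \cdot s(p(b))^{-1}$ to a canonically nullhomotopic morphism — is a two-sided inverse of $\theta$, the two homotopies following from associativity and the inverse axiom for the group object $B$ together with $p \circ i \equivalent \ast$ and $p \circ s \equivalent \id_C$; the pullback argument has the advantage of making the needed coherences automatic.
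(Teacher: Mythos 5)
The paper does not actually supply a proof of this lemma---it is explicitly ``left as an amusing exercise for the reader''---so there is nothing to compare against; judged on its own, your argument is correct and complete in its essentials. Your strategy is the standard shearing argument, carried out carefully: you exhibit $A \times C$ and $B$ as pullbacks of the same cospan $B \times C \xrightarrow{p \times \id_C} C \times C \xleftarrow{\Delta_C} C$, identifying the comparison maps as $(\theta, \pr_2)$ and $(\id_B, p)$ respectively, so that $\theta$ is carried to $\id_B$. The identification of $A \times C$ with the pullback of the shifted cospan checks out: the square intertwining $\sigma$ and the shear on $C \times C$ commutes because $p$ is a morphism of \emph{group objects} (so $p \circ m_B \simeq m_C \circ (p \times p)$) and $p \circ s \simeq \id_C$, and the unit axiom turns $(\ast, \id_C)$ into $\Delta_C$; note that $s$ is only ever used as a pointed section, as the hypotheses demand. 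The one assertion you leave terse is that the twisted shear $\sigma \colon (b,c) \mapsto (b \cdot s(c), c)$ is an equivalence; the cleanest justification is that $\sigma$ is the base change along $s \colon C \to B$ (with respect to the second-factor projections) of the ordinary shear equivalence $B \times B \to B \times B$, $(b,b') \mapsto (b b', b')$, and pullback of an equivalence is an equivalence---a one-line fix that fits your ``standard consequences of the group-object axioms'' remark. Your closing alternative (constructing an explicit inverse using inverses in $B$) would indeed require managing coherences by hand, so the pullback formulation you chose is the right one.
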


We now prove the Fundamental Hilton--Milnor Splitting.

\begin{proof}[Proof of \cref{hilton-milnor}]
    By \Cref{lem:fibwedgetoprod,loop-monoid}, there is a fiber sequence
    \begin{equation}\label{diag:HMfib}
        \begin{tikzcd}[sep=1.5em]
            \Omega \fib(X\vee Y\to X\times Y)  \arrow[r] & \Omega (X\vee Y) \arrow[r] & \Omega X\times \Omega Y
        \end{tikzcd}
    \end{equation}
    of group objects of $ \dX $.
    Note that the map $ \Omega (X\vee Y) \to \Omega X\times \Omega Y $ has a section defined by the composite
    \begin{equation*}
        \begin{tikzcd}[sep=3.25em]
            \Omega X \times \Omega Y  \arrow[r, "\Omega i_1 \times \Omega i_2"] & \Omega (X\vee Y) \times \Omega(X\vee Y) \arrow[r, "m"] & \Omega(X\vee Y) \comma
        \end{tikzcd}
    \end{equation*}
    where $ i_1 \colon X \to X \wedgesum Y $ and $ i_2 \colon Y \to X \wedgesum Y $ are the coproduct insertions, and $ m $ is the multiplication coming from the group structure on $ \Omega(X\vee Y) $.
    By \Cref{lem:split} the fiber sequence \eqref{diag:HMfib} splits, so applying \Cref{lem:fibwedgetoprod} we see that there are equivalences
    \begin{align*}
        \Omega (X\vee Y) &\simeq \Omega X \times \Omega Y \times \Omega \fib(X\vee Y\to X\times Y) \\ 
        &\simeq \Omega X\times \Omega Y \times \Omega \Sigma (\Omega X\wedge \Omega Y) \period \qedhere
    \end{align*}
\end{proof}


\section{Connectedness \& the James Splitting}\label{sec:conncectednessssplit}

The purpose of this section is explain how to use a connectedness argument to show that if $ X $ is a pointed connected object of an $ \infty $-topos, then the comparison morphism 
\begin{equation*}
    c_{X} \colon \bigvee_{i = 1}^{\infty} \Sigma X^{\wedge i} \to \Sigma \Omega \Sigma X 
\end{equation*}
is an equivalence.
To do this, we start by reviewing the basics of connectedness in an $ \infty $-topos (\cref{subsec:connectedness}).
We also prove some basic connectedness results that we need in our proof of the metastable EHP sequence in \Cref{sec:EHP}.
\Cref{subsec:infsplitting} proves the infinite James and Hilton--Milnor Splittings for connected objects and explains how to deduce Wickelgren and William's James Splitting in motivic spaces over a perfect field from these results.


\subsection{Connectedness and the Blakers--Massey Theorem}\label{subsec:connectedness}

In this subsection, we review the basic properties of $ k $-truncated and $ k $-connected morphisms in an $ \infty $-topos.
We also recall the Blakers--Massey Theorem (\Cref{thm:MR4186137}) and Freudenthal Suspension Theorem (\Cref{cor:Freudenthal}) in an $ \infty $-topos, since our proof of the metastable EHP sequence in \cref{sec:EHP} relies on these results.

The reader interested in the details of the results reviewed here should consult \cites[\HTTsubsec{6.5.1}]{HTT}[\S3.3]{MR4186137} for connectedness results, and \cite{MR4186137} for the Blakers--Massey Theorem.

\begin{definition}\label{def:truncate}
    Let $ \dX $ be an $ \infty $-topos.
    For each integer $ k \geq -2 $, define \defn{$ k $-truncatedness} for morphisms in $ \dX $ recursively as follows.
    \begin{enumerate}[(\ref*{def:truncate}.1)]
        \item A morphism $ f $ is \defn{$ (-2) $-truncated} if $ f $ is an equivalence.

        \item For $ k \geq -1 $, a morphism $ f \colon X \to Y $ is \defn{$ k $-truncated} if the diagonal $ \Delta_f \colon X \to X \times_Y X $ is $ (k-1) $-truncated.
    \end{enumerate}
    An object $ X \in \dX $ is \defn{$ k $-truncated} if the unique morphism $ X \to \ast $ is $ k $-truncated.

    Write $ \dX_{\leq k} \subset \dX $ for the full subcategory spanned by the $ k $-truncated objects. 
    The inclusion $ \dX_{\leq k} \subset \dX $ admits a left adjoint which we denote by  $ \trun_{\leq k} \colon \dX \to \dX_{\leq k} $.
\end{definition}

\begin{example}
    Let $ \cC $ be a small $ \infty $-category equipped with a Grothendieck topology $ \tau $, and let $ k \geq -2 $ be an integer. 
    Then a sheaf $ \dF \in \Sh_{\tau}(\cC) $ of spaces on $ \cC $ with respect to $ \tau $ is $ k $-truncated if and only if $ \dF(c) $ is a $ k $-truncated space for every $ c \in \cC $.
    That is, $ \dF $ is $ k $-truncated if and only if $ \dF $ is a sheaf of $ k $-truncated spaces.
\end{example}

\begin{remark}
    If $ \dX $ is an $ \infty $-topos, then the full subcategory $ \dX_{\leq 0} $ spanned by the $ 0 $-truncated objects is an ordinary topos, i.e., a category of sheaves of sets on a Grothendieck site.
\end{remark}

\begin{recollection}
    Let $ \dX $ be an $ \infty $-topos.
    A morphism $ f \colon X \to Y $ in $ \dX $ is an \defn{effective
    epimorphism} if the augmentation $ \Cech(f) \to Y $ exhibits $ Y $ as the colimit of the Čech nerve of $ f $ (see \cref{cech-defn}).
    Equivalently, $ f $ is an effective epimorphism if and only if $ \trun_{\leq 0}(f) \colon \trun_{\leq 0}(X) \to \trun_{\leq 0}(Y) $ is an effective epimorphism in the ordinary topos $ \dX_{\leq 0} $ of $ 0 $-truncated objects of $ \dX $ \HTT{Proposition}{7.2.1.14}. 
\end{recollection}

\begin{example}
    A morphism $ f \colon X \to Y $ in the $ \infty $-topos $ \Spc $ of spaces is an effective epimorphism if and only if $ \uppi_0(f) \colon \uppi_0(X) \to \uppi_0(Y) $ is a surjection of sets.
\end{example}

\begin{definition}\label{def:conn}
    Let $ \dX $ be an $ \infty $-topos.
    For each integer $ k \geq -2 $, define \defn{$ k $-connectedness} for morphisms in $ \dX $ recursively as follows.
    \begin{enumerate}[(\ref*{def:conn}.1)]
         \item Every morphism is \defn{$ (-2) $-connected}.

        \item For $ k \geq -1 $, a morphism $ f \colon X \to Y $ is \defn{$ k $-connected} if $ f $ is an effective epimorphism and the diagonal $ \Delta_f \colon X \to X \times_Y X $ is $ (k-1) $-connected.
    \end{enumerate}
    A morphism $ f $ is \defn{$ \infty $-connected} if $ f $ is $ k $-connected for all $ k \geq -2 $.
    
    An object $ X \in \dX $ is \defn{$ k $-connected} if the unique morphism $ X \to \ast $ is $ k $-connected.
\end{definition}

\begin{remark}
    A morphism $ f $ is $ (-1) $-connected if and only if $ f $ is an effective epimorphism.
\end{remark}

\begin{definition}\label{defn:hypercomplete}
    An $ \infty $-topos $ \dX $ is \defn{hypercomplete} if every $ \infty $-connected morphism in $ \dX $ is an equivalence.
\end{definition}

\begin{warning}\label{warning:connectedness}
    Our conventions for connectedness follow those of Anel, Biedermann, Finster, and Joyal \cite[\S3.3]{MR4186137}.
    For $ k \geq 0 $, a homotopy type $ X $ is $ k $-connected in our sense if and only if $ X $ is $ k $-connected in the classical terminology \cites[\S6.7]{MR2456045}[p. 346]{MR1867354}[Chapter 10, \S4]{MR1702278}.
    In particular, $ X $ is $ 0 $-connected if and only if $ X $ is path-connected.
    This convention \textit{differs} from the classical one for maps: an $ n $-connected map in our sense is an $ (n+1) $-connected map in the classical sense.

    Comparing to Lurie's terminology \cite[\HTTsubsec{6.5.1}]{HTT}, an object or morphism is $ n $-connect\textit{ed} in our sense if and only if it is $ (n+1) $-connect\textit{ive} in Lurie's sense.
    One of the benefits of our terminological choice is that the constant factors in many connectedness estimates are eliminated (see \Cref{thm:MR4186137,cor:Freudenthal}).
\end{warning}

The following basic properties of $ k $-connected morphisms are proven in \cite[\HTTsubsec{6.5.1}]{HTT}.

\begin{prop}\label{prop:connectednessfacts}
    Let $ \dX $ be an $ \infty $-topos and $ k \geq -2 $ be an integer.
    Then:
    \begin{enumerate}[{\upshape (\ref*{prop:connectednessfacts}.1)}]
        \item\label{prop:connectednessfacts.0} The class of $ k $-connected morphisms in $ \dX $ is stable under composition.

        \item\label{prop:connectednessfacts.1} The class of $ k $-connected morphisms in $ \dX $ is stable under pushout along any morphism.

        \item\label{prop:connectednessfacts.2} The class of $ k $-connected morphisms in $ \dX $ is stable under pullback along any morphism.

         \item\label{prop:connectednessfacts.5} The class of $ k $-connected objects in $ \dX $ is stable under finite products.

         \item\label{prop:connectednessfacts.7} Given morphisms $ f \colon X \to Y $ and $ g \colon Y \to Z $ in $ \dX $ where $ f $ is $ k $-connected, the morphism $ g $ is $ k $-connected if and only if $ gf $ is $ k $-connected. 

        \item\label{prop:connectednessfacts.3} Given a morphism $ f \colon X \to Y $ in $ \dX $ with a section $ s \colon Y \to X $, the morphism $ f $ is $ (k+1) $-connected if and only if the section $ s $ is $ k $-connected.

        \item\label{prop:connectednessfacts.4} An object $ X \in \dX $ is $ k $-connected if and only if the $ k $-truncation $ \trun_{\leq k}(X) $ of $ X $ is terminal in $ \dX $.
    \end{enumerate}
\end{prop}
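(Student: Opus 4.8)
The plan is to deduce all seven clauses from the standard properties of connective morphisms in an $\infty$-topos recorded in \cite[\HTTsubsec{6.5.1}]{HTT}, transported through the dictionary of \Cref{warning:connectedness}: a morphism (resp.\ an object) is $k$-connected in our sense precisely when it is $(k+1)$-connective in Lurie's sense. The organizing fact behind the whole list is that, for each integer $k \geq -2$, the $k$-connected morphisms and the $k$-truncated morphisms of $\dX$ form an orthogonal factorization system on $\dX$ (for $k = -1$ this is the factorization of a morphism as an effective epimorphism followed by a monomorphism; the general statement is established in \cite[\HTTsubsec{6.5.1}]{HTT}). Applying this factorization to the terminal morphism $X \to \ast$ exhibits its $k$-connected part as the truncation unit $X \to \trun_{\leq k}(X)$, which is exactly the truncation criterion: $X$ is $k$-connected if and only if $\trun_{\leq k}(X)$ is terminal.

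With the factorization system in hand, the purely formal clauses cost nothing, since the left class of an orthogonal factorization system contains the equivalences and is stable under composition, under cobase change along an arbitrary morphism, and under the cancellation rule ``if $f$ and $gf$ both lie in the class, then so does $g$''; this gives stability under composition, stability under pushout, and the assertion that a morphism preceded by a $k$-connected morphism is $k$-connected iff the composite is. The one clause that genuinely uses the $\infty$-topos hypothesis --- namely that the colimits of $\dX$ are universal --- is stability under base change. I would argue this directly: for $k = -1$, base-changing the augmented \v{C}ech nerve $\Cech_\bullet(f) \to Y$ of an effective epimorphism $f$ along a morphism $Y' \to Y$ produces, by universality of colimits, the augmented \v{C}ech nerve of the base change $f'$ as a colimit cone, so $f'$ is again an effective epimorphism; the case of general $k$ then follows by induction, using that the relative diagonal of $f'$ is obtained from the relative diagonal of $f$ by base change. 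Stability of $k$-connected objects under finite products is then immediate: $X \times Y \to Y$ is a base change of $X \to \ast$, hence $k$-connected, and composing with the $k$-connected morphism $Y \to \ast$ finishes the job (the empty product $\ast$ being $k$-connected for trivial reasons). Finally, for the section criterion I would pass to the slice $\infty$-topos $\dX_{/Y}$, in which a section $s$ of $f \colon X \to Y$ presents $(X \to Y)$ as a pointed object and in which connectedness of a morphism agrees with the connectedness in $\dX$ of the underlying morphism (the forgetful functor $\dX_{/Y} \to \dX$ preserving and reflecting effective epimorphisms and commuting with the formation of relative diagonals); this reduces the criterion to the case $Y = \ast$, i.e.\ to the statement that a pointed object $Z$ of an $\infty$-topos is $(k+1)$-connected if and only if $\ast \to Z$ is $k$-connected, which is a short induction on $k$ (the relative diagonal of $\ast \to Z$ being the basepoint of $\Omega Z$) and is in any case part of \cite[\HTTsubsec{6.5.1}]{HTT}.

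The one point demanding care is sequencing the inductions on $k$ so as to avoid circularity, since several inductive steps invoke other clauses at level $k-1$: a safe order is to establish stability under base change first (its $k=-1$ instance is induction-free, and its inductive step uses only itself), then stability under composition and the cancellation statement, then stability under pushout, then stability under finite products, and the section and truncation criteria last. Since every clause is individually available in \cite[\HTTsubsec{6.5.1}]{HTT} after the index shift of \Cref{warning:connectedness}, however, the most economical write-up simply cites it --- which, as the phrasing of the statement suggests, is the route we take.
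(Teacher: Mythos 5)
Your proposal is correct and lands exactly where the paper does: the paper gives no internal argument and simply cites \cite[\HTTsubsec{6.5.1}]{HTT} after the index shift of \Cref{warning:connectedness}, which is precisely the citation you close with. The supplementary material you sketch --- the ($k$-connected, $k$-truncated) factorization system for the formal clauses, universality of colimits for stability under base change, and the slice-topos reduction of the section criterion to pointed objects --- is a sound reconstruction of Lurie's own arguments rather than a genuinely different route, and your care about ordering the inductions on $k$ is exactly the right thing to worry about if one were to write the proof out in full.
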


Since the $ k $-truncation functor $ \trun_{\leq k} \colon \dX \to \dX $ preserves filtered colimits, from \enumref{prop:connectednessfacts}{4} we deduce:

\begin{corollary}\label{cor:filteredcolimkconn}
    Let $ \dX $ be an $ \infty $-topos and $ k \geq -2 $ be an integer.
    Then the class of $ k $-connected objects of $ \dX $ is stable under filtered colimits.
\end{corollary}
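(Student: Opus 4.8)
The plan is to reduce everything to the characterization of $k$-connectedness in terms of truncation provided by \enumref{prop:connectednessfacts}{4}: an object $Z \in \dX$ is $k$-connected if and only if its $k$-truncation $\trun_{\leq k}(Z)$ is a terminal object of $\dX$. So let $\dI$ be a filtered $\infty$-category and $F \colon \dI \to \dX$ a diagram all of whose values $F(i)$ are $k$-connected, and set $X \colonequals \colim_{i \in \dI} F(i)$. By \enumref{prop:connectednessfacts}{4}, it suffices to prove that $\trun_{\leq k}(X)$ is terminal.

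Next I would invoke the fact recalled just before the statement, namely that $\trun_{\leq k} \colon \dX \to \dX$ preserves filtered colimits; this furnishes a natural equivalence $\trun_{\leq k}(X) \simeq \colim_{i \in \dI} \trun_{\leq k}(F(i))$. Since each $F(i)$ is $k$-connected, \enumref{prop:connectednessfacts}{4} shows that each $\trun_{\leq k}(F(i))$ is terminal, so the diagram $i \mapsto \trun_{\leq k}(F(i))$ is equivalent to the constant diagram at $\ast$, and therefore $\trun_{\leq k}(X) \simeq \colim_{i \in \dI} \ast$.

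Finally I would observe that a filtered colimit of the constant diagram on a terminal object is again terminal: since a filtered $\infty$-category $\dI$ is weakly contractible, the projection $\dI \to \ast$ is colimit-cofinal, whence $\colim_{i \in \dI} \ast \simeq \ast$. This shows $\trun_{\leq k}(X)$ is terminal, so $X$ is $k$-connected, as desired. I do not expect a genuine obstacle here: the content of the statement is entirely carried by the truncation description of connectedness from \enumref{prop:connectednessfacts}{4} and the commutation of $\trun_{\leq k}$ with filtered colimits, both already at our disposal; the remark about colimits of constant diagrams over filtered (hence weakly contractible) index categories is the only remaining, and purely formal, ingredient.
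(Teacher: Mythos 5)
Your proof is correct and is essentially the paper's own argument: the paper deduces the corollary in one line from the fact that $\trun_{\leq k}$ preserves filtered colimits together with \enumref{prop:connectednessfacts}{4}, exactly the two ingredients you use. Your additional observation that a filtered colimit of terminal objects is terminal (via weak contractibility of the index) is the same implicit final step, just spelled out.
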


In the $ \infty $-topos of spaces, the following connectedness estimates are usually done by appealing to cell structures.
Such arguments are unavailable in an arbitrary $ \infty $-topos, so we deduce these connectedness estimates from \Cref{prop:connectednessfacts}.

\begin{prop}\label{prop:smashconn}
    Let $ \dX $ be an $ \infty $-topos, $ X,Y \in \dX_{\ast} $ pointed objects, and $ k,\el \geq 0 $ integers.
    If $ X $ is $ k $-connected and $ Y $ is $ \el $-connected, then:
    \begin{enumerate}[{\upshape (\ref*{prop:smashconn}.1)}]
        \item\label{prop:smashconn.1} The suspension $ \Sigma X $ is $ (k+1) $-connected.

        \item\label{prop:smashconn.2} The coproduct insertion $ X \to X \wedgesum Y $ is $ (\el-1) $-connected.

        \item\label{prop:smashconn.3} The natural morphism $ X \wedgesum Y \to X \times Y $ is $ (k + \el) $-connected.

        \item\label{prop:smashconn.4} The smash product $ X \smashprod Y $ is $ (k + \el + 1) $-connected.

        \item\label{prop:smashconn.5} For each postive integer $ n $, the $ n $-fold smash product $ X^{\smashprod n} $ is $ (n(k+1)-1) $-connected.
    \end{enumerate}
\end{prop}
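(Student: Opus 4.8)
The plan is to establish the five connectedness estimates in order, using only the formal properties of $k$-connected morphisms recorded in \Cref{prop:connectednessfacts} together with the pushout/pullback descriptions of $\Sigma$, $\wedgesum$, $\times$, and $\smashprod$ from the Recollections. For \enumref{prop:smashconn}{1}, I would use the pushout square defining $\Sigma X$: since $X$ is $k$-connected, the morphism $X \to \ast$ is $k$-connected, and so its pushout $\ast \to \Sigma X$ along $X \to \ast$ is $k$-connected by \enumref{prop:connectednessfacts}{1}; this morphism admits the section... wait, rather: $\ast \to \Sigma X$ is $k$-connected and is a section of $\Sigma X \to \ast$, so by \enumref{prop:connectednessfacts}{6} the object $\Sigma X$ is $(k+1)$-connected. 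For \enumref{prop:smashconn}{2}, apply \Cref{lem:wedgepushout}: the coproduct insertion $X \to X \wedgesum Y$ is the pushout of $Y \to \ast$ along $Y \to X \wedgesum Y$ ... more precisely, the square in \Cref{lem:wedgepushout} exhibits $X \to X \wedgesum Y$ ... hmm, I'd instead note that $\ast \to Y$ is $(\el-1)$-connected (as $Y$ is $\el$-connected, $Y \to \ast$ is $\el$-connected, hence its section $\ast \to Y$ is $(\el - 1)$-connected by \enumref{prop:connectednessfacts}{6}), and this is the pushout of $\ast \to Y$ ... the cleanest route: $X \to X \wedgesum Y$ is a cobase change of $\ast \to Y$, so it is $(\el-1)$-connected by \enumref{prop:connectednessfacts}{1}.

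For \enumref{prop:smashconn}{3}, consider the commutative triangle $X \wedgesum Y \to X \times Y$ obtained by composing $X \to X \times Y$ (which is $(k+\el)$-connected? no—) with something; actually I would build the comparison as a two-step composite. The morphism $X \wedgesum Y \to X \times Y$ fits in a factorization through $X \times Y$ via the pushout-product structure; concretely, one has the pushout square
\begin{equation*}
    \begin{tikzcd}[sep=2em]
        X \wedgesum Y \arrow[r] \arrow[d] & X \times Y \arrow[d] \\
        Y \arrow[r] & \ast \ \text{(or an intermediate object)}
    \end{tikzcd}
\end{equation*}
wait—the right approach uses the two evident maps and \enumref{prop:connectednessfacts}{2}: write the comparison $X \wedgesum Y \to X \times Y$ as the composite $X \wedgesum Y \to X \times \ast \wedgesum \ast \times Y \hookrightarrow \dots$ — this is getting muddled. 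Let me restart the plan for (3): factor $X \wedgesum Y \to X\times Y$ through the intermediate object obtained as $(X \times \ast) \cup_{(\ast \times \ast)} (\ast \times Y)$, but since $\ast$ is terminal that is just $X \wedgesum Y$ again. The honest approach is: the fiber of $X \to X\times Y$ (the inclusion $(\id_X,\ast)$) is $\Omega Y$ by \Cref{lem:fibofincintoprod}, which is $(\el-1)$-connected; dually, pushout-along arguments give that $X \wedgesum Y \to X \times Y$ has, after base change, controllable connectivity. I expect this is the technically fiddliest estimate, and I would follow \cite[\S3.3]{BlakersMassey}: use that $X \wedgesum Y \to X \times Y$ is the pushout-product (in the sense of the join/wedge) of $\ast \to X$ and $\ast \to Y$, each of which is $(k-1)$- resp. $(\el-1)$-connected, and cite or re-derive the fact that the pushout-product of an $a$-connected and a $b$-connected map is $(a+b+2)$-connected—giving $(k-1)+(\el-1)+2 = k+\el$. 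The remaining parts follow formally: \enumref{prop:smashconn}{4} from the pushout square defining $X \smashprod Y$ as the cofiber of the $(k+\el)$-connected map $X\wedgesum Y \to X\times Y$ (cobase change along $X \wedgesum Y \to \ast$ keeps it $(k+\el)$-connected, and $X\smashprod Y$ is the target of a $(k+\el)$-connected map from $\ast$, hence $(k+\el+1)$-connected by \enumref{prop:connectednessfacts}{6} again — noting $X\times Y$ is $\min(k,\el)\ge 0$-connected so this is consistent), and \enumref{prop:smashconn}{5} by induction on $n$: the base case $n=1$ is trivial, and if $X^{\smashprod n}$ is $(n(k+1)-1)$-connected then $X^{\smashprod(n+1)} = X \smashprod X^{\smashprod n}$ is $(k + (n(k+1)-1) + 1) = (n+1)(k+1)-1$-connected by \enumref{prop:smashconn}{4}.

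The main obstacle will be \enumref{prop:smashconn}{3}: the pushout-product connectivity bound is the one genuinely nontrivial input, and I need to decide whether to prove it directly (by an induction on $k+\el$ using the recursive definition of connectedness and stability under pullback, tracking diagonals) or to cite it from \cite{BlakersMassey}. I would lean toward a short direct induction: for the base case $k = \el = 0$, both $\ast \to X$ and $\ast \to Y$ are $(-1)$-connected (effective epi), so I must show $X \wedgesum Y \to X \times Y$ is $0$-connected, i.e. an effective epi whose diagonal is $(-1)$-connected; for the inductive step, analyze the diagonal of $X\wedgesum Y \to X \times Y$ via \Cref{lem:3by3}-type manipulations and \enumref{prop:connectednessfacts}{2}. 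Once (3) is in hand, everything else is bookkeeping with \Cref{prop:connectednessfacts}.
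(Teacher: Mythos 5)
Your proposal is correct and, despite the false starts in the write-up, lands on essentially the same argument as the paper: (1), (2) and (4) via stability of $k$-connected morphisms under cobase change together with the section criterion of \enumref{prop:connectednessfacts}{3}, (3) by viewing $X \wedgesum Y \to X \times Y$ as the pushout-product of the basepoints $\ast \to X$ and $\ast \to Y$ and invoking the pushout-product connectivity estimate from \cite{BlakersMassey} (the paper cites Corollary 3.3.7(4) there rather than re-deriving it), and (5) by induction from (4).
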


\begin{proof}
    For \enumref{prop:smashconn}{1} note that since $ k $-connected morphisms are stable under pushout \enumref{prop:connectednessfacts}{1}, the definition of the suspension $ \Sigma X $ as a pushout shows that the basepoint $ \ast \to \Sigma X $ is $ k $-connected.
    Hence $ \Sigma X $ is $ (k + 1) $-connected \enumref{prop:connectednessfacts}{3}.

    Second, \enumref{prop:smashconn}{2} follows from the fact that the basepoint $ \ast \to Y $ is $ (\el-1) $-connected \enumref{prop:connectednessfacts}{3}, combined with the fact that $ (\el - 1) $-connected morphisms are stable under pushout \enumref{prop:connectednessfacts}{1}.
    Third, \enumref{prop:smashconn}{3} follows from the fact that the basepoints $ \ast \to X $ and $ \ast \to Y $ are $ (k-1) $-connected and $ (\el-1) $-connected \enumref{prop:connectednessfacts}{3}, respectively, and a general fact about pushout-products of connected morphisms \cite[Corollary 3.3.7(4)]{MR4186137}.

    Now we prove \enumref{prop:smashconn}{4}.
    Since $ (k+\el) $-connected morphisms are stable under pushout \enumref{prop:connectednessfacts}{1}, by \enumref{prop:smashconn}{1} and the pushout square   
    \begin{equation*}
        \begin{tikzcd}[sep=2em]
            X \wedgesum Y \arrow[r] \arrow[d] \arrow[dr, phantom, very near end, "\ulcorner", xshift=0.25em, yshift=-0.25em] & X \times Y \arrow[d] \\
            \ast \arrow[r] & X \smashprod Y
        \end{tikzcd}
    \end{equation*} 
    defining the smash product $ X \smashprod Y $, we see that the basepoint $ \ast \to X \smashprod Y $ is $ (k+\el) $-connected.
    Hence $ X \smashprod Y $ is $ (k+\el+1) $-connected \enumref{prop:connectednessfacts}{3}.

    Finally, \enumref{prop:smashconn}{5} follows from \enumref{prop:smashconn}{4} by induction.
\end{proof}

Now we record a convenient fact about the interaction between connectedness and pullbacks that we need in out proof of the metastable EHP sequence.

\begin{prop}\label{prop:cubeconn}
    Let $ \dX $ be an $ \infty $-topos, $ \el \geq -2 $ be an integer, and 
    \begin{equation*}
        \begin{tikzcd}[sep=2em]
            A \arrow[r, "f"] \arrow[d, "a"'] & C \arrow[d, "c" description] & B \arrow[d, "b"] \arrow[l, "g"'] \\
            A' \arrow[r, "f'"'] & C' & B' \arrow[l, "g'"]
        \end{tikzcd}
    \end{equation*} 
    be a commutative diagram in $ \dX $.
    If $ a $ and $ b $ are $ \el $-connected and $ c $ is $ (\el+1) $-connected, then the induced morphism on pullbacks $ a \times_c b \colon A \times_C B \to A' \times_{C'} B' $ is $ \el $-connected.
\end{prop}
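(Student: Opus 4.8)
The plan is to factor the induced morphism $\phi \colon A \times_C B \to A' \times_{C'} B'$ into a short composite, each arrow of which is manifestly a base change of one of $a$, $b$, or the relative diagonal $\Delta_c \colon C \to C \times_{C'} C$, and then to conclude using the stability of $\el$-connected morphisms under composition \enumref{prop:connectednessfacts}{0} and under pullback \enumref{prop:connectednessfacts}{2}, together with the fact that $(\el+1)$-connectedness of $c$ forces $\Delta_c$ to be $\el$-connected (immediate from \Cref{def:conn}). Concretely, form the auxiliary pullback $A \times_{C'} B$ over $C'$ using the structure maps $c \circ f \colon A \to C'$ and $c \circ g \colon B \to C'$ (which agree with $f' \circ a$ and $g' \circ b$ by commutativity of the given diagram); then $\phi$ factors as
\begin{equation*}
    A \times_C B \xrightarrow{\ p\ } A \times_{C'} B \xrightarrow{\ q\ } A' \times_{C'} B' \comma
\end{equation*}
where $p$ is the canonical comparison morphism (a pair of maps agreeing over $C$ agrees a fortiori over $C'$) and $q = a \times_{C'} b$.

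The morphism $q$ is the easy half. It factors further as $A \times_{C'} B \to A' \times_{C'} B \to A' \times_{C'} B'$, and a routine check of universal properties identifies the first arrow with the base change of $a$ along $A' \times_{C'} B \to A'$ and the second with the base change of $b$ along $A' \times_{C'} B' \to B'$. Both are therefore $\el$-connected by \enumref{prop:connectednessfacts}{2}, and hence so is $q$ by \enumref{prop:connectednessfacts}{0}.

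The morphism $p$ is where the hypothesis on $c$ enters, and it is the only step requiring a genuine (if brief) computation. I would identify $p$ with the base change of $\Delta_c \colon C \to C \times_{C'} C$ along the morphism $A \times_{C'} B \to C \times_{C'} C$ whose two components are $f \circ \pr_1$ and $g \circ \pr_2$; unwinding the universal property of the pullback shows its domain is precisely $A \times_C B$ and that the projection is $p$. Since $c$ is $(\el+1)$-connected, \Cref{def:conn} gives that $\Delta_c$ is $\el$-connected, so $p$ is $\el$-connected by \enumref{prop:connectednessfacts}{2}; combining this with the statement for $q$ via \enumref{prop:connectednessfacts}{0} finishes the argument. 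The only real obstacle, and a mild one, is this identification of $p$ as a base change of $\Delta_c$: one must choose the structure maps consistently when forming the auxiliary pullbacks $A \times_{C'} B$ and $C \times_{C'} C$ so that the factorization genuinely glues. Everything else is formal, given \Cref{prop:connectednessfacts}.
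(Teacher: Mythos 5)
Your proof is correct, and it follows the same factorization chain as the paper's proof, $A \times_C B \to A \times_{C'} B \to A' \times_{C'} B \to A' \times_{C'} B'$, with the last two legs handled identically (base changes of $a$ and $b$, then composition). Where you diverge is the first leg: you identify the comparison morphism $p \colon A \times_C B \to A \times_{C'} B$ as a base change of the relative diagonal $\Delta_c \colon C \to C \times_{C'} C$ along $(f\pr_1, g\pr_2)$, and then read off its $\el$-connectedness directly from the recursive \Cref{def:conn} (the case $\el = -2$ being vacuous). The paper instead exhibits $p$ as a base change of the section $(\id_A, f) \colon A \to A \times_{C'} C$ of the projection $A \times_{C'} C \to A$; that projection is a base change of $c$, hence $(\el+1)$-connected, so the section is $\el$-connected by \enumref{prop:connectednessfacts}{3}. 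Both arguments are sound. Yours is somewhat more economical in its inputs, since it bypasses the section criterion entirely and leans only on pullback stability, composition, and the definition of connectedness; the price is the identification $(A \times_{C'} B) \times_{C \times_{C'} C} C \simeq A \times_C B$, which is standard but does require the careful bookkeeping of structure maps you flag. The paper's route trades that identification for an explicit grid of pullback squares, which makes the verification purely diagrammatic at the cost of invoking one more item of \Cref{prop:connectednessfacts}.
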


\begin{proof}
    Since $ \el $-connected morphisms are stable under composition, by factoring the induced morphism $ A \times_C B \to A' \times_{C'} B' $ as a composite of induced morphisms
    \begin{equation*}
        \begin{tikzcd}[sep=3em]
            A \times_C B  \arrow[r, "a\, \times_{c} \id"] & A' \times_{C'} B \arrow[r, "\id \times_{\id} b"] & A' \times_{C'} B' \comma
        \end{tikzcd}
    \end{equation*}
    it suffices to prove the claim in the special case $ B = B' $ and the morphism $ b \colon B \to B' $ is the identity.
    To prove the claim when $ b $ is the identity, first write $ (\id_{A},f) \colon A \to A \times_{C'} C $ for the section of the projection $ A \times_{C'} C \to A $ induced by the commutative square
    \begin{equation*}
        \begin{tikzcd}[sep=2em]
            A \arrow[d, equals] \arrow[r, "f"] & C \arrow[d, "c"] \\
            A \arrow[r, "f'a"'] & C' \period
        \end{tikzcd}
    \end{equation*}
    Consider the following commutative diagram of pullback squares
    \begin{equation*}
        \begin{tikzcd}[sep=2em]
            A \times_C B \arrow[r] \arrow[d] \arrow[dr, phantom, very near start, "\lrcorner", xshift=-0.5em] & A \times_{C'} B \arrow[r] \arrow[d] \arrow[dr, phantom, very near start, "\lrcorner", xshift=-0.5em] & A' \times_{C'} B \arrow[r] \arrow[d] \arrow[dr, phantom, very near start, "\lrcorner", xshift=-0.5em] & B \arrow[d, "g"] \\
            A \arrow[r, "{(\id_{A},f)}"'] & A \times_{C'} C \arrow[r] \arrow[d] \arrow[dr, phantom, very near start, "\lrcorner", xshift=-0.5em] & A' \times_{C'} C \arrow[r] \arrow[d] \arrow[dr, phantom, very near start, "\lrcorner", xshift=-0.5em] & C \arrow[d, "c"] \\
            & A \arrow[r, "a"'] & A' \arrow[r, "f'"'] & C' \comma
        \end{tikzcd}
    \end{equation*}
    and notice that the composite middle horizontal map $ A \to C $ is $ f $.
    Since $ c \colon C \to C' $ is $ (\el + 1) $-connected, the projection $ A \times_{C'} C \to A $ is $ (\el + 1) $-connected \enumref{prop:connectednessfacts}{2}.
    Hence the section $ (\id_{A},f) \colon A \to A \times_{C'} C $ is $ \el $-connected \enumref{prop:connectednessfacts}{3}.
    Consequently, the induced morphism $ A \times_C B \to A \times_{C'} B $ is $ \el $-connected.
    Now note that since $ a \colon A \to A' $ is $ \el $-connected, the induced morphism $ A \times_{C'} B \to A' \times_{C'} B $ is $ \el $-connected.
    Hence the composite morphism $ A \times_C B \to A' \times_{C'} B $ is $ \el $-connected, as desired.
\end{proof}

\noindent In particular, \Cref{prop:cubeconn} shows that the class of $ \el $-connected morphisms in an $ \infty $-topos is closed under finite products.
Setting $ B = B' = \ast $ in \Cref{prop:cubeconn} we deduce:

\begin{corollary}\label{cor:fibconn}
    Let $ \dX $ be an $ \infty $-topos, $ \el \geq -2 $ be an integer, and  
    \begin{equation*}
        \begin{tikzcd}[sep=2em]
            A \arrow[r, "f"] \arrow[d, "a"'] & C \arrow[d, "c"] \\
            A' \arrow[r, "f'"'] & C'
        \end{tikzcd}
    \end{equation*} 
    be a commutative square in $ \dX $.
    If $ a $ is $ \el $-connected and $ c $ is $ (\el+1) $-connected, then for every point $ x \colon \ast \to C $, the induced morphism $ \fib_x(f) \to \fib_{cx}(f') $ on fibers is $ \el $-connected.
\end{corollary}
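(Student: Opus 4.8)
The plan is to deduce \Cref{cor:fibconn} directly from \Cref{prop:cubeconn} by recognizing fibers as pullbacks along points. Given the commutative square in the statement together with a point $ x \colon \ast \to C $, I would form the commutative diagram
\begin{equation*}
    \begin{tikzcd}[sep=2em]
        A \arrow[r, "f"] \arrow[d, "a"'] & C \arrow[d, "c" description] & \ast \arrow[d, "\id_{\ast}"] \arrow[l, "x"'] \\
        A' \arrow[r, "f'"'] & C' & \ast \arrow[l, "cx"']
    \end{tikzcd}
\end{equation*}
in $ \dX $, whose left-hand square is the given square and whose right-hand square commutes since $ cx = (cx) \circ \id_{\ast} $. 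By definition the pullback of the top row is $ \fib_{x}(f) = A \times_{C} \ast $ and the pullback of the bottom row is $ \fib_{cx}(f') = A' \times_{C'} \ast $, and the morphism induced on pullbacks by the vertical maps is precisely the natural comparison $ \fib_{x}(f) \to \fib_{cx}(f') $ whose connectedness we wish to control.

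Next I would verify the hypotheses of \Cref{prop:cubeconn} for this diagram: the leftmost vertical morphism $ a $ is $ \el $-connected and the middle vertical morphism $ c $ is $ (\el+1) $-connected by assumption, while the rightmost vertical morphism $ \id_{\ast} $ is an equivalence, hence $ \infty $-connected and in particular $ \el $-connected. Applying \Cref{prop:cubeconn} then yields immediately that $ \fib_{x}(f) \to \fib_{cx}(f') $ is $ \el $-connected, which is the assertion.

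The only point requiring any attention — and it is entirely routine — is the bookkeeping identifying the two iterated pullbacks appearing in \Cref{prop:cubeconn} with the fibers $ \fib_{x}(f) $ and $ \fib_{cx}(f') $, and checking that the map \Cref{prop:cubeconn} produces is the expected map on fibers; there is no genuine obstacle, since the substantive work has already been carried out in the proof of \Cref{prop:cubeconn}.
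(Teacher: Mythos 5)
Your proposal is correct and is exactly the paper's argument: the paper deduces \Cref{cor:fibconn} by setting $B = B' = \ast$ in \Cref{prop:cubeconn}, with $b = \id_{\ast}$ an equivalence and hence $\el$-connected, just as you do. The identification of the resulting pullbacks with $\fib_x(f)$ and $\fib_{cx}(f')$ is indeed routine, so there is nothing to add.
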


We conclude this subsection by recalling the Blakers--Massey and Freudenthal Suspension Theorems in the setting of $ \infty $-topoi.

\begin{theorem}[{Blakers--Massey \cite[Corollary 4.3.1]{MR4186137}}]\label{thm:MR4186137}
    Let $ \dX $ be an $ \infty $-topos and let 
    \begin{equation*}
        \begin{tikzcd}[sep=2em]
            A \arrow[r, "g"] \arrow[d, "f"'] \arrow[dr, phantom, very near end, "\ulcorner", xshift=0.25em, yshift=-0.25em] & C \arrow[d] \\
            B \arrow[r] & D
        \end{tikzcd}
    \end{equation*} 
    be a pushout square in $ \dX $.
    If $ f $ is $ k $-connected and $ g $ is $ \el $-connected, then the induced morphism $ A \to B \times_D C $ is $ (k + \el) $-connected.
\end{theorem}

\noindent As in the classical setting, applying the Blakers--Massey Theorem to the pushout defining the suspension immediately implies the Freudenthal Suspension Theorem.

\begin{corollary}[Freudenthal Suspension Theorem]\label{cor:Freudenthal}
    Let $ \dX $ be an $ \infty $-topos, and $ X \in \dX_{\ast} $ a pointed $ k $-connected object.
    Then the unit morphism $ X \to \Omega\Sigma X $ is $ 2k $-connected.
\end{corollary}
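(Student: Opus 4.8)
The plan is to apply the Blakers--Massey Theorem (\Cref{thm:BlakersMassey}) to the pushout square
\begin{equation*}
    \begin{tikzcd}[sep=2em]
        X \arrow[r] \arrow[d] \arrow[dr, phantom, very near end, "\ulcorner", xshift=0.25em, yshift=-0.25em] & \ast \arrow[d] \\
        \ast \arrow[r] & \Sigma X
    \end{tikzcd}
\end{equation*}
defining the suspension. Since $X$ is $k$-connected, the unique morphism $X \to \ast$ is $k$-connected by definition, so both legs of the span $\ast \leftarrow X \rightarrow \ast$ are $k$-connected morphisms. \Cref{thm:BlakersMassey} then shows that the induced comparison morphism $X \to \ast \times_{\Sigma X} \ast$ is $(k+k) = 2k$-connected.

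It remains to identify the pullback $\ast \times_{\Sigma X} \ast$ with the loop object $\Omega\Sigma X$, and the comparison morphism $X \to \ast \times_{\Sigma X} \ast$ with the unit of the loop--suspension adjunction. The two morphisms $\ast \to \Sigma X$ appearing in the pushout square are canonically equivalent (as recorded in the proof of \Cref{lem:pr2pushout}), so the pullback of this cospan is precisely the loop object of $\Sigma X$ based at its canonical basepoint; unwinding the relevant universal properties then shows that the map $X \to \Omega\Sigma X$ classified by the square above is exactly the unit morphism. This identification is entirely formal, so the only mathematical content of the argument is the single invocation of Blakers--Massey, and there is no real obstacle here. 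The one place to proceed with a modicum of care is the bookkeeping of connectedness indices; but the conventions fixed in \Cref{warning:connectedness} are designed precisely so that no stray additive constants intervene, which is what lets us state the conclusion cleanly as ``$2k$-connected''.
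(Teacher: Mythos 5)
Your proposal is correct and is exactly the paper's argument: the paper proves \Cref{cor:Freudenthal} by noting that applying the Blakers--Massey Theorem to the pushout square defining $\Sigma X$ (both of whose legs $X \to \ast$ are $k$-connected) immediately yields that the comparison map $X \to \ast \times_{\Sigma X} \ast \simeq \Omega\Sigma X$ is $2k$-connected. The identification of this comparison map with the unit is the same formal step the paper leaves implicit.
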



\subsection{The infinite James and Hilton--Milnor Splittings}\label{subsec:infsplitting}

We now explain how to use a connectedness argument to show that if $ X $ is a pointed $ 0 $-connected object of an $ \infty $-topos, then the comparison morphism 
\begin{equation*}
    c_{X} \colon \bigvee_{i = 1}^{\infty} \Sigma X^{\wedge i} \to \Sigma \Omega \Sigma X 
\end{equation*}
introduced in \Cref{ntn:Jamescomparison} is an equivalence (\Cref{prop:infiniteJamestopos}).
The James Splitting gives us access to generalized Hopf invariants in this very general setting, and implies the stable Snaith Splitting for $ \Omega \Sigma X $ \cite{MR339155}.
We also prove the infinite version of the Hilton--Milnor Splitting (\Cref{cor:infiniteHiltonMilnortopos}).
Using Morel's unstable $ \AA^1 $-connectivity Theorem, we explain why these infinite splittings hold in motivic spaces over a perfect field (\Cref{cor:infinitemotivicsplittings}).
As an application, we give a new description of \smash{$ \Omega\Sigma^2(\PP^1 \smallsetminus \{0,1,\infty\}) $} (\Cref{ex:OmegaSigma2P13pts}).

The key tool to prove all of these results is the following lemma about how infinite James Splittings interact with localizations.

\begin{lemma}\label{lem:localizationtrick}
    Let $ \dY $ be an $\infty$-category, $ j \colon \dX \hookrightarrow \dY $ a full subcategory, and assume that the inclusion $ j $ admits a left adjoint $ L \colon \dY \to \dX $.
    Assume that:
    \begin{enumerate}[{\upshape (\ref*{lem:localizationtrick}.1)}]
        \item The $ \infty $-categories $ \dX $ and $ \dY $ have universal pushouts and the $ \infty $-category $ \dY_{\ast} $ has countable coproducts.

        \item The functor $ L \colon \dY \to \dX $ commutes with finite products and the formation of loop objects (e.g. $ L $ is left exact). 
    \end{enumerate}
    If $ X \in \dX_{\ast} $ is a pointed object with the property that the comparison morphism 
    \begin{equation*}
        c_{j(X)} \colon \bigvee_{i \geq 1} \Sigma_{\dY} j(X)^{\wedge i} \to \Sigma_{\dY} \Omega_{\dY} \Sigma_{\dY} j(X) 
    \end{equation*}
    is an equivalence in $ \dY_{\ast} $, then the comparison morphism
    \begin{equation*}
        c_{X} \colon \bigvee_{i \geq 1} \Sigma_{\dX} X^{\wedge i} \to \Sigma_{\dX} \Omega_{\dX} \Sigma_{\dX} X 
    \end{equation*}
    is an equivalence in $ \dX_{\ast} $.
\end{lemma}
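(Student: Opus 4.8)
The plan is to use that $L$, as a left adjoint, preserves all colimits, together with the assumption that $L$ preserves finite products and loop objects, to transport the entire James Splitting construction over $\dY$ to the corresponding one over $\dX$; the comparison morphism $c_X$ will then literally be the image under $L$ of $c_{j(X)}$, and hence an equivalence.

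First I would pass to pointed objects. Since $L$ preserves finite products it preserves the terminal object, so it restricts to a functor $L_{\ast}\colon \dY_{\ast}\to\dX_{\ast}$ that is left adjoint to $j_{\ast}\colon\dX_{\ast}\hookrightarrow\dY_{\ast}$. As a left adjoint $L_{\ast}$ preserves every colimit that exists in $\dY_{\ast}$, so it preserves pushouts, countable coproducts, suspensions, cofibers, wedge sums, and---since $X\smashprod Y$ is the cofiber of a comparison map between $X\wedgesum Y$ and $X\times Y$---smash products; combined with the hypotheses it also preserves finite products and loop objects. In particular $L_{\ast}(\Sigma_{\dY}Z^{\smashprod i})\equivalent \Sigma_{\dX}(L_{\ast}Z)^{\smashprod i}$ and $L_{\ast}(\Sigma_{\dY}\Omega_{\dY}\Sigma_{\dY}Z)\equivalent \Sigma_{\dX}\Omega_{\dX}\Sigma_{\dX}(L_{\ast}Z)$, naturally in $Z\in\dY_{\ast}$. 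Since $j$ is fully faithful the counit $L_{\ast}j_{\ast}\to\id_{\dX_{\ast}}$ is an equivalence, so I fix a natural equivalence $L_{\ast}(j(X))\equivalent X$; applying $L_{\ast}$ to the countable wedge $\bigvee_{i\geq1}\Sigma_{\dY}j(X)^{\smashprod i}$ (which exists by hypothesis) shows that $\bigvee_{i\geq1}\Sigma_{\dX}X^{\smashprod i}$ exists in $\dX_{\ast}$, so that $c_X$ makes sense.

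Next I would check that every step feeding into \Cref{james,jamesredux}---namely \Cref{lem:wedgepushout,lem:3by3,pushout,cofiber-product,lem:suspensionsmash}, the description of $\Sigma\Omega\Sigma X$ from \Cref{lem:pr2pushout,cor:cofibpr2}, and the passage to the colimit over $n$ defining the comparison morphism in \Cref{ntn:Jamescomparison}---is built, functorially in the input object, out of finite products, pushouts (hence suspensions, cofibers, wedge sums, smash products), and loop objects. Granting this, $L_{\ast}$ carries $c_{j(X)}\colon\bigvee_{i\geq1}\Sigma_{\dY}j(X)^{\smashprod i}\to\Sigma_{\dY}\Omega_{\dY}\Sigma_{\dY}j(X)$ to the comparison morphism $c_{L_{\ast}(j(X))}$ over $\dX$, which under the identification $L_{\ast}(j(X))\equivalent X$ is exactly $c_X$. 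Since $c_{j(X)}$ is an equivalence and $L_{\ast}$ preserves equivalences, $c_X$ is an equivalence.

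The step I expect to be the main obstacle is verifying that $L_{\ast}$ preserves the construction of \Cref{lem:pr2pushout}, in particular the auxiliary morphism $a_X\colon X\times\Omega\Sigma X\to\Omega\Sigma X$: unlike everything else, $a_X$ is extracted from a diagram of \emph{pullback} squares (the ones obtained from the pushout square defining $\Sigma X$ together with Mather's Second Cube Lemma), and $L_{\ast}$, being only a left adjoint, need not preserve arbitrary pullbacks. The resolution is that these pullbacks are taken along the null morphism $X\to\Sigma X$, and a pullback along a null morphism $X\to W$ decomposes as $X\times\Omega W$; more precisely, the relevant pullback squares paste out of product squares and loop-object squares, both of which $L_{\ast}$ preserves. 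Hence $L_{\ast}$ preserves those pullback squares together with the morphisms $\pr_2$ and $a_X$ extracted from them, and therefore preserves the pushout square of \Cref{lem:pr2pushout} and the equivalence $\cofib(\pr_2\colon X\times\Omega\Sigma X\to\Omega\Sigma X)\equivalent\Sigma\Omega\Sigma X$ of \Cref{cor:cofibpr2}. Once this routine diagram chase is in place, the rest of the argument goes through as above.
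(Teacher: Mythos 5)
Your proof is correct and follows essentially the same route as the paper's: the paper simply observes that $L$ descends to pointed objects, preserves finite products (hence smash products) and loop objects, and, being a left adjoint, all colimits, so that $c_X \simeq L(c_{j(X)})$ and the claim follows from $c_{j(X)}$ being an equivalence. Your final paragraph, checking that $L$ also preserves the pullback-based construction of $a_X$ in \Cref{lem:pr2pushout} because those pullbacks decompose into products and loop objects, merely makes explicit a compatibility that the paper's one-line identification $c_X \simeq L(c_{j(X)})$ leaves implicit.
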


\begin{proof}
    Since the localization $ L \colon \dY \to \dX $ preserves the terminal object, the functor $ L $ descends to the level of pointed objects.
    Moreover, since $ L $ preserves finite products, the functor $ L \colon \dY_{\ast} \to \dX_{\ast} $ commutes with smash products.
    Since the functor $ L \colon \dY_{\ast} \to \dX_{\ast} $ also commutes with the formation of loop objects, we see that the morphism $ c_{X} $ is equivalent to $ L(c_{j(X)}) $.
    The assumption that the morphism $ c_{j(X)} $ is an equivalence completes the proof.
\end{proof}

\begin{prop}[James Splitting]\label{prop:infiniteJamestopos}
    Let $ \dX $ be an $ \infty $-topos.
    Then for each $ 0 $-connected pointed object $ X \in \dX_{\ast} $, the natural comparison morphism
    \begin{equation*}
        c_{X} \colon \bigvee_{i \geq 1} \Sigma X^{\wedge i} \to \Sigma \Omega \Sigma X 
    \end{equation*}
    is an equivalence in $ \dX_{\ast} $.
\end{prop}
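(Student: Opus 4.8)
The plan is to run a connectedness argument against the finite James splittings of \Cref{jamesredux}. For each $n \geq 1$ we have an equivalence $\Sigma\Omega\Sigma X \simeq W_n \vee E_n$, where $W_n := \bigvee_{1 \leq i \leq n}\Sigma X^{\wedge i}$ and $E_n := X^{\wedge n}\wedge\Sigma\Omega\Sigma X$, and by construction (\Cref{ntn:Jamescomparison}) the comparison morphism $c_X$ is the colimit over $n$ of the wedge-summand inclusions $f_n \colon W_n \hookrightarrow W_n \vee E_n \simeq \Sigma\Omega\Sigma X$, where $f_{n+1}$ restricts to $f_n$ along $W_n \hookrightarrow W_{n+1}$. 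So it suffices to understand the connectivity of $f_n$ and of the structure morphisms $\iota_n \colon W_n \to \bigvee_{i\geq 1}\Sigma X^{\wedge i}$ into the colimit.

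The engine of the argument is the estimate that $E_n$ is $n$-connected, and this is the only place the hypothesis that $X$ is $0$-connected enters. Indeed $\Sigma\Omega\Sigma X$, being a suspension of a pointed object, is $0$-connected, and $X^{\wedge n}$ is $(n-1)$-connected by \Cref{prop:smashconn} (using that $X$ is $0$-connected and $n \geq 1$), so another application of \Cref{prop:smashconn} shows $E_n = X^{\wedge n}\wedge\Sigma\Omega\Sigma X$ is $n$-connected. Hence, on the one hand, the coproduct insertion $f_n \colon W_n \to W_n \vee E_n$ is $(n-1)$-connected; on the other hand, writing $\bigvee_{i\geq 1}\Sigma X^{\wedge i} \simeq W_n \vee \bigvee_{i > n}\Sigma X^{\wedge i}$ and noting that the complementary summand $\bigvee_{i > n}\Sigma X^{\wedge i}$ is $(n+1)$-connected — each $\Sigma X^{\wedge i}$ with $i > n$ is $i$-connected, and one passes from finite wedges to the countable wedge using \Cref{cor:filteredcolimkconn} — the structure morphism $\iota_n$ is also $(n-1)$-connected. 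Since $f_n = c_X \circ \iota_n$ and both $f_n$ and $\iota_n$ are $(n-1)$-connected, the two-out-of-three property for $(n-1)$-connected morphisms (\Cref{prop:connectednessfacts}) forces $c_X$ to be $(n-1)$-connected. Letting $n \to \infty$, the morphism $c_X$ is $\infty$-connected; equivalently, the cofiber $\cofib(c_X) \simeq \colim_n E_n$ is $\infty$-connected.

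The step I expect to be the main obstacle is upgrading ``$c_X$ is an $\infty$-connected morphism'' to ``$c_X$ is an equivalence'': in a non-hypercomplete $\infty$-topos an $\infty$-connected morphism need not be invertible, so it is not enough to observe that $\colim_n E_n$ is $\infty$-connected. The clean route is to identify $c_X$ via the James filtration: the combinatorial splitting $\Sigma\J_m(X) \simeq \bigvee_{1 \leq i \leq m}\Sigma X^{\wedge i}$ exhibits $\bigvee_{i \geq 1}\Sigma X^{\wedge i}$ as $\colim_m \Sigma\J_m(X)$, and under this identification $c_X$ becomes $\Sigma$ applied to the canonical map $\colim_m \J_m(X) \to \Omega\Sigma X$, which is an equivalence for $0$-connected $X$ (James's Theorem, valid in any $\infty$-topos). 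For hypercomplete $\infty$-topoi the $\infty$-connectedness established above already suffices.
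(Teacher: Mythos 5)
The connectedness half of your argument is essentially the paper's own proof: the same estimates (with minor variations — you get the $0$-connectedness of $\Sigma\Omega\Sigma X$ directly from the basepoint rather than via Freudenthal, and you settle for $(n-1)$- instead of $n$-connectedness of the structure morphism $\iota_n$, both of which are fine), the same use of \Cref{cor:filteredcolimkconn} for the complementary summand, and the same two-out-of-three argument via \enumref{prop:connectednessfacts}{7} to conclude that $c_X$ is $\infty$-connected. You are also right that the genuine issue is promoting $\infty$-connectedness to invertibility when $\dX$ is not hypercomplete; the paper handles this by \Cref{lem:localizationtrick}, writing $\dX$ as a left exact localization of a (hypercomplete) presheaf $\infty$-topos, whereas you propose instead to identify $c_X$ with $\Sigma$ of James's map $u\colon \J(X)\to\Omega\Sigma X$ and invoke \cite[Section 6]{james-type-thy}.

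That last identification is where your proposal has a real gap: you assert, but do not prove, that under the splittings $\Sigma\J_m(X)\simeq\bigvee_{1\le i\le m}\Sigma X^{\wedge i}$ of \Cref{partial-splitting} the morphism $c_X$ ``becomes'' $\Sigma u$. Two compatibilities are needed and neither is a formality. First, to identify $\bigvee_{i\ge1}\Sigma X^{\wedge i}$ with $\colim_m\Sigma\J_m(X)$ you need the splittings of \Cref{partial-splitting} to be natural in $m$, i.e.\ to intertwine $\Sigma i_m$ with the summand inclusions; the paper's proof builds these splittings from inductively chosen retractions and never checks such naturality. Second, and more seriously, you need the composite $\bigvee_{i\le m}\Sigma X^{\wedge i}\simeq\Sigma\J_m(X)\xrightarrow{\Sigma u_m}\Sigma\Omega\Sigma X$ to agree with the insertion of \Cref{jamesredux} that defines $c_X$ in \Cref{ntn:Jamescomparison}; these two splittings come from quite different constructions (the iterated $\cofib(\pr_2)$ argument of \Cref{james} versus the James-filtration argument via \Cref{Jamescofib}), so their agreement requires an argument. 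The closest statement in the paper — the second clause of \Cref{partial-splitting} — is phrased in terms of, and proved after, \Cref{prop:infiniteJamestopos} itself, so it cannot be cited here without circularity. Until this identification is established, your final step shows only that source and target of $c_X$ are abstractly equivalent, not that $c_X$ is an equivalence; the paper's reduction to the hypercomplete case via \Cref{lem:localizationtrick} is the cleaner way to finish.
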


\begin{proof}
    Since every $ \infty $-topos is a left exact localization of a presheaf $ \infty $-topos and presheaf $ \infty $-topoi are hypercomplete (see \Cref{defn:hypercomplete}), by \Cref{lem:localizationtrick} we are reduced to the case that $ \dX $ is hypercomplete.
    That is, it suffices to show that the morphism $ c_{X} $ is $ \infty $-connected.
    
    To see this, notice that for each integer $ n \geq 1 $, the summand inclusions of $ \bigvee_{i = 1}^n \Sigma X^{\smashprod i} $ into $ \bigvee_{i = 1}^{\infty} \Sigma X^{\smashprod i} $ and 
    \begin{equation*}
        \Sigma \Omega \Sigma X \simeq \paren{\bigvee_{1 \leq i \leq n} \Sigma X^{\wedge i}} \vee (X^{\smashprod n} \wedge \Sigma\Omega \Sigma X) 
    \end{equation*}
    (\Cref{jamesredux}) fit into a commutative triangle
    \begin{equation}\label{triang:comparison}
        \begin{tikzcd}[sep=2.5em]
            \bigvee_{i = 1}^n \Sigma X^{\smashprod i} \arrow[r] \arrow[dr] & \bigvee_{i = 1}^{\infty} \Sigma X^{\smashprod i} \arrow[d, "c_{X}"] \\
            & \Sigma \Omega \Sigma X \period
        \end{tikzcd}
    \end{equation}
    Since $ X $ is $ 0 $-connected, combining \enumref{prop:smashconn}{1}, \enumref{prop:smashconn}{4}, \enumref{prop:smashconn}{5} and \Cref{cor:Freudenthal} we see that $ X^{\smashprod n} \smashprod \Sigma \Omega \Sigma X $ is $ n $-connected.
    Hence \enumref{prop:smashconn}{2} shows that the summand inclusion
    \begin{equation*}
        \bigvee_{i = 1}^n \Sigma X^{\smashprod i} \to \Sigma \Omega \Sigma X
    \end{equation*}
    is $ (n-1) $-connected.
    Similarly, since $ X $ is $ 0 $-connected, combining \enumref{prop:smashconn}{1}, \enumref{prop:smashconn}{5}, and \Cref{cor:filteredcolimkconn} we see that the object $ \bigvee_{i \geq n + 1}^{\infty} \Sigma X^{\smashprod i} $ is $ (n + 1) $-connected.
    Again applying \enumref{prop:smashconn}{2}, we see that the summand inclusion
    \begin{equation*}
        \bigvee_{i = 1}^n \Sigma X^{\smashprod i} \to \bigvee_{i = 1}^{\infty} \Sigma X^{\smashprod i}
    \end{equation*}
    is $ n $-connected.
    The commutativity of the triangle \eqref{triang:comparison} combined with \enumref{prop:connectednessfacts}{7} show that the morphism
    \begin{equation*}
        c_{X} \colon \bigvee_{i \geq 1} \Sigma X^{\wedge i} \to \Sigma \Omega \Sigma X 
    \end{equation*}
    is $ (n - 1) $-connected.
    Since this is true for each integer $ n \geq 1 $, we have shown that morphism $ c_{X} $ is $ \infty $-connected, as desired.
\end{proof}

\begin{remark}
    Of course, in the proof of \Cref{prop:infiniteJamestopos} we can further reduce to the case $ \dX = \Spc $ and the claim follows from the classical James Splitting.
    The purpose of our proof is to provide an explaination that does not appeal to the classical result but, rather, only uses basic manipulations available in an $ \infty $-topos.
\end{remark}

\begin{prop}[Hilton--Milnor Splitting, general version]\label{prop:infiniteHiltonMilnorgeneral}
    Let $ \dX $ be an $ \infty $-category with universal pushouts and countable coproducts, and let $ X,Y \in \dX_{\ast} $ be pointed objects.
    \begin{enumerate}[{\upshape (\ref*{prop:infiniteHiltonMilnorgeneral}.1)}]
        \item\label{prop:infiniteHiltonMilnorgeneral.1} If the comparison morphism $ c_Y $ is an equivalence, then there is a natural equivalence of pointed objects
        \begin{equation*}
            \Omega X \times \Omega\Sigma Y \times \Omega\paren{\Sigma\Omega X \smashprod \bigvee_{j \geq 1} Y^{\wedge j}} \similarrightarrow \Omega(X \wedgesum \Sigma Y) \period
        \end{equation*}

        \item\label{prop:infiniteHiltonMilnorgeneral.2} If the comparison morphisms $ c_X $ and $ c_Y $ are equivalences, then there is a natural equivalence of pointed objects
        \begin{equation*}
            \Omega\Sigma X \times \Omega\Sigma Y \times \Omega\Sigma\paren{\bigvee_{i,j \geq 1} X^{\wedge i} \smashprod Y^{\wedge j}} \similarrightarrow \Omega\Sigma(X \wedgesum Y) \period
        \end{equation*}
    \end{enumerate}
\end{prop}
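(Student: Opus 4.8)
The plan is to deduce both parts from the Fundamental Hilton--Milnor Splitting (\Cref{hilton-milnor}) by massaging the ``error term'' $\Omega\Sigma(\Omega A\smashprod\Omega B)$ into the desired shape. The only tools needed are \Cref{lem:suspensionsmash} (which lets one slide a suspension through a smash factor), the fact that $\Sigma$ commutes with countable coproducts (it is a colimit, or equivalently a left adjoint to $\Omega$), the hypotheses that $c_X$ and/or $c_Y$ are equivalences, and — for part~(2) only — the fact that smashing with a fixed object commutes with countable coproducts. All of the ingredients are natural, so the resulting equivalences are natural.

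For \enumref{prop:infiniteHiltonMilnorgeneral}{1} I would apply \Cref{hilton-milnor} to the pair $X$ and $\Sigma Y$ to obtain $\Omega(X\vee\Sigma Y)\simeq\Omega X\times\Omega\Sigma Y\times\Omega\Sigma(\Omega X\smashprod\Omega\Sigma Y)$, and then identify the third factor. Starting from $\Sigma(\Omega X\smashprod\Omega\Sigma Y)$: \Cref{lem:suspensionsmash} moves the suspension onto $\Omega\Sigma Y$ giving $\Omega X\smashprod\Sigma\Omega\Sigma Y$; the hypothesis $c_Y\simeq\mathrm{id}$ rewrites $\Sigma\Omega\Sigma Y$ as $\bigvee_{j\geq1}\Sigma Y^{\wedge j}$; commuting $\Sigma$ past the wedge rewrites this as $\Sigma\bigl(\bigvee_{j\geq1}Y^{\wedge j}\bigr)$; and a final application of \Cref{lem:suspensionsmash} moves the suspension onto $\Omega X$. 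Hence $\Sigma(\Omega X\smashprod\Omega\Sigma Y)\simeq\Sigma\Omega X\smashprod\bigvee_{j\geq1}Y^{\wedge j}$, and applying $\Omega$ finishes part~(1). Note that this part uses only the existence of countable coproducts and that $\Sigma$ preserves them; no universality of infinite coproducts is needed.

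For \enumref{prop:infiniteHiltonMilnorgeneral}{2} the cleanest route is to bootstrap from part~(1), applied with $\Sigma X$ in place of $X$ (legitimate, since part~(1) constrains only $Y$, via $c_Y$). Using $\Sigma X\vee\Sigma Y\simeq\Sigma(X\vee Y)$ this gives
\[
    \Omega\Sigma(X\vee Y)\simeq\Omega\Sigma X\times\Omega\Sigma Y\times\Omega\bigl(\Sigma\Omega\Sigma X\smashprod\textstyle\bigvee_{j\geq1}Y^{\wedge j}\bigr).
\]
Now $c_X\simeq\mathrm{id}$ together with $\Sigma$ commuting with wedges rewrites $\Sigma\Omega\Sigma X$ as $\Sigma\bigl(\bigvee_{i\geq1}X^{\wedge i}\bigr)$, so by \Cref{lem:suspensionsmash} the third factor becomes $\Omega\Sigma\bigl(\bigl(\bigvee_{i\geq1}X^{\wedge i}\bigr)\smashprod\bigl(\bigvee_{j\geq1}Y^{\wedge j}\bigr)\bigr)$; it then remains to distribute the smash product over the two wedges to obtain $\bigvee_{i,j\geq1}X^{\wedge i}\smashprod Y^{\wedge j}$.

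I expect this last distributivity step to be the main obstacle: since $A\smashprod(-)=\cofib\bigl(A\wedgesum(-)\to A\times(-)\bigr)$ involves the product $A\times(-)$, it commutes with countable coproducts only once one knows those coproducts are universal (equivalently, that $A\times(-)$ preserves them) — a hypothesis slightly stronger than bare ``universal pushouts'', but one that holds in all the examples of interest ($\infty$-topoi and motivic spaces, where every colimit is universal; cf.\ \Cref{ex:motivicspaces}). I would therefore isolate a short lemma (``for $A\in\dX_{\ast}$, the functor $A\smashprod(-)$ preserves countable coproducts''), proved in the spirit of \Cref{lem:suspensionsmash}: express each wedge as a pointed coproduct, use that $A\times(-)$ and $A\wedgesum(-)$ preserve the relevant colimits, and use $A\smashprod\ast\simeq\ast$. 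With that lemma in hand the two propositions follow as sketched, the remaining work being routine bookkeeping with \Cref{lem:suspensionsmash} and the commutation of $\Sigma$ with coproducts.
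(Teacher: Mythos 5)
Your argument is essentially the paper's. Part~(1) is verbatim the paper's proof: apply \Cref{hilton-milnor} to the pair $(X,\Sigma Y)$ and identify the third factor via \Cref{lem:suspensionsmash} (twice), the commutation of $\Sigma$ with wedges, and the hypothesis that $c_Y$ is an equivalence; as you note, no universality of infinite coproducts enters here. Part~(2) is also the paper's intended route: bootstrap from (1) with $\Sigma X$ in place of $X$, use $\Sigma X\wedgesum\Sigma Y\simeq\Sigma(X\wedgesum Y)$, then $c_X$ and \Cref{lem:suspensionsmash}. Where you add value is in making explicit the step the paper's one-sentence proof of (2) leaves silent: identifying $\paren{\bigvee_{i\geq1}X^{\wedge i}}\smashprod\paren{\bigvee_{j\geq1}Y^{\wedge j}}$ with $\bigvee_{i,j\geq1}X^{\wedge i}\smashprod Y^{\wedge j}$. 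Your worry is legitimate: this distributivity amounts to $A\times(-)$ preserving countable coproducts (i.e.\ universality of countable coproducts), which does not follow formally from universal pushouts plus the mere existence of countable coproducts. Finite distributivity does follow from universal pushouts, and the countable wedge is a sequential colimit of finite wedges, but nothing in the stated hypotheses makes $A\times(-)$ preserve that sequential colimit either; compare \Cref{ex:profinitespaces}, where infinite coproducts exist but fail to be universal. So your proposed auxiliary lemma (smashing with a fixed object preserves countable coproducts, proved in the style of \Cref{lem:suspensionsmash} under the stronger universality hypothesis) is a sensible way to complete the argument, and it costs nothing in the settings where the proposition is actually used --- $\infty$-topoi (\Cref{cor:infiniteHiltonMilnortopos}) and motivic spaces (\Cref{cor:infinitemotivicsplittings}) --- since there all small colimits are universal. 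In short: same approach as the paper, with part~(2) completed more carefully than the paper's own terse proof, at the price of an extra (mild, and in practice harmless) hypothesis that the paper also uses implicitly.
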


\begin{proof}
    For \enumref{prop:infiniteHiltonMilnorgeneral}{1}, first note that the Hilton--Milnor Splitting of \Cref{hilton-milnor} provides an equivalence
    \begin{equation*}
        \Omega(X \vee \Sigma Y) \simeq \Omega X\times \Omega \Sigma Y\times \Omega \Sigma(\Omega X \wedge \Omega \Sigma Y) \period
    \end{equation*}
    The claim now follows from the natural natural equivalences
    \begin{align*}
        \Omega\paren{\Sigma\Omega X \smashprod \bigvee_{j \geq 1} Y^{\wedge j}} &\simeq \Omega\paren{\Omega X \smashprod \bigvee_{j \geq 1} \Sigma Y^{\wedge j}} && (\text{\Cref{lem:suspensionsmash}}) \\ 
        &\similarrightarrow \Omega (\Omega X \wedge \Sigma\Omega\Sigma Y) && (c_Y \text{ is an equivalence})\\
        &\simeq \Omega \Sigma(\Omega X \wedge \Omega\Sigma Y) && (\text{\Cref{lem:suspensionsmash}})
    \end{align*}

    Similarly, \enumref{prop:infiniteHiltonMilnorgeneral}{2} follows from \Cref{lem:suspensionsmash}, \enumref{prop:infiniteHiltonMilnorgeneral}{1}, and the assumption that \smash{$ c_{X} \colon \bigvee_{i \geq 1} \Sigma X^{\wedge i} \to \Sigma \Omega \Sigma X $} is an equivalence
\end{proof}

\begin{example}[Hilton--Milnor Splitting]\label{cor:infiniteHiltonMilnortopos}
    Let $ \dX $ be an $ \infty $-topos and let $ X,Y \in \dX_{\ast} $ be pointed objects.
    By \Cref{prop:infiniteJamestopos}, the hypotheses of \enumref{prop:infiniteHiltonMilnorgeneral}{1} are satisfied if $ Y $ is $ 0 $-connected, and the hypotheses of \enumref{prop:infiniteHiltonMilnorgeneral}{2} are satisfied if both $ X $ and $ Y $ are $ 0 $-connected.
\end{example}

The next application is that the infinite James and Hilton--Milnor Splittings hold for \textit{$ \AA^1 $-$ 0 $-connected} motivic spaces over a perfect field.
The infinite James Splitting was first proven by Wickelgren and Williams using the \textit{James filtration} \cite[Theorem 1.5]{MR3981318}; the infinite Hilton--Milnor Splittings in this context is new.

\begin{recollection}
    Let $ S $ be a scheme and $ n \geq 0 $ an integer.
    A motivic space $ X \in \HH(S) \subset \Shnis(\Sm_S) $ is \defn{$ \AA^1 $-$ n $-connected} if $ X $ is an $ n $-connected object of the Nisnevich $ \infty $-topos $ \Shnis(\Sm_S) $.
\end{recollection}

\begin{corollary}\label{cor:infinitemotivicsplittings}
    Let $ k $ be a perfect field and let $ X $ and $ Y $ be pointed motivic spaces over $ k $.
    \begin{enumerate}[{\upshape (\ref*{cor:infinitemotivicsplittings}.1)}]
        \item\label{cor:infinitemotivicsplittings.1} If $ X $ is $ \AA^1 $-$ 0 $-connected, then the natural comparison morphism
        \begin{equation*}
            c_{X} \colon \bigvee_{i \geq 1} \Sigma X^{\wedge i} \to \Sigma \Omega \Sigma X 
        \end{equation*}
        is an equivalence in $ \HH(\Spec(k))_{\ast} $.

        \item\label{cor:infinitemotivicsplittings.2} If $ Y $ is $ \AA^1 $-$ 0 $-connected, then there is a natural equivalence of pointed motivic spaces
        \begin{equation*}
            \Omega X \times \Omega\Sigma Y \times \Omega\paren{\Sigma\Omega X \smashprod \bigvee_{j \geq 1} Y^{\wedge j}} \similarrightarrow \Omega(X \wedgesum \Sigma Y) \period
        \end{equation*}

        \item\label{cor:infinitemotivicsplittings.3} If $ X $ and $ Y $ are $ \AA^1 $-$ 0 $-connected, then there is a natural equivalence of pointed motivic spaces
        \begin{equation*}
            \Omega\Sigma X \times \Omega\Sigma Y \times \Omega\Sigma\paren{\bigvee_{i,j \geq 1} X^{\wedge i} \smashprod Y^{\wedge j}} \similarrightarrow \Omega\Sigma(X \wedgesum Y) \period
        \end{equation*}
    \end{enumerate}
\end{corollary}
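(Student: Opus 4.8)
The plan is to deduce all three parts from the $ \infty $-topos-level results \Cref{prop:infiniteJamestopos,prop:infiniteHiltonMilnorgeneral} by transporting the infinite James Splitting along motivic localization, using \Cref{lem:localizationtrick}. Concretely, I would apply \Cref{lem:localizationtrick} with $ \dY = \Shnis(\Sm_k) $, $ \dX = \HH(\Spec(k)) $, taking $ j \colon \HH(\Spec(k)) \hookrightarrow \Shnis(\Sm_k) $ to be the defining inclusion and $ L = \Lmot $ to be motivic localization.

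First I would check the two hypotheses of \Cref{lem:localizationtrick}. The first holds because $ \Shnis(\Sm_k) $ is an $ \infty $-topos, hence has universal colimits, with $ \Shnis(\Sm_k)_{\ast} $ cocomplete (so in particular it has countable coproducts), while $ \HH(\Spec(k)) $ has universal pushouts by Hoyois's theorem (\Cref{ex:motivicspaces}) and is presentable, hence also has countable coproducts. For the second hypothesis, motivic localization preserves finite products (\Cref{ex:motivicspaces}) but is \emph{not} left exact, so the one remaining point is to verify that $ \Lmot $ commutes with the formation of loop objects. This is precisely where the hypothesis that $ k $ is a perfect field enters: it follows from Morel's unstable $ \AA^1 $-connectivity Theorem \cite[Theorems 5.46 and 6.1]{MR2934577} that $ \Lmot $ commutes with loops over a perfect field \cites[Theorem 2.4.1]{MR3654105}[Theorem 6.46]{MR2934577}. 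I expect assembling and citing the correct form of Morel's theorem to be the main obstacle; indeed, this is exactly the input that is unavailable over higher-dimensional bases, which is why the corollary is restricted to perfect fields.

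Granting these verifications, part \enumref{cor:infinitemotivicsplittings}{1} is immediate: if $ X $ is $ \AA^1 $-$ 0 $-connected then, by definition, $ j(X) $ is a $ 0 $-connected object of the $ \infty $-topos $ \Shnis(\Sm_k) $, so \Cref{prop:infiniteJamestopos} shows that the comparison morphism $ c_{j(X)} $ is an equivalence in $ \Shnis(\Sm_k)_{\ast} $; \Cref{lem:localizationtrick} then upgrades this to the assertion that $ c_X $ is an equivalence in $ \HH(\Spec(k))_{\ast} $.

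Finally, parts \enumref{cor:infinitemotivicsplittings}{2} and \enumref{cor:infinitemotivicsplittings}{3} follow by feeding part \enumref{cor:infinitemotivicsplittings}{1} into \Cref{prop:infiniteHiltonMilnorgeneral}, applied inside $ \HH(\Spec(k)) $, which has universal pushouts and countable coproducts as noted above. For \enumref{cor:infinitemotivicsplittings}{2}, the hypothesis of \enumref{prop:infiniteHiltonMilnorgeneral}{1} that $ c_Y $ is an equivalence holds by part \enumref{cor:infinitemotivicsplittings}{1}, since $ Y $ is assumed $ \AA^1 $-$ 0 $-connected; and for \enumref{cor:infinitemotivicsplittings}{3}, the hypothesis of \enumref{prop:infiniteHiltonMilnorgeneral}{2} that both $ c_X $ and $ c_Y $ are equivalences again holds by part \enumref{cor:infinitemotivicsplittings}{1}. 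No additional argument is needed beyond these reductions.
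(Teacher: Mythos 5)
Your proposal is correct and follows essentially the same route as the paper: part \enumref{cor:infinitemotivicsplittings}{1} is obtained by applying \Cref{lem:localizationtrick} to the motivic localization functor (with Hoyois's results supplying universal pushouts and preservation of finite products, and Morel's unstable $\AA^1$-connectivity Theorem supplying commutation with loops over a perfect field) together with \Cref{prop:infiniteJamestopos}, and parts \enumref{cor:infinitemotivicsplittings}{2} and \enumref{cor:infinitemotivicsplittings}{3} then follow from \Cref{prop:infiniteHiltonMilnorgeneral}. The only difference is that you spell out the hypothesis checks (countable coproducts, presentability, $0$-connectedness of $j(X)$ in the Nisnevich topos) slightly more explicitly than the paper does.
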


\begin{proof}
    First note that \enumref{cor:infinitemotivicsplittings}{2} and \enumref{cor:infinitemotivicsplittings}{3} follow from \Cref{prop:infiniteHiltonMilnorgeneral} and \enumref{cor:infinitemotivicsplittings}{1}.
    To verify \enumref{cor:infinitemotivicsplittings}{1}, note that since pushouts are universal in the $ \infty $-category $ \HH(\Spec(k)) $ of motivic spaces over $ k $ (\Cref{ex:motivicspaces}) and $ \HH(\Spec(k)) $ is a localization of the $ \infty $-topos $ \Shnis(\Sm_k) $ of Nisnevich sheaves on $ \Sm_k $, it suffices to check that the motivic localization functor
    \begin{equation*}
        \Lmot \colon \Shnis(\Sm_k) \to \HH(\Spec(k))
    \end{equation*}
    satisfies the hypotheses of \Cref{lem:localizationtrick}.
    To see this, first note that over any base scheme the motivic localization commutes with finite products \cite[Corollary 3.5]{MR3570135}.
    Second, since the field $ k $ is perfect, Morel's unstable $ \AA^1 $-connectivity Theorem \cite[Theorems 5.46 and 6.1]{MR2934577} implies that motivic localization commutes with the formation of loop objects \cites[Theorem 2.4.1]{MR3654105}[Theorem 6.46]{MR2934577}.
\end{proof}

\begin{example}\label{ex:OmegaSigma2P13pts}
    Let $ k $ be a perfect field.
    \Cref{cor:infinitemotivicsplittings} gives the following variant on \Cref{example:P1threepts}.
    Since $ \Sigma\GGm \equivalent \PP^1 $ is $ \AA^1 $-$ 0 $-connected, \enumref{cor:infinitemotivicsplittings}{3} provides equivalences of pointed motivic spaces over $ k $
    \begin{align*}
        \Omega\Sigma^2(\PP_k^1 \smallsetminus \{0,1,\infty\}) &\equivalent \Omega\Sigma\big(\PP_k^1 \wedgesum \PP_k^1\big) \\ 
        &\equivalent \Omega\Sigma\PP_k^1 \times \Omega\Sigma\PP_k^1 \times \Omega\Sigma\paren{\bigvee_{i,j \geq 1} \big(\PP_k^1\big)^{\wedge (i+j)}} \\ 
        &\equivalent \Omega\Sph{3,1} \times \Omega\Sph{3,1} \times \Omega\paren{\bigvee_{i,j \geq 1} \Sph{2(i+j)+1,i+j} } \period
    \end{align*}
\end{example}

We complete this section by constructing the \textit{Hopf maps} that appear in the metastable EHP sequence.

\begin{construction}[Hopf maps]\label{hopf}
    Let $ \dX $ be an $ \infty $-topos and $ X $ a pointed $ 0 $-connected object of $ \dX $.
    For each integer $ n \geq 1 $, we define the \defn{Hopf map} $ \Hopf_n \colon \Omega \Sigma
    X\to \Omega \Sigma X^{\wedge n} $ as the adjoint to the collapse map
    \begin{equation*}
        \Sigma \Omega \Sigma X \simeq \bigvee_{i\geq 1} \Sigma X^{\wedge i} \to \Sigma X^{\smashprod n}
    \end{equation*}
    induced by the James Splitting of \Cref{prop:infiniteJamestopos}.
\end{construction}


\section{The James filtration \& metastable EHP sequence}\label{sec:EHP}

In classical algebraic topology, the \textit{metastable EHP sequence} is the statement that the composite
\begin{equation*}
    \begin{tikzcd}[sep=1.5em]
        X \arrow[r] & \Omega \Sigma X \arrow[r, "\Hopf_2"] & \Omega \Sigma X^{\wedge 2}
    \end{tikzcd}
\end{equation*}
is a fiber sequence in a range depending on the connectedness of $ X $, known as the \textit{metastable range}.
Here the first map $ X \to \Omega\Sigma X $ is the unit and $ \Hopf_2 $ is the Hopf map (\Cref{hopf}).
For the higher Hopf maps $ \Hopf_n \colon \Omega\Sigma X \to \Omega \Sigma X^{\wedge n} $, there is an analogous fiber sequence in a range
\begin{equation*}
    \begin{tikzcd}[sep=1.5em]
        \J_{n-1}(X) \arrow[r] & \Omega \Sigma X \arrow[r, "\Hopf_n"] & \Omega \Sigma X^{\wedge n} \comma
    \end{tikzcd}
\end{equation*}
where $ \J_{n-1}(X) $ is the $ (n-1) $\textsuperscript{st} piece of the \textit{James filtration} on $ \Omega\Sigma X $.

This section is dedicated to a non-computational proof of the metastable EHP sequence in an $ \infty $-topos that only makes use of the Blakers--Massey Theorem and some basic connectedness results (see \Cref{metastable}).
In \cref{subsec:Jamesconstruction} we review the James filtration.
In \cref{subsec:splitJamesconstr} we refine the James Splitting to a splitting
\begin{equation*}
    \Sigma \J_n(X) \simeq \bigvee_{i=1}^{n} \Sigma X^{\wedge i} \period
\end{equation*}
In \cref{subsec:EHPproofs}, we give our non-computational proof of the metastable EHP sequence via the Blakers--Massey Theorem, and also record a computational proof for posterity.


\subsection{The James filtration}\label{subsec:Jamesconstruction}

Classically, the James filtration $ \{\J_n(X)\}_{n\geq 0} $ provides a multiplicative filtration
on the free monoid $ \J(X) $ on a pointed space $ X $, in the homotopical sense.
At the point-set level, $ \J(X) $ can be presented as the free topological monoid on $ X $, and $ \J_n(X) $ can be identified the subspace of words of length at most $ n $ in $ \J(X) $.
Concatenation of words then supplies $\{\J_n(X)\}_{n\geq 0}$ with the structure of a filtered monoid.
Since the trivial monoid and trivial group coincide, if $ X $ is connected, then the free monoid $ \J(X) $ on $ X $ coincides with the free group $ \Omega\Sigma X $ on $ X $.

In a general $\infty$-category, we can define the James filtration as follows.
This definition is provided in \cite[Section 3]{james-type-thy} in the context of homotopy type theory; the arguments made in \cite[Section 3]{james-type-thy} are formal and valid in any $\infty$-topos.

\begin{construction}[James filtration]\label{james-def}
    Let $\dX$ be an $\infty$-category with finite products and pushouts, and let $ X \in \dX_{\ast} $ be a pointed object.
    For each integer $ n \geq 0 $ we define a pointed object $ \J_{n}(X) \in \dX_{\ast} $ as well as morphisms
    \begin{equation*}
        i_{n} \colon \J_{n}(X) \to \J_{n+1}(X) \andeq \alpha_{n} \colon X \times \J_{n}(X) \to \J_{n+1}(X)
    \end{equation*}
    in $ \dX_{\ast} $ recursively as follows.
    \begin{enumerate}[(\ref*{james-def}.1)]
        \item\label{james-def.1} We define $ \J_0(X) \colonequals \ast $, $ \J_1(X) \colonequals X $, the morphism $ i_0 \colon \ast \to X $ is the basepoint, and the morphism $ \alpha_0 \colon X \times \ast \to X $ is the projection $ \pr_1 \colon X \times \mathbin{\ast} \equivto X $.

        \item\label{james-def.2} For $ n \geq 2 $, we define $ \J_{n}(X) $, $ i_{n-1} $, and $ \alpha_{n-1} $ by the pushout square 
        \begin{equation}\label{james-def-diagram}
            \begin{tikzcd}[sep=2em]
                \displaystyle X \times \J_{n-2}(X) \coproductlimits^{\J_{n-2}(X)} \J_{n-1}(X) \arrow[r] \arrow[d] \arrow[dr, phantom, very near end, "\ulcorner", xshift=1em, yshift=-0.25em] & \J_{n-1}(X) \arrow[d, "i_{n-1}"] \\
                X \times \J_{n-1}(X) \arrow[r, "\alpha_{n-1}"'] & \J_{n}(X) \comma
            \end{tikzcd}
        \end{equation}
	    where: the top horizontal morphism is induced by the universal property of the
        pushout by the commutative square
        \begin{equation*}
            \begin{tikzcd}[sep=2em]
                \J_{n-2}(X) \arrow[r, "i_{n-2}"] \arrow[d, "{(\ast,\id)}"'] &
                \J_{n-1}(X) \arrow[d, equals] \\
                X \times \J_{n-2}(X) \arrow[r, "\alpha_{n-2}"'] & \J_{n-1}(X) \comma
            \end{tikzcd}
        \end{equation*}   
        and the left vertical morphism is induced by the universal property of the pushout by the commutative square
	    \begin{equation*}
			\begin{tikzcd}[sep=2.5em]
			    \J_{n-2}(X) \arrow[r, "i_{n-2}"] \arrow[d, "{(\ast,\id)}"'] &
			    \J_{n-1}(X) \arrow[d, "{(\ast, \id)}"] \\
			    X \times \J_{n-2}(X) \arrow[r, "\id \times i_{n-2}"'] & X\times \J_{n-1}(X) \period
			\end{tikzcd}
	    \end{equation*}	    
    \end{enumerate}

    For each positive integer $ n $, define a morphism $ a_n \colon X^{\times n} \to \J_n(X) $ as the composite
    \begin{equation*}
        \begin{tikzcd}[sep=2.75em]
            X^{\times n} \equivalent X^{\times n-1} \times \J_1(X) \arrow[r, "\id \times \alpha_1"] & X^{\times n-2} \times \J_2(X) \arrow[r, "\id \times \alpha_2"] & \cdots \arrow[r] & X \times \J_{n-1}(X) \arrow[r, "\alpha_{n-1}"] & \J_n(X) \period
        \end{tikzcd}
    \end{equation*}  
    Finally, define $ \J(X) \colonequals \colim_{n \geq 0} \J_n(X) $.
\end{construction}

\begin{definition}
    Let $\dX$ be an $\infty$-category with finite products and pushouts, and let $ X \in \dX_{\ast} $ be a pointed object.
    For each integer $ n \geq 0 $ define a morphism $ u_{n} \colon \J_{n}(X) \to \Omega\Sigma X $ recursively as follows.
    The morphism $ u_0 $ is the basepoint, and the morphism $ u_1 \colon X \to \Omega\Sigma X $ is the unit.
    For $ n \geq 2 $, the morphism $ u_{n} $ is induced by the commutative square
    \begin{equation*}
        \begin{tikzcd}[sep=2em]
            \displaystyle X \times \J_{n-2}(X) \coproductlimits^{\J_{n-2}(X)} \J_{n-1}(X) \arrow[r] \arrow[d] & \J_{n-1}(X) \arrow[d, "u_{n-1}"] \\
            X \times \J_{n-1}(X) \arrow[r, "m \circ {(u_1 \times u_{n-1})}"'] & \Omega \Sigma X
        \end{tikzcd}
    \end{equation*}
    where $ m \colon \Omega \Sigma X \times \Omega \Sigma X \to \Omega \Sigma X $ is the group multiplication.

    The morphisms $ u_n $ induce a morphism $ u \colon \J(X) \to \Omega \Sigma X $.
\end{definition}

\begin{theorem}[{\cite[Section 6]{james-type-thy}}]
    Let $ \dX $ be an $ \infty $-topos and $ X \in \dX_{\ast} $ a pointed object.
    If $ X $ is $ 0 $-connected, then the morphism $ u \colon \J(X) \to \Omega \Sigma X $ is an equivalence.
\end{theorem}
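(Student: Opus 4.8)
The plan is to deduce the theorem from connectedness estimates on the James filtration, in the same spirit as our proof of \Cref{prop:infiniteJamestopos}. Since the James filtration $\{\J_n(X)\}_{n \geq 0}$, the morphisms $u_n$, and hence $u$, are assembled entirely out of finite products, pushouts, and filtered colimits — all preserved by any left-exact localization, which moreover commutes with $\Sigma$ and $\Omega$ — the localization argument used in the proof of \Cref{prop:infiniteJamestopos} (via \Cref{lem:localizationtrick}) reduces us to the case in which $\dX$ is hypercomplete. In that case it suffices to prove that $u \colon \J(X) \to \Omega\Sigma X$ is $\infty$-connected.

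Write $k \geq 0$ for the connectedness of $X$. Since $\J(X) = \colim_{n} \J_n(X)$, each $u_n \colon \J_n(X) \to \Omega\Sigma X$ factors as $\J_n(X) \to \J(X) \xrightarrow{u} \Omega\Sigma X$. So by \enumref{prop:connectednessfacts}{7} it is enough to show that for every integer $m$ there is an $n$ for which both $\J_n(X) \to \J(X)$ and $u_n$ are $m$-connected; letting $m \to \infty$ then shows that $u$ is $\infty$-connected.

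For the first map: the cofiber of each $i_{j} \colon \J_{j}(X) \to \J_{j+1}(X)$ is the $(j+1)$-fold smash power $X^{\wedge(j+1)}$, which is $((j+1)(k+1)-1)$-connected by \enumref{prop:smashconn}{5}; applying \Cref{thm:BlakersMassey} to the pushout square exhibiting this cofiber shows that $i_j$ is $((j+1)(k+1)-2)$-connected, and since $\el$-connected morphisms are closed under composition and under filtered colimit (\Cref{cor:filteredcolimkconn}), the morphism $\J_n(X) \to \J(X)$ is $((n+1)(k+1)-2)$-connected, which tends to $\infty$. For the second map, one proves by induction on $n$ that $u_n$ is $((n+1)(k+1)-2)$-connected. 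The base case $n = 1$ is precisely the Freudenthal Suspension Theorem (\Cref{cor:Freudenthal}): the unit $u_1 \colon X \to \Omega\Sigma X$ is $2k$-connected. For the inductive step, one feeds the pushout square \eqref{james-def-diagram} defining $\J_{n+1}(X)$, the square defining $u_{n+1}$, and the pushout presentations of $\Omega\Sigma X$ coming from \Cref{lem:pr2pushout} and \Cref{james} into \Cref{thm:BlakersMassey}, using the inductive bound on $u_n$ together with the connectedness of $X^{\wedge(n+1)}$.

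The reduction and the estimate for $\J_n(X) \to \J(X)$ are routine; the main obstacle is the inductive step bounding the connectedness of $u_n$. The temptation to suspend and invoke Freudenthal fails here, since that only yields that $u_n$ is $2k$-connected independently of $n$: one must instead carry out the Blakers–Massey bookkeeping directly on the pushout squares presenting $\J_{n+1}(X)$ and $\Omega\Sigma X$, tracking how the connectedness improves at each stage. Alternatively — mirroring the remark preceding the theorem about \cite[Section 3]{james-type-thy} — one may simply observe that the argument of \cite[Section 6]{james-type-thy} is formal and goes through verbatim in any $\infty$-topos.
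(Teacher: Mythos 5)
The first thing to say is that the paper contains no proof of this statement: it is imported wholesale from Brunerie \cite[Section 6]{james-type-thy}, together with the remark made just before \Cref{james-def} that his homotopy-type-theoretic arguments are formal and hence valid in any $\infty$-topos. So only your closing fallback sentence agrees with the paper, and that is a citation, not a proof. Judged as a proof, your sketch has a genuine gap exactly where you flag it. The inductive connectivity bound for $u_n$ is the content of \Cref{lem:unconn} (Brunerie's Proposition 6), which the paper also only cites; in Brunerie's treatment that estimate comes after, and relies on, the equivalence $\J(X)\simeq\Omega\Sigma X$ (combined with \Cref{lem:inconnectedness}), so your plan inverts the logical order and the ``Blakers--Massey bookkeeping'' you defer is essentially the whole theorem. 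Concretely, $u_{n+1}$ is built from $u_n$ using the group multiplication on $\Omega\Sigma X$, so there is no map from the pushout square \eqref{james-def-diagram} to a pushout presentation of $\Omega\Sigma X$ to which \Cref{thm:BlakersMassey} can simply be fed; Brunerie instead proves the equivalence by a descent/flattening argument over $\Sigma X$, exhibiting $\J(X)$ as the fiber of a map built from the filtration, and never by estimating the connectivity of $u_n$ directly. Even quoting \Cref{lem:unconn} outright would therefore be circular rather than a repair.

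Two further steps do not work as written. First, you deduce that $i_j$ is $((j+1)(k+1)-2)$-connected by ``applying \Cref{thm:BlakersMassey} to the pushout square exhibiting the cofiber'': Blakers--Massey runs the other way, taking the connectivity of the two legs (one of which is $i_j$ itself) as input and giving the connectivity of the comparison to the pullback as output; the estimate you want is \Cref{lem:inconnectedness}, again only cited, and its proof is a separate induction. (Relatedly, \Cref{cor:filteredcolimkconn} concerns $k$-connected objects, not morphisms; the morphism-level statement you need is true but not what is cited.) Second, the reduction to the hypercomplete case is not automatic: a \Cref{lem:localizationtrick}-style argument requires the statement in the presheaf $\infty$-topos $\dY$ for $j(X)$, but the inclusion $j$ is a right adjoint and does not preserve $0$-connectedness (already for $\mathrm{B}G$ of a sheaf of groups, the underlying presheaf has disconnected section spaces whenever nontrivial torsors exist), so the hypothesis need not hold for $j(X)$; one must first replace $j(X)$ by the basepoint component of $j(X)$, which is $0$-connected in $\dY$ and still localizes to $X$. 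Without such a repair, proving that $u$ is $\infty$-connected only yields the theorem for hypercomplete $\dX$, whereas the cited argument of Brunerie produces an actual equivalence in an arbitrary $\infty$-topos.
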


Brunerie \cite{james-type-thy} gives an elementary proof of the following connectedness estimate.

\begin{lemma}[{\cite[Proposition 4]{james-type-thy}}]\label{lem:inconnectedness}
    Let $ \dX $ be an $ \infty $-topos, $ k \geq 0 $ an integer, and $ X \in \dX_{\ast} $ a pointed $ k $-connected object.
    Then the morphism $ i_{n-1} \colon
    \J_{n-1}(X) \to \J_n(X) $ is $ (n(k+1) - 2) $-connected.
\end{lemma}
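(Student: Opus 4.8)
The plan is to induct on $n$, identifying $\cofib(i_{n-1})$ with an $n$-fold smash power of $X$ and reading off connectivities. The base case $n=1$ is immediate: $i_0 \colon \J_0(X) = \ast \to \J_1(X) = X$ is the basepoint of $X$, which is a section of $X \to \ast$ and hence $(k-1)$-connected since $X$ is $k$-connected \enumref{prop:connectednessfacts}{3}, and $k - 1 = 1 \cdot (k+1) - 2$. For the inductive step, fix $n \geq 2$ and suppose $i_{n-2}$ is $\big((n-1)(k+1)-2\big)$-connected. The defining pushout square \eqref{james-def-diagram} exhibits $i_{n-1}$ as a pushout of its left vertical edge
\begin{equation*}
    q_n \colon \paren{X \times \J_{n-2}(X)} \coproductlimits^{\J_{n-2}(X)} \J_{n-1}(X) \longrightarrow X \times \J_{n-1}(X)
\end{equation*}
along the top edge; since $k$-connected morphisms are stable under pushout \enumref{prop:connectednessfacts}{1}, it suffices to show that $q_n$ is $\big(n(k+1)-2\big)$-connected.

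The key point is the natural equivalence $\cofib(q_n) \simeq X \smashprod \cofib(i_{n-2})$. By construction (\Cref{james-def}), $q_n$ is the comparison morphism from the pushout of the span $X \times \J_{n-2}(X) \leftarrow \J_{n-2}(X) \to \J_{n-1}(X)$ to $X \times \J_{n-1}(X)$ determined by the commutative square
\begin{equation*}
    \begin{tikzcd}[sep=2em]
        \J_{n-2}(X) \arrow[r, "{(\ast,\id)}"] \arrow[d, "i_{n-2}"'] & X \times \J_{n-2}(X) \arrow[d, "\id \times i_{n-2}"] \\
        \J_{n-1}(X) \arrow[r, "{(\ast,\id)}"'] & X \times \J_{n-1}(X) \comma
    \end{tikzcd}
\end{equation*}
so, pasting pushouts (\Cref{lem:3by3}), $\cofib(q_n)$ is the cofiber of the induced morphism on horizontal cofibers. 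Now for any $Z \in \dX_{\ast}$ the morphism $(\ast,\id_Z) \colon Z \to X \times Z$ factors as the inclusion $Z \to X \wedgesum Z$ followed by the comparison $X \wedgesum Z \to X \times Z$; since $\cofib(Z \to X \wedgesum Z) \simeq X$ and $\cofib(X \wedgesum Z \to X \times Z) \simeq X \smashprod Z$ by definition of the smash product, this produces a cofiber sequence $X \to \cofib((\ast,\id_Z)) \to X \smashprod Z$ that is natural in $Z$. Applying this to $Z = \J_{n-2}(X)$ and $Z = \J_{n-1}(X)$, the square above induces a morphism of cofiber sequences whose first and last components are $\id_X$ and $\id_X \smashprod i_{n-2}$; passing to vertical cofibers (using that $X \smashprod (-)$ preserves cofibers) yields a cofiber sequence $\ast \to \cofib(q_n) \to X \smashprod \cofib(i_{n-2})$, whence the claim.

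Finally we do the connectivity bookkeeping. By the inductive hypothesis $\cofib(i_{n-2})$ is $\big((n-1)(k+1)-1\big)$-connected, so $\cofib(q_n) \simeq X \smashprod \cofib(i_{n-2})$ is $\big(k + (n-1)(k+1)-1+1\big) = \big(n(k+1)-1\big)$-connected by \enumref{prop:smashconn}{4}; invoking the standard relationship in an $\infty$-topos between the connectivity of a morphism and that of its cofiber (all objects in play being $0$-connected, as $k \geq 0$), $q_n$ — and hence $i_{n-1}$ — is $\big(n(k+1)-2\big)$-connected. (Iterating the equivalence from $\cofib(i_0) \simeq X$ also identifies $\cofib(i_{n-1}) \simeq X^{\smashprod n}$.) The main obstacle is the cofiber computation: one must be scrupulous about the distinction between pointed and unpointed colimits when juggling products, wedges, and smash products, and treat the degenerate corner $n = 2$ by hand via \enumref{prop:smashconn}{3}, which identifies $q_2 \colon X \wedgesum X \to X \times X$ with the canonical comparison morphism.
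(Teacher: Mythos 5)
Your base case, the reduction of $i_{n-1}$ to the left vertical morphism $q_n$ of the square \eqref{james-def-diagram} via \enumref{prop:connectednessfacts}{1}, and the identification $\cofib(q_n) \simeq X \smashprod \cofib(i_{n-2})$ (which mirrors the proof of \Cref{Jamescofib}) are all fine. The gap is the very last inference: from ``$\cofib(q_n)$ is $(n(k+1)-1)$-connected'' you conclude that $q_n$ is $(n(k+1)-2)$-connected by invoking a ``standard relationship'' between the connectivity of a morphism and of its cofiber, justified only by the objects being $0$-connected. No such implication exists at that level of generality: for an acyclic space $A$ with nontrivial perfect fundamental group, the map $A \to \ast$ has contractible ($\infty$-connected) cofiber but is only $0$-connected. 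The correct statement (relative Hurewicz, or a Blakers--Massey bootstrap applied to the square $\ast \leftarrow P \to X \times \J_{n-1}(X)$) needs the source to be $1$-connected, i.e.\ essentially $k \geq 1$; but the case $k = 0$ is exactly the one the paper uses (the infinite James splitting and \Cref{cor:Jnconn} are stated for $0$-connected objects), and even for $k \geq 1$ you would need to state and prove the cofiber-to-map implication in an arbitrary $\infty$-topos rather than cite it as standard, since it is not among the facts recorded in \Cref{prop:connectednessfacts}.

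The induction can be closed without any cofiber-to-map conversion: observe that $q_n$ is precisely the pushout-product of the basepoint $\ast \to X$ with $i_{n-2} \colon \J_{n-2}(X) \to \J_{n-1}(X)$, since by definition it is the comparison morphism
\begin{equation*}
    \paren{X \times \J_{n-2}(X)} \coproductlimits^{\ast \times \J_{n-2}(X)} \paren{\ast \times \J_{n-1}(X)} \longrightarrow X \times \J_{n-1}(X) \period
\end{equation*}
The pushout-product connectivity theorem of Anel--Biedermann--Finster--Joyal \cite[Corollary 3.3.7(4)]{BlakersMassey} --- the same result the paper already invokes for \enumref{prop:smashconn}{3}, where an $(a)$-connected and a $(b)$-connected map have $(a+b+2)$-connected pushout-product in the paper's indexing --- applied to the $(k-1)$-connected map $\ast \to X$ and the $((n-1)(k+1)-2)$-connected map $i_{n-2}$ gives directly that $q_n$ is $(n(k+1)-2)$-connected, and hence so is its cobase change $i_{n-1}$. (This is essentially Brunerie's argument, which the paper cites rather than reproves.) Your ``by hand'' treatment of $n=2$ via \enumref{prop:smashconn}{3} is in fact the special case $i_0 = (\ast \to X)$ of this observation.
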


\begin{corollary}\label{cor:Jnconn}
    Let $ \dX $ be an $ \infty $-topos, $ k \geq 0 $ an integer, and $ X \in \dX_{\ast} $ a pointed $ k $-connected object.
    Then for each integer $ n \geq 0 $, the object $ \J_n(X) $ is $ k $-connected.
\end{corollary}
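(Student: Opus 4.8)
The plan is to induct on $n$, using \Cref{lem:inconnectedness} together with the stability properties of $k$-connected morphisms collected in \Cref{prop:connectednessfacts}. The base cases are immediate: $\J_0(X) = \ast$ is terminal, hence $k$-connected (for instance by \enumref{prop:connectednessfacts}{4}, since $\trun_{\leq k}(\ast) = \ast$ is terminal), and $\J_1(X) = X$ is $k$-connected by hypothesis. So the real content is the passage from $\J_{n-1}(X)$ to $\J_n(X)$ for $n \geq 2$.

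For the inductive step I would fix $n \geq 2$ and assume $\J_{n-1}(X)$ is $k$-connected. First I would record that being an $m$-connected morphism is monotone in $m$: if a morphism is $m$-connected and $-2 \leq m' \leq m$, then it is $m'$-connected. This is immediate from \Cref{def:conn} by induction on $m'$, since the diagonal of an $m$-connected morphism is $(m-1)$-connected and hence, by induction, $(m'-1)$-connected. Now \Cref{lem:inconnectedness} tells us that $i_{n-1} \colon \J_{n-1}(X) \to \J_n(X)$ is $(n(k+1) - 2)$-connected, and since $n \geq 2$ and $k \geq 0$ we have $n(k+1) - 2 \geq 2(k+1) - 2 = 2k \geq k$; therefore $i_{n-1}$ is in particular $k$-connected.

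It then remains to transfer $k$-connectedness along $i_{n-1}$, which I would do by applying \enumref{prop:connectednessfacts}{7} to the composite $\J_{n-1}(X) \xrightarrow{\,i_{n-1}\,} \J_n(X) \to \ast$: the morphism $i_{n-1}$ is $k$-connected by the previous paragraph, and the composite $\J_{n-1}(X) \to \ast$ is $k$-connected because $\J_{n-1}(X)$ is a $k$-connected object, so the morphism $\J_n(X) \to \ast$ is $k$-connected, i.e.\ $\J_n(X)$ is $k$-connected. This closes the induction. I do not expect any genuine obstacle here — the argument is purely formal once \Cref{lem:inconnectedness} is available; the only point needing a moment's care is the arithmetic $n(k+1) - 2 \geq k$, which holds exactly in the range $n \geq 2$, and this is precisely why the values $n = 0$ and $n = 1$ must be handled separately as base cases.
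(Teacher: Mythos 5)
Your proof is correct and follows essentially the same route as the paper: both rest on \Cref{lem:inconnectedness} plus the formal stability properties of \Cref{prop:connectednessfacts}, including the implicit monotonicity of connectedness that you spell out. The paper's only difference is bookkeeping: instead of inducting with \enumref{prop:connectednessfacts}{7}, it notes each $i_m$ is at least $(k-1)$-connected, so the composite basepoint $i_{n-1}\cdots i_0 \colon \ast \to \J_n(X)$ is $(k-1)$-connected, and then invokes the section criterion \enumref{prop:connectednessfacts}{3}, which avoids treating $n=1$ as a separate case.
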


\begin{proof}
    If $ n = 0 $, then the claim is clear since $ \J_0(X) = \ast $.
    If $ n > 0 $, then by \Cref{lem:inconnectedness} the morphisms $ i_0,\ldots,i_{n-1} $ are all $ (k-1) $-connected.
    Hence the basepoint
    \begin{equation*}
        i_{n-1} \cdots i_0 \colon \ast \to \J_{n}(X) 
    \end{equation*}
    is $ (k-1) $-connected; equivalently, $ \J_n(X) $ is $ k $-connected \enumref{prop:connectednessfacts}{3}.
\end{proof}

\begin{lemma}[{\cite[Proposition 6]{james-type-thy}}]\label{lem:unconn}
    Let $ \dX $ be an $ \infty $-topos, $ k \geq 0 $ an integer, and $ X \in \dX_{\ast} $ a pointed $ k $-connected object.
    Then the morphism $ u_{n} \colon \J_{n}(X) \to \Omega\Sigma X $ is $ ((n+1)(k+1) - 2) $-connected.
\end{lemma}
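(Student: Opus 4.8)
The plan is to derive \Cref{lem:unconn} from two facts established immediately above: the theorem of \cite[Section 6]{james-type-thy} that the canonical morphism $ u \colon \J(X) \to \Omega\Sigma X $ is an equivalence whenever $ X $ is $ 0 $-connected, and the connectivity estimate for the James filtration maps recorded in \Cref{lem:inconnectedness}. Since $ k \geq 0 $, the object $ X $ is in particular $ 0 $-connected, so $ u $ is an equivalence; as $ u_n = u \circ j_n $, where $ j_n \colon \J_n(X) \to \J(X) = \colim_{m \geq 0} \J_m(X) $ denotes the colimit insertion, it suffices to show that $ j_n $ is $ ((n+1)(k+1) - 2) $-connected.

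To this end, recall from \Cref{lem:inconnectedness} that for each $ m \geq 0 $ the transition morphism $ i_m \colon \J_m(X) \to \J_{m+1}(X) $ is $ ((m+1)(k+1) - 2) $-connected, and note that these bounds are non-decreasing in $ m $ because $ k \geq 0 $. Hence, using that a $ c $-connected morphism is $ c' $-connected for every $ c' \leq c $ together with the stability of $ c $-connected morphisms under composition \enumref{prop:connectednessfacts}{0}, the composite $ \J_n(X) \to \J_m(X) $ is $ ((n+1)(k+1) - 2) $-connected for every $ m \geq n $. Since the full subcategory on $ \{m \geq n\} $ is cofinal, $ \J(X) $ is also the colimit of $ (\J_m(X))_{m \geq n} $, and $ j_n $ is the colimit, in the arrow $ \infty $-category $ \dX^{\Delta^1} $, of the diagram $ m \mapsto \big(\J_n(X) \to \J_m(X)\big) $ with constant source. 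The class of $ ((n+1)(k+1) - 2) $-connected morphisms is the left class of an orthogonal factorization system on $ \dX $ (see \cite[\S3.3]{BlakersMassey} and \cite[\HTTsubsec{6.5.1}]{HTT}), hence stable under colimits in $ \dX^{\Delta^1} $; therefore $ j_n $, and so $ u_n $, is $ ((n+1)(k+1) - 2) $-connected. This reasoning is uniform in $ n \geq 0 $: for $ n = 0 $ it reproduces the estimate that $ \Omega\Sigma X $ is $ k $-connected, and for $ n = 1 $ it reproduces the Freudenthal estimate of \Cref{cor:Freudenthal}.

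The one step that is not purely formal is the stability of $ c $-connected morphisms under colimits in the arrow category, and this is what I expect to be the main --- if minor --- obstacle. A reader who prefers to avoid it can instead follow Brunerie's original argument \cite[Proposition 6]{james-type-thy}, proceeding by induction on $ n $: the cases $ n = 0 $ (where $ \Sigma X $ is $ (k+1) $-connected by \enumref{prop:smashconn}{1}, so $ \Omega\Sigma X $ is $ k $-connected) and $ n = 1 $ (Freudenthal, \Cref{cor:Freudenthal}) are immediate, and for $ n \geq 2 $ one applies the Blakers--Massey Theorem (\Cref{thm:BlakersMassey}) to the pushout square \eqref{james-def-diagram} defining $ \J_n(X) $, feeding in \Cref{cor:Jnconn}, \Cref{lem:inconnectedness}, and the smash-power estimate \enumref{prop:smashconn}{5}. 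In either approach the exponent $ (n+1)(k+1) - 2 $ appears because the gap between $ \J_n(X) $ and $ \J(X) $ is governed by the smash powers $ X^{\smashprod(n+1)}, X^{\smashprod(n+2)}, \dots $, the least connected of which is $ ((n+1)(k+1) - 1) $-connected by \enumref{prop:smashconn}{5}; pinning down this constant exactly in the Blakers--Massey route is the fiddly bookkeeping that the colimit argument circumvents.
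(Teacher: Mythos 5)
Your argument is correct, and it is worth noting that the paper offers no proof of this lemma at all --- it is imported verbatim from Brunerie --- so what you have written is a genuine derivation from the two other imported facts: the theorem that $ u \colon \J(X) \to \Omega\Sigma X $ is an equivalence (valid here since $ k \geq 0 $ forces $ X $ to be $ 0 $-connected) and \Cref{lem:inconnectedness}. The bookkeeping checks out: $ u_n \simeq u \circ j_n $ because the $ u_m $ are compatible with the $ i_m $ by construction; for $ m \geq n $ each $ i_m $ is $ ((m+1)(k+1)-2) $-connected, hence $ ((n+1)(k+1)-2) $-connected, and finite composites inherit this by \enumref{prop:connectednessfacts}{0} together with downward monotonicity of connectedness; and $ \{m \geq n\} $ is cofinal, so $ j_n $ is a pointwise-computed colimit in $ \Fun(\Delta^1,\dX) $ of $ ((n+1)(k+1)-2) $-connected morphisms with constant source. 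The one non-formal ingredient is exactly the one you flag: in an $ \infty $-topos the $ c $-connected morphisms form the left class of the ($ c $-connected, $ c $-truncated) factorization system, and left classes of factorization systems are stable under colimits in the arrow category; both statements are standard and citable (see \cite[\S 3.3]{BlakersMassey} and \cite[\HTTsubsec{5.2.8}]{HTT}), so this is a citation gap rather than a mathematical one. Compared with the inductive Blakers--Massey route you sketch as a fallback (essentially Brunerie's own), your argument is shorter but leans on the James equivalence $ u $; Brunerie's induction produces the connectivity of $ u_n $ without presupposing $ \J(X) \simeq \Omega\Sigma X $, which matters in his development but not here, since the present paper imports that equivalence anyway.
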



\subsection{Splitting the James filtration}\label{subsec:splitJamesconstr}

The purpose of this subsection is to prove the following splitting of the James filtration, which we use in our proof of the metastable EHP sequence (\Cref{metastable}).

\begin{prop}\label{partial-splitting}
    Let $\dX$ be an $\infty$-category with universal pushouts, and let $ X\in \dX_{\ast}$.
    Then there is a splitting
    \begin{equation}\label{partial-expression}
        \Sigma \J_n(X) \simeq \bigvee_{1\leq i \leq n} \Sigma X^{\wedge i} \period
    \end{equation}
    If $ \dX $ is an $ \infty $-topos and $ X $ is $ 0 $-connected, then under the map $ \Sigma u_n \colon \Sigma \J_n(X)\to \Sigma \Omega \Sigma X $, the splitting \eqref{partial-expression}
    is an equivalence onto the first $n$ factors of the splitting $ \Sigma \Omega \Sigma X \equivalent \bigvee_{i \geq 1} \Sigma X^{\wedge i} $ of \Cref{prop:infiniteJamestopos}.
\end{prop}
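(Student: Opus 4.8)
The plan is to prove the splitting \eqref{partial-expression} by induction on $n$, applying the suspension functor to the defining pushout square \eqref{james-def-diagram} of \Cref{james-def} and simplifying, and then to obtain the compatibility with $\Sigma u_n$ by a second induction comparing the recursive construction of $u_n$ with the recursive structure of the splitting. For the first part it is convenient to carry along the refinement that, under the equivalences $\phi_n\colon\Sigma\J_n(X)\simeq\bigvee_{1\leq i\leq n}\Sigma X^{\wedge i}$ being constructed, the morphism $\Sigma i_{n-1}\colon\Sigma\J_{n-1}(X)\to\Sigma\J_n(X)$ is the inclusion of the first $n-1$ summands — equivalently, that $\Sigma i_{n-1}$ exhibits $\Sigma\J_n(X)\simeq\Sigma\J_{n-1}(X)\vee\Sigma X^{\wedge n}$ as a summand inclusion. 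The cases $n=0,1$ are immediate from \enumref{james-def}{1}.

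For the inductive step with $n\geq 2$, write $P_n\colonequals X\times\J_{n-2}(X)\coproductlimits^{\J_{n-2}(X)}\J_{n-1}(X)$. Since suspension preserves pushouts, applying it to \eqref{james-def-diagram} gives $\Sigma\J_n(X)\simeq\Sigma\J_{n-1}(X)\coproductlimits^{\Sigma P_n}\Sigma(X\times\J_{n-1}(X))$, where $\Sigma P_n\simeq\Sigma(X\times\J_{n-2}(X))\coproductlimits^{\Sigma\J_{n-2}(X)}\Sigma\J_{n-1}(X)$. Combining \enumref{cofiber-product}{2}, \Cref{lem:suspensionsmash}, and the distributivity of the smash product over wedge sums (which follows from universality of pushouts as in the proof of \Cref{lem:suspensionsmash}) gives natural equivalences $\Sigma(X\times Z)\simeq\Sigma X\vee(X\smashprod\Sigma Z)\vee\Sigma Z$; one also records, by inspecting the proof of \Cref{cofiber-product} (the diagram \eqref{eq:4by4}), that this equivalence is natural in $Z$ in such a way that $\Sigma(\id_X\times f)$ acts as $\id_{\Sigma X}\vee(X\smashprod\Sigma f)\vee\Sigma f$, that the $\Sigma Z$-summand inclusion is $\Sigma$ of $(\ast,\id_Z)$, and that the $\Sigma X$-summand inclusion is $\Sigma$ of $(\id_X,\ast)$. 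Feeding these into the two spans above and repeatedly using $A\vee B\coproductlimits^B C\simeq A\vee C$ (for a summand inclusion $B\hookrightarrow A\vee B$) yields $\Sigma P_n\simeq\Sigma X\vee(X\smashprod\Sigma\J_{n-2}(X))\vee\Sigma\J_{n-1}(X)$, with the map to $\Sigma(X\times\J_{n-1}(X))$ being $\id$ on the $\Sigma X$ and $\Sigma\J_{n-1}(X)$ summands and $X\smashprod\Sigma i_{n-2}$ on the middle summand. By the inductive hypothesis for $\Sigma i_{n-2}$ together with $X\smashprod\Sigma X^{\wedge(n-1)}\simeq\Sigma X^{\wedge n}$ (\Cref{lem:suspensionsmash}), this map is the inclusion of a summand with complement $\Sigma X^{\wedge n}$. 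Hence $\Sigma\J_n(X)\simeq\Sigma\J_{n-1}(X)\vee\Sigma X^{\wedge n}$ with $\Sigma i_{n-1}$ the evident summand inclusion, and combining with $\Sigma\J_{n-1}(X)\simeq\bigvee_{1\leq i\leq n-1}\Sigma X^{\wedge i}$ completes the step.

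For the second assertion, let $\dX$ be an $\infty$-topos and $X$ a $0$-connected object, so the morphism $u\colon\J(X)\to\Omega\Sigma X$ is an equivalence \cite{james-type-thy}, hence so is $\Sigma u$. Since suspension preserves the sequential colimit $\J(X)=\colim_n\J_n(X)$ and the $\phi_n$ are compatible with the $\Sigma i_n$, they assemble to an equivalence $\Sigma\J(X)\simeq\bigvee_{i\geq 1}\Sigma X^{\wedge i}$ for which each $\Sigma\J_n(X)\to\Sigma\J(X)$ is the inclusion of the first $n$ summands. It thus suffices to prove, by induction on $n$, that $\phi_n$ followed by the coproduct insertion $\bigvee_{1\leq i\leq n}\Sigma X^{\wedge i}\hookrightarrow\bigvee_{i\geq 1}\Sigma X^{\wedge i}$ and then by the equivalence $c_X$ of \Cref{ntn:Jamescomparison} and \Cref{prop:infiniteJamestopos} agrees with $\Sigma u_n$. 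One uses that $c_X$ restricted to $\bigvee_{1\leq i\leq n}$ is, by construction, the inclusion of the first summand of the splitting of \Cref{jamesredux}, and that $u_n$ is assembled from $u_{n-1}$, the unit $u_1$, and the multiplication $m$ on $\Omega\Sigma X$; the inductive step then compares the pushout \eqref{james-def-diagram} defining $\J_n(X)$ with the pushout of \Cref{lem:pr2pushout}, twisted by $m$, presenting $\Sigma\Omega\Sigma X$ — these are intertwined because $u_n$ carries $\alpha_{n-1}$ to $m\circ(u_1\times u_{n-1})$ by definition. The base case $n=1$ is the statement that the unit $X\to\Omega\Sigma X$ suspends to the $\Sigma X$-summand inclusion in the fundamental James splitting of \Cref{james}, which one reads off from the proof of \Cref{lem:pr2pushout} (there $a_X$ restricts to the unit on $X\times\ast$) together with \Cref{cor:cofibpr2}.

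The step I expect to be the main obstacle is not geometric but the coherent bookkeeping: in both inductions one must track through the recursion the specific wedge summands and the specific morphisms relating them, rather than merely the existence of abstract equivalences. The two delicate points are upgrading \enumref{cofiber-product}{2} to the natural, section- and projection-compatible form used above, and choosing the equivalences of \Cref{james} and \Cref{jamesredux} so that they are intertwined by the group structure on $\Omega\Sigma X$; with these in hand the remaining argument is routine.
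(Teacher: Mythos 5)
Your argument is correct in substance, but it takes a genuinely different route from the paper's for the splitting \eqref{partial-expression}. The paper runs a two-layer induction: it invokes \Cref{Jamescofib} to get the cofiber sequence $\Sigma\J_n(X)\to\Sigma\J_{n+1}(X)\to\Sigma X^{\smashprod n+1}$ and the duals of \Cref{loop-monoid,lem:split} (suspensions are cogroup objects, and a cofiber sequence of cogroups with a retraction splits), so that it only has to manufacture a retraction $r_n$ of $\Sigma i_n$; that retraction is then built recursively as $(\id_X\smashprod r_{n-1})\vee\id\vee\id$ on the suspended defining pushout square, using exactly the identification of the left vertical map that you also use. You instead strengthen the inductive hypothesis to record that $\Sigma i_{n-1}$ is a summand inclusion with complement $\Sigma X^{\smashprod n}$, identify the left leg of the suspended square \eqref{james-def-diagram} as $\id\vee(X\smashprod\Sigma i_{n-2})\vee\id$ via \Cref{cofiber-product} and \Cref{lem:suspendcorner}-type splittings, and then read the splitting off directly by cobase change of a summand inclusion; this avoids \Cref{Jamescofib} and the dual Splitting Lemma entirely, at the price of carrying the summand-inclusion refinement and the naturality bookkeeping through the induction --- bookkeeping which the paper's proof also relies on (its assertion about how the left vertical map decomposes) but does not spell out, so you are not below the paper's own level of rigor there. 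For the second assertion (that $\Sigma u_n$ carries \eqref{partial-expression} onto the first $n$ factors of the splitting of \Cref{prop:infiniteJamestopos}), the paper's written proof is silent, so your outline --- passing to $\J(X)\simeq\Omega\Sigma X$, using compatibility of the $\phi_n$ with $\Sigma i_n$, and comparing recursively with $c_X$ via \Cref{lem:pr2pushout} and the multiplicativity built into $u_n$ --- is more than the paper offers; it is still only a sketch at the crucial comparison of the filtration splitting with the iterated splitting of \Cref{jamesredux}, but the plan is sound and this is precisely the coherence point you flag.
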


The proof of \Cref{partial-splitting} requires some preliminaries.
We need to relate the cofiber of $ i_n $ to smash powers of $ X $; before doing so we need some preparatory lemmas.

\begin{lemma}\label{cofiber-product-inclusion}
    Let $\dX$ be an $\infty$-category with finite products and pushouts, and let $X,Y \in \dX_{\ast}$.
    Then there is a cofiber sequence
    \begin{equation*}
        \begin{tikzcd}[sep=1.5em]
            Y \arrow[r] & \cofib((\id_X, \ast) \colon X\to X\times Y) \arrow[r] & X\wedge Y \period 
        \end{tikzcd}
    \end{equation*}
\end{lemma}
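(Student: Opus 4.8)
The plan is to identify the middle term $C \colonequals \cofib\paren{(\id_X,\ast)\colon X\to X\times Y}$ as a two-step pushout built from the comparison morphism $\kappa\colon X\wedgesum Y\to X\times Y$, and then to paste pushout squares. Recall that $X\smashprod Y = \cofib(\kappa)$ by definition and that the coproduct inclusion $\iota_X\colon X\to X\wedgesum Y$ satisfies $\kappa\circ\iota_X = (\id_X,\ast)\colon X\to X\times Y$.

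The first input is the elementary fact --- immediate from the universal property of the coproduct in $\dX_{\ast}$, in the same way as \Cref{lem:wedgepushout} --- that the square
\[
\begin{tikzcd}[sep=2em]
X \arrow[r, "\iota_X"] \arrow[d] & X\wedgesum Y \arrow[d, "{(\ast,\id_Y)}"] \\
\ast \arrow[r] & Y
\end{tikzcd}
\]
is a pushout, i.e. that the cofiber of $\iota_X$ is $Y$. Since $(\id_X,\ast) = \kappa\circ\iota_X$, placing this square to the left of the defining pushout square of $C$ and applying the pasting lemma for pushout squares (the left square and the outer rectangle are pushouts, hence so is the right square) shows that
\[
\begin{tikzcd}[sep=2em]
X\wedgesum Y \arrow[r, "\kappa"] \arrow[d, "{(\ast,\id_Y)}"'] & X\times Y \arrow[d, "q"] \\
Y \arrow[r, "j"] & C
\end{tikzcd}
\]
is a pushout, where $q$ is the cofiber projection and $j$, the induced morphism, is the composite $Y\xrightarrow{(\ast,\id_Y)}X\times Y\xrightarrow{q}C$.

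I would then paste this last square (on top) with the pushout square defining $\cofib(j) = C\coproductlimits^{Y}\ast$ (on the bottom). The outer rectangle of the result is the pushout square
\[
\begin{tikzcd}[sep=2em]
X\wedgesum Y \arrow[r, "\kappa"] \arrow[d] & X\times Y \arrow[d] \\
\ast \arrow[r] & \cofib(j)
\end{tikzcd}
\]
(its left-hand vertical composite $X\wedgesum Y\xrightarrow{(\ast,\id_Y)}Y\to\ast$ being the canonical morphism to $\ast$), so $\cofib(j)\simeq\cofib(\kappa) = X\smashprod Y$, naturally in $X$ and $Y$, and under this identification the cofiber projection $C\to\cofib(j)$ becomes the natural morphism $C\to X\smashprod Y$. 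This is exactly the asserted cofiber sequence $Y\to\cofib\paren{(\id_X,\ast)\colon X\to X\times Y}\to X\smashprod Y$.

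There is no substantive obstacle; the only thing requiring care is the bookkeeping of the (overloaded) notation $(\id_X,\ast)$ and $(\ast,\id_Y)$ and verifying that the morphism $Y\to C$ produced by the first pasting really is $q\circ(\ast,\id_Y)$. Naturality in $X$ and $Y$ is automatic since every square above is built functorially from $X$ and $Y$. Alternatively, one can obtain the statement in a single step from \Cref{lem:3by3} applied to a suitable $3\times 3$ diagram whose rows compute $X\smashprod Y$ and whose columns compute $\cofib(j)$, but the pasting argument above seems cleanest.
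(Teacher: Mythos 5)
Your argument is correct. It differs in organization from the paper's proof, though both ultimately rest on the same commutation of pushouts. The paper writes down a single map of horizontal cofiber sequences, from $X \xrightarrow{=} X \to \ast$ to $X \wedgesum Y \to X \times Y \to X \smashprod Y$, with leftmost vertical map the coproduct insertion, and then takes vertical cofibers, invoking the fact (an instance of \Cref{lem:3by3}) that cofibers of a map of cofiber sequences again form a cofiber sequence; this is essentially the ``alternative'' you sketch in your last sentence. Your main route instead factors $(\id_X,\ast)$ as $X \to X \wedgesum Y \xrightarrow{\kappa} X \times Y$ and runs two applications of the pasting law for pushout squares: first to identify $\cofib((\id_X,\ast))$ with the pushout of $Y \leftarrow X \wedgesum Y \xrightarrow{\kappa} X \times Y$, then to identify the cofiber of the resulting map $j \colon Y \to \cofib((\id_X,\ast))$ with $\cofib(\kappa) = X \smashprod Y$. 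What your version buys is an explicit identification of the two maps in the cofiber sequence (in particular $j \simeq q \circ (\ast,\id_Y)$, which you verify correctly from the universal property of the left-hand pushout), at the cost of the mild bookkeeping you flag; the paper's version is shorter and delivers the sequence in one stroke but leaves the maps implicit. One small logical note: in your first pasting you use the direction ``left square and outer rectangle pushouts imply right square pushout,'' so you must first exhibit the right square as a commutative square (with $j$ defined as you do) and check that the outer rectangle, with its induced null-homotopy, is the defining pushout of $\cofib((\id_X,\ast))$ --- which works since $j$ composed with $\ast \to Y$ is the basepoint; alternatively one can avoid this by forming the pushout of $Y \leftarrow X \wedgesum Y \to X \times Y$ first and pasting in the other direction to identify it with $\cofib((\id_X,\ast))$. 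Like the paper's proof, yours only uses finite products and pushouts, matching the hypotheses of the lemma.
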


\begin{proof}
    There is a map of cofiber sequences
    \begin{equation}\label{diag:cofibprod}
        \begin{tikzcd}[sep=2em]
	       X \arrow[r, equals] \arrow[d] & X \arrow[d, "{(\id_X, \ast)}"] \arrow[r] & \ast \arrow[d] \\
            X \vee Y \arrow[r] & X \times Y \arrow[r] & X\wedge Y \comma
        \end{tikzcd}
    \end{equation}
    where the leftmost vertical map is the coproduct insertion.
    The cofiber of the coproduct insertion $X\to X\vee Y$ is $Y$, and the cofiber of the basepoint $\ast\to X\wedge Y $ is $ X \wedge Y $.
    To conclude, note that taking vertical cofibers in \eqref{diag:cofibprod} results in a cofiber sequence.
\end{proof}

The following is a straightforward application of \Cref{lem:3by3}.

\begin{lemma}\label{total-cofiber}
    Let $\dX$ be an $\infty$-category with pushouts and a terminal object and let
    \begin{equation*}
        \begin{tikzcd}[sep=2em]
	       A \arrow[r] \arrow[d] & B \arrow[d]\\
           C \arrow[r] & D
        \end{tikzcd}
    \end{equation*}
    be a commutative square in $ \dX_{\ast} $.
    Then there is a natural equivalence
    \begin{equation*}
        \cofib\big(B \coproduct^A C \to D \big) \equivalent \cofib\big(\cofib(A\to C) \to \cofib(B \to D)\big) \period
    \end{equation*}
\end{lemma}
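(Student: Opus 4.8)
The plan is to realize both sides of the asserted equivalence as the two iterated‑pushout descriptions of the colimit of a single $3 \times 3$ diagram, and then apply \Cref{lem:3by3} in the $\infty$-category $\dX_{\ast}$ (which inherits pushouts from $\dX$ and in which $\ast$ is a zero object). Concretely, I would form the grid
\[
    \begin{tikzcd}[sep=2em]
        \ast & \ast \arrow[l] \arrow[r] & \ast \\
        B \arrow[u] \arrow[d] & A \arrow[l] \arrow[r] \arrow[u] \arrow[d] & C \arrow[u] \arrow[d, equals] \\
        D & C \arrow[l] \arrow[r, equals] & C \comma
    \end{tikzcd}
\]
in which the middle row is the span $B \leftarrow A \to C$ whose pushout defines $B \sqcup^{A} C$; the three maps into the top row are the canonical maps to the zero object; the three maps into the bottom row are $B \to D$, $A \to C$, and $\id_{C}$; and the bottom row is $D \leftarrow C \xrightarrow{\id} C$. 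The only thing to verify is that the four constituent $2 \times 2$ squares commute: the two top squares commute because $\ast$ is terminal, the bottom-right one commutes trivially, and the bottom-left one commutes precisely because the square in the statement commutes.

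Next I would read off the two iterated pushouts supplied by \Cref{lem:3by3}. Forming pushouts along the rows yields the span $\ast \leftarrow B \sqcup^{A} C \to D$ — here the rightmost pushout is $D$ because one leg of the bottom row is an equivalence — and unwinding identifies the map $B \sqcup^{A} C \to D$ with the canonical comparison morphism; hence the colimit is $\cofib(B \sqcup^{A} C \to D)$. Forming pushouts along the columns yields the span $\cofib(B \to D) \leftarrow \cofib(A \to C) \to \ast$ — again the rightmost pushout is $\ast$ because one leg is an equivalence — and the map $\cofib(A \to C) \to \cofib(B \to D)$ is the morphism on cofibers induced by the original square; hence the colimit is $\cofib(\cofib(A \to C) \to \cofib(B \to D))$. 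Comparing the two computations via \Cref{lem:3by3} gives the claimed equivalence, which is natural in the square since the construction of the grid is functorial.

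There is no genuine obstacle here; the only point requiring a little care is the bookkeeping around the two degenerate pushouts — checking that the bottom row really has pushout $D$ and the right-hand column really has pushout $\ast$, and that the induced maps on $B \sqcup^{A} C$ and on $\cofib(A \to C)$ are the canonical morphisms named in the statement rather than some variant.
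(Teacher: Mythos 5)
Your proof is correct and is exactly the argument the paper intends: the paper simply states that \Cref{total-cofiber} is a straightforward application of \Cref{lem:3by3}, and your $3\times 3$ grid with the two degenerate rows/columns (using that $D \sqcup^{C} C \simeq D$ and $\ast \sqcup^{C} C \simeq \ast$) is the standard way to carry that out.
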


We are now ready to show that $ \cofib(i_n) \equivalent X^{\smashprod n+1} $.

\begin{prop}\label{Jamescofib}
    Let $\dX$ be an $\infty$-category with universal pushouts, and let $ X\in \dX_{\ast} $.
    Then for each integer $ n \geq 0 $, there is a natural equivalence
    \begin{equation*}
        \cofib(i_n \colon \J_n(X) \to \J_{n+1}(X)) \equivalent X^{\smashprod n+1} \period
    \end{equation*}
    Moreover, the composite
    \begin{equation*}
        \begin{tikzcd}[sep=1.5em]
            X^{\times n+1} \arrow[r, "a_n"] & \J_{n+1}(X) \arrow[r] & X^{\smashprod n+1} 
        \end{tikzcd}
    \end{equation*}
    is equivalent to the canonical map $ X^{\times n+1} \to X^{\smashprod n+1} $.
\end{prop}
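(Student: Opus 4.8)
The plan is to induct on $ n $, peeling off one layer of the James filtration at a time and showing that each layer contributes exactly one extra smash factor of $ X $. The base case $ n = 0 $ is immediate: $ \J_0(X) = \ast $ and $ i_0 \colon \ast \to X = \J_1(X) $ is the basepoint, so $ \cofib(i_0) \simeq X = X^{\smashprod 1} $, and the composite $ X^{\times 1} \to \J_1(X) \to \cofib(i_0) $ is the identity of $ X $, which is the canonical map $ X^{\times 1} \to X^{\smashprod 1} $.

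For the inductive step ($ n \geq 1 $), I would first reduce $ \cofib(i_n) $ to a convenient shape. Write $ P \colonequals X \times \J_{n-1}(X) \sqcup^{\J_{n-1}(X)} \J_n(X) $ for the upper-left corner of the defining pushout square \eqref{james-def-diagram} taken with index $ n+1 $, and let $ \ell \colon P \to X \times \J_n(X) $ and $ t \colon P \to \J_n(X) $ be its left and top morphisms. Since $ \J_{n+1}(X) \simeq \J_n(X) \sqcup^{P} (X \times \J_n(X)) $ along $ t $ and $ \ell $, pasting pushout squares gives a natural equivalence $ \cofib(i_n) \simeq \cofib(\ell) $ under which the composite $ X \times \J_n(X) \xrightarrow{\alpha_n} \J_{n+1}(X) \to \cofib(i_n) $ becomes the cofiber projection. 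Unwinding \cref{james-def} also shows that the morphism $ X^{\times n+1} \to \J_{n+1}(X) $ appearing in the statement factors as $ X \times X^{\times n} \xrightarrow{\id_X \times a_n} X \times \J_n(X) \xrightarrow{\alpha_n} \J_{n+1}(X) $, with $ a_n \colon X^{\times n} \to \J_n(X) $ as in \cref{james-def}. So it remains to compute $ \cofib(\ell) $ and the composite $ X \times X^{\times n} \xrightarrow{\id_X \times a_n} X \times \J_n(X) \to \cofib(\ell) $.

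To compute $ \cofib(\ell) $, recall that $ \ell $ is the morphism induced on $ P = \J_n(X) \sqcup^{\J_{n-1}(X)} (X \times \J_{n-1}(X)) $ by the square with horizontal maps $ \J_{n-1}(X) \xrightarrow{i_{n-1}} \J_n(X) $ and $ X \times \J_{n-1}(X) \xrightarrow{\id_X \times i_{n-1}} X \times \J_n(X) $ and vertical maps the inclusions $ (\ast, \id) $. Applying \cref{total-cofiber} to this square reduces $ \cofib(\ell) $ to the cofiber of a morphism $ \cofib\big((\ast,\id)\colon \J_{n-1}(X) \to X \times \J_{n-1}(X)\big) \to \cofib\big((\ast,\id)\colon \J_n(X) \to X \times \J_n(X)\big) $. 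By \cref{cofiber-product-inclusion} (after swapping the two factors), there is for each pointed $ Y $ a cofiber sequence $ X \to \cofib\big((\ast,\id)\colon Y \to X \times Y\big) \to X \smashprod Y $ that is natural in $ Y $, with first term independent of $ Y $ and last term functorial via $ \id_X \smashprod(-) $. Passing to horizontal cofibers in the resulting $ 3 \times 3 $ diagram kills the top row $ \cofib(X \xrightarrow{\id} X) = \ast $, and — using that $ X \smashprod(-) $ preserves pushouts, which follows from universality of pushouts as in the proof of \cref{lem:suspensionsmash} — I would obtain natural equivalences
\[
  \cofib(\ell) \simeq \cofib\big(\id_X \smashprod i_{n-1} \colon X \smashprod \J_{n-1}(X) \to X \smashprod \J_n(X)\big) \simeq X \smashprod \cofib(i_{n-1}) \simeq X \smashprod X^{\smashprod n} = X^{\smashprod n+1},
\]
the last equivalence being the inductive hypothesis. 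This gives the first assertion.

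For the second assertion, the point is that the comparison morphism $ X \wedgesum \J_n(X) \to X \times \J_n(X) $ factors through $ \ell $ (immediate on each wedge summand from the pushout square defining $ P $), so the cofiber projection $ X \times \J_n(X) \to \cofib(\ell) $ kills $ X \wedgesum \J_n(X) $ and hence factors through the smash quotient; tracked through the equivalences above it becomes $ \id_X \smashprod q $ precomposed with the smash quotient, where $ q \colon \J_n(X) \to \cofib(i_{n-1}) $ is the cofiber projection. By naturality of the smash quotient, the composite $ X \times X^{\times n} \xrightarrow{\id_X \times a_n} X \times \J_n(X) \to \cofib(\ell) $ then equals $ X^{\times n+1} \to X \smashprod X^{\times n} \xrightarrow{\id_X \smashprod (q\circ a_n)} X \smashprod \cofib(i_{n-1}) $; since $ q\circ a_n \colon X^{\times n} \to \cofib(i_{n-1}) \simeq X^{\smashprod n} $ is the canonical map by the inductive hypothesis, this composite is the canonical map $ X^{\times n+1} \to X \smashprod X^{\smashprod n} = X^{\smashprod n+1} $. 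The step I expect to need the most care is exactly this last identification of the cofiber projection after tracing through the equivalence of \cref{total-cofiber} — a diagram chase via \cref{lem:3by3}; the rest is formal.
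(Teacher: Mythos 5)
Your argument is correct and is essentially the paper's own proof: both induct on $n$, identify $\cofib(i_n)$ with the cofiber of the left vertical morphism of the defining pushout square \eqref{james-def-diagram}, apply \cref{total-cofiber} and \cref{cofiber-product-inclusion} to reduce this to $\cofib(\id_X \smashprod i_{n-1})$, and then use universality of pushouts (as in \cref{lem:suspensionsmash}) to identify that cofiber with $X \smashprod \cofib(i_{n-1})$ and conclude by the inductive hypothesis. The only difference is that you also sketch the verification of the ``moreover'' clause identifying the composite $X^{\times n+1} \to \J_{n+1}(X) \to X^{\smashprod n+1}$ with the canonical quotient, a point the paper's proof leaves implicit; your outline of that diagram chase is sound.
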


\begin{proof}
    We prove the claim by induction on $ n $.
    For the base case, note that since the morphism $ i_0 $ is the basepoint $ \ast \to X $, the cofiber of $ i_0 $ is $ X $.
    For the inductive step we assume that $ \cofib(i_n) \equivalent X^{\smashprod n + 1} $ and show that $ \cofib(i_{n+1}) \equivalent X^{\smashprod n + 2} $.
    From the defining pushout square \eqref{james-def-diagram}, we see that 
    \begin{equation*}
        \cofib(i_{n+1}) \equivalent \cofib\paren{X\times \J_n(X) \coproduct^{\J_n(X)} \J_{n+1}(X)\to X\times \J_{n+1}(X)} \period
    \end{equation*}
    Applying \Cref{total-cofiber} shows that
    \begin{equation*}
        \cofib(i_{n+1}) \simeq \cofib\Big(\cofib(\J_n(X) \to X\times \J_n(X)) \to \cofib(\J_{n+1}(X) \to X\times \J_{n+1}(X))\Big) \comma
    \end{equation*}
    where the map of cofibers is induced by the map $ i_n \colon \J_n(X) \to \J_{n+1}(X) $. 
    By \cref{cofiber-product-inclusion}, there is a cofiber sequence
    \begin{equation*}
        X \to \cofib(\J_n(X) \to X\times \J_n(X)) \to X\wedge \J_n(X) \rlap{\ ;}
    \end{equation*}
    moreover, the map $ i_n \colon \J_n(X) \to \J_{n+1}(X) $ induces a map of cofiber sequences
    \begin{equation}\label{diag:cofibmapjames}
        \begin{tikzcd}[sep=2em]
    	    X \arrow[d, equals] \arrow[r] & \cofib(\J_n(X) \to X\times
    	    \J_n(X)) \arrow[d] \arrow[r] & X\wedge \J_n(X) \arrow[d, "\id_{X} \smashprod i_n"] \\
    	    X \arrow[r] & \cofib(\J_{n+1}(X) \to X\times \J_{n+1}(X)) \arrow[r] &
    	    X\wedge \J_{n+1}(X) \period
        \end{tikzcd}
    \end{equation}
    Since the leftmost vertical map is the identity, taking vertical cofibers in the map of cofiber sequences \eqref{diag:cofibmapjames} produces an equivalence between the vertical cofibers of the middle and right vertical maps.
    Since the cofiber of the middle vertical map is $ \cofib(i_{n+1}) $, we find that
    \begin{equation*}
        \cofib(i_{n+1}) \simeq \cofib\big(\id_{X} \smashprod i_n \colon X \wedge \J_n(X) \to X\wedge \J_{n+1}(X)\big) \period
    \end{equation*}
    Since pushouts in $ \dX $ are universal we have a natural equivalence
    \begin{equation*}
        \cofib(\id_{X} \smashprod i_{n} \colon X \wedge \J_n(X) \to X\wedge \J_{n+1}(X)) \simeq X \wedge \cofib(i_{n} \colon \J_n(X) \to \J_{n+1}(X))
    \end{equation*}
    By the inductive hypothesis, $\cofib(i_n) \simeq X^{\wedge n+1}$, so $ \cofib(i_{n+1}) \simeq  X^{\wedge n+2}$, as desired.
\end{proof}

Next we split the term 
\begin{equation*}
    \displaystyle \Sigma\Big(X \times \J_{n-1}(X) \coproductlimits^{\J_{n-1}(X)} \J_{n}(X)\Big)
\end{equation*}
in the pushout square \eqref{james-def-diagram} defining $ \Sigma \J_{n+1}(X) $ and prove \Cref{partial-splitting}.

\begin{lemma}\label{lem:suspendcorner}
    Let $ \dX $ be an $ \infty $-category with universal pushouts, $ X \in \dX_{\ast} $, and $ n \geq 1 $ an integer.
    Then there is a natural equivalence
    \begin{equation*}
        \displaystyle \Sigma\Big(X \times \J_{n-1}(X) \coproductlimits^{\J_{n-1}(X)} \J_{n}(X)\Big) \equivalent \Sigma(X \smashprod \J_{n-1}(X)) \wedgesum \Sigma X \wedgesum \Sigma \J_{n}(X) \period
    \end{equation*}
\end{lemma}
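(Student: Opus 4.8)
The plan is to compute the suspension by combining three facts already in hand: that $\Sigma$ preserves pushouts, the splitting $\Sigma(X \times Y) \equivalent \Sigma(X \smashprod Y) \wedgesum \Sigma X \wedgesum \Sigma Y$ of \enumref{cofiber-product}{2}, and the elementary observation that for pointed objects $A, B, C$ and a morphism $B \to C$, the pushout of $A \wedgesum B \leftarrow B \to C$ along the coproduct insertion of $B$ is naturally $A \wedgesum C$ (since $\wedgesum$ is the coproduct in $\dX_{\ast}$, so $A \wedgesum B \equivalent A \coproductlimits^{\ast} B$, and one applies the pasting law for pushouts).

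Concretely, I would first unwind \eqref{james-def-diagram} to see that the object in the statement is the pushout $P \colonequals X \times \J_{n-1}(X) \coproductlimits^{\J_{n-1}(X)} \J_{n}(X)$ formed along $(\ast, \id) \colon \J_{n-1}(X) \to X \times \J_{n-1}(X)$ and $i_{n-1} \colon \J_{n-1}(X) \to \J_{n}(X)$. Since $\Sigma$ is itself defined by a pushout and colimits commute with colimits (cf.\ \Cref{lem:3by3}), applying $\Sigma$ yields $\Sigma P \equivalent \Sigma(X \times \J_{n-1}(X)) \coproductlimits^{\Sigma \J_{n-1}(X)} \Sigma \J_{n}(X)$, with legs $\Sigma(\ast, \id)$ and $\Sigma i_{n-1}$. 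I would then rewrite the left term by \enumref{cofiber-product}{2} as $\Sigma(X \smashprod \J_{n-1}(X)) \wedgesum \Sigma X \wedgesum \Sigma \J_{n-1}(X)$ and push out along the $\Sigma \J_{n-1}(X)$-summand using the third fact above, which replaces that summand by $\Sigma \J_{n}(X)$ and produces $\Sigma(X \smashprod \J_{n-1}(X)) \wedgesum \Sigma X \wedgesum \Sigma \J_{n}(X)$; a final harmless reordering of the wedge summands gives the asserted equivalence.

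The step requiring real care --- and the main obstacle --- is identifying the leg $\Sigma(\ast, \id) \colon \Sigma \J_{n-1}(X) \to \Sigma(X \times \J_{n-1}(X))$ under the equivalence of \enumref{cofiber-product}{2}: I must show it agrees with the coproduct insertion of the $\Sigma \J_{n-1}(X)$-summand, at least up to precomposition with an automorphism of $\Sigma \J_{n-1}(X)$ (which does not affect the pushout, since it merely reparametrizes the source of the span). I plan to deduce this from naturality: the equivalence $\Sigma(X' \times Y') \equivalent \Sigma(X' \smashprod Y') \wedgesum \Sigma X' \wedgesum \Sigma Y'$ is built from \Cref{pushout} and coproduct insertions, hence is natural in the pair $(X', Y') \in \dX_{\ast} \times \dX_{\ast}$; evaluating the naturality square at the morphism whose first component is the unique map $\ast \to X$ and whose second component is $\id_{\J_{n-1}(X)}$, and using $\ast \times \J_{n-1}(X) \equivalent \J_{n-1}(X)$ and $\ast \smashprod \J_{n-1}(X) \equivalent \ast$, identifies $\Sigma(\ast, \id)$ with the composite of an automorphism of $\Sigma \J_{n-1}(X)$ followed by the map with components $(\,\ast \to \Sigma(X \smashprod \J_{n-1}(X)),\ \ast \to \Sigma X,\ \id_{\Sigma \J_{n-1}(X)}\,)$ --- that is, the desired summand insertion. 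With this identification in place, everything else is a routine application of the pasting law for pushouts.
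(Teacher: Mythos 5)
Your proposal is correct and follows essentially the same route as the paper's proof: suspend the defining pushout square \eqref{james-def-diagram}, split $\Sigma(X \times \J_{n-1}(X))$ via \enumref{cofiber-product}{2}, identify the leg $\Sigma(\ast,\id)$ with the summand insertion, and compute the resulting pushout. The only difference is that you justify that identification via naturality of the splitting (the paper simply asserts it), and your observation that an automorphism of $\Sigma\J_{n-1}(X)$ on the source of the span is harmless is a correct way to close that gap.
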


\begin{proof}
    Since suspension preserves pushouts, we have a pushout square
    \begin{equation}\label{diag:susp-corner}
        \begin{tikzcd}[sep=2em]
            \Sigma \J_{n-1}(X) \arrow[r] \arrow[d] \arrow[dr, phantom, very near end, "\ulcorner", xshift=2em, yshift=-0.25em] & \Sigma\J_{n}(X) \arrow[d] \\
            \Sigma(X \times \J_{n-1}(X)) \arrow[r] & \Sigma\Big(X \times \J_{n-1}(X) \coproductlimits^{\J_{n-1}(X)} \J_{n}(X)\Big) \period
        \end{tikzcd}
    \end{equation}
    Under the equivalence 
    \begin{equation*}
        \Sigma(X \times \J_{n-1}(X)) \simeq \Sigma (X\wedge \J_{n-1}(X)) \vee \Sigma X \vee \Sigma \J_{n-1}(X)
    \end{equation*}
    of \Cref{cofiber-product}, the left vertical map in \eqref{diag:susp-corner} is the coproduct insertion.
    Hence on pushouts we see that
    \begin{equation*}
        \Sigma\Big(X \times \J_{n-1}(X) \coproductlimits^{\J_{n-1}(X)} \J_{n}(X)\Big) \simeq \Sigma (X\wedge \J_{n-1}(X)) \vee \Sigma X \vee \Sigma \J_{n}(X) \period \qedhere
    \end{equation*}
\end{proof}

\begin{proof}[Proof of \Cref{partial-splitting}]
    We prove the claim by induction on $ n $.
    The base case where $ n = 1 $ is obvious.
    For the inductive step, assume that $ n \geq 1 $ and $ \Sigma \J_n(X) \equivalent \bigvee_{i=1}^n \Sigma X^{\smashprod n} $.
    By \Cref{Jamescofib} we have a cofiber sequence
    \begin{equation*}
        \begin{tikzcd}[sep=1.5em]
            \J_{n}(X) \arrow[r, "i_{n}"] & \J_{n+1}(X) \arrow[r] & X^{\wedge n+1} \comma
        \end{tikzcd}
    \end{equation*}
    so the inductive hypothesis and the duals of \Cref{loop-monoid,lem:split}, it suffices to define a retraction
    \begin{equation*}
        r_n \colon \Sigma \J_{n+1}(X) \to \Sigma \J_{n}(X)
    \end{equation*}
    of the map $ \Sigma i_{n} $.

    We construct the retractions $ r_n \colon \Sigma \J_{n+1}(X) \to \Sigma \J_{n}(X) $ inductively.
    For the base case, the retraction $ r_0 \colon \Sigma X \to \ast $ of $ \Sigma i_0 $ is the unique morphism.
    For the inductive step, assume that $ n \geq 1 $ and we have constructed a retraction $ r_{n-1}\colon\Sigma \J_n(X) \to \Sigma \J_{n-1}(X) $ of $ \Sigma i_{n-1} $; we use this to construct a retraction $ r_n $ of $ \Sigma i_n $.
    Since suspension preserves pushouts, suspending the defining pushout square \eqref{james-def-diagram} yields a pushout square
    \begin{equation}\label{diag:susp-james-diagram}
        \begin{tikzcd}[sep=2em]
    		\displaystyle \Sigma\Big(X \times \J_{n-1}(X) \coproductlimits^{\J_{n-1}(X)} \J_{n}(X)\Big) \arrow[r] \arrow[d] \arrow[dr, phantom, very near end, "\ulcorner", xshift=1em, yshift=-0.25em] & \Sigma \J_{n}(X) \arrow[d, "\Sigma i_{n}"] \\
    		\Sigma(X \times \J_{n}(X)) \arrow[r, "\Sigma \alpha_{n}"'] & \Sigma \J_{n+1}(X) \period
        \end{tikzcd}
    \end{equation}
    In order to define a retraction of $\Sigma i_n$, it suffices to define a retraction of the left vertical map in \eqref{diag:susp-james-diagram}, i.e., it suffices to define a retraction
    \begin{equation*}
        \Sigma(X\times \J_n(X)) \to \Sigma\Big(X \times \J_{n-1}(X) \coproductlimits^{\J_{n-1}(X)} \J_{n}(X)\Big) \period
    \end{equation*}
    
    By \cref{cofiber-product,lem:suspendcorner}, we have equivalences
    \begin{align*}
        \Sigma(X \times \J_{n}(X)) &\simeq \Sigma (X\wedge \J_{n}(X)) \vee \Sigma X \vee \Sigma \J_{n}(X) \\ 
        \intertext{and}
         \Sigma\Big(X \times \J_{n-1}(X) \coproductlimits^{\J_{n-1}(X)} \J_{n}(X)\Big) &\simeq \Sigma (X\wedge \J_{n-1}(X)) \vee \Sigma X \vee \Sigma \J_{n}(X) \period
    \end{align*}
    Moreover, the left vertical map in \eqref{diag:susp-james-diagram} is induced by the suspensions of the identity on $ X $, identity on $ \J_{n}(X) $, and the map $ i_n \colon \J_{n-1}(X) \to \J_{n}(X) $.
    Under the identifications 
    \begin{equation*}
        \Sigma (X\wedge \J_{n-1}(X)) \equivalent  X \wedge \Sigma \J_{n-1}(X) \andeq \Sigma (X\wedge \J_{n}(X)) \equivalent  X \wedge \Sigma \J_{n}(X)
    \end{equation*}
    of \Cref{lem:suspensionsmash}, we see that the map
    \begin{equation*}
        \id_{X} \smashprod r_{n-1} \colon \Sigma (X\wedge \J_{n-1}(X)) \equivalent  X \wedge \Sigma \J_{n-1}(X) \longrightarrow X \wedge \Sigma \J_{n}(X) \equivalent \Sigma (X\wedge \J_{n}(X))
    \end{equation*}
    is a retraction of $ \Sigma(\id_{X} \smashprod i_{n-1}) $.
    Hence the map 
    \begin{equation*}
        (\id_{X} \smashprod r_{n-1}) \vee \id \vee \id \colon \Sigma (X\wedge \J_{n}(X)) \vee \Sigma X \vee \Sigma \J_{n}(X) \longrightarrow \Sigma (X\wedge \J_{n-1}(X)) \vee \Sigma X \vee \Sigma \J_{n}(X)
    \end{equation*}
    supplies the desired retraction of the left vertical map in \eqref{diag:susp-james-diagram}.
\end{proof}

\subsection{Proofs of the metastable EHP sequence}\label{subsec:EHPproofs}

In this subsection, we present two proofs of the metastable EHP sequence in the setting of $ \infty $-topoi.
Before making a precise statement of the main result, we need the following easy lemma.

\begin{lemma}\label{easy-lemma}
    Let $\dX$ be an $\infty$-topos, $X\in \dX_{\ast}$ be a pointed $ 0 $-connected object, and $ n \geq 1 $ an integer.
    Then the composite
    \begin{equation*}
        \begin{tikzcd}[sep=1.5em]
            \J_{n-1}(X) \arrow[r, "u_n"] & \Omega \Sigma X \arrow[r, "\Hopf_n"] & \Omega \Sigma X^{\wedge n}
        \end{tikzcd}
    \end{equation*}
    is null. 
    Here $ \Hopf_n $ is the Hopf map of \Cref{hopf}.
\end{lemma}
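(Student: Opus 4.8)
The plan is to pass to adjoints and then quote the refined James Splitting of \Cref{partial-splitting}. If $ n = 1 $ then $ \J_{n-1}(X) = \J_0(X) = \ast $ and there is nothing to prove, so I may assume $ n \geq 2 $. Write $ q_n \colon \Sigma\Omega\Sigma X \simeq \bigvee_{i \geq 1} \Sigma X^{\wedge i} \to \Sigma X^{\wedge n} $ for the collapse map appearing in \Cref{hopf}, so that by definition $ \Hopf_n $ is the transpose of $ q_n $ under the suspension--loop adjunction $ \Sigma \dashv \Omega $ on $ \dX_{\ast} $. Since that adjunction takes null morphisms to null morphisms in both directions, the composite $ \Hopf_n \circ u_{n-1} \colon \J_{n-1}(X) \to \Omega\Sigma X^{\wedge n} $ (here $ u_{n-1} \colon \J_{n-1}(X) \to \Omega\Sigma X $ is the James filtration map) is null if and only if its transpose is null. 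Unwinding the adjunction, this transpose is the composite $ \Sigma\J_{n-1}(X) \xrightarrow{\Sigma u_{n-1}} \Sigma\Omega\Sigma X \xrightarrow{q_n} \Sigma X^{\wedge n} $, so the lemma reduces to showing that $ q_n \circ \Sigma u_{n-1} $ is null.

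For this, I would apply the second assertion of \Cref{partial-splitting} with $ n $ replaced by $ n - 1 $: under the splitting $ \Sigma\J_{n-1}(X) \simeq \bigvee_{1 \leq i \leq n-1} \Sigma X^{\wedge i} $ of \Cref{partial-splitting} and the splitting $ \Sigma\Omega\Sigma X \simeq \bigvee_{i \geq 1} \Sigma X^{\wedge i} $ of \Cref{prop:infiniteJamestopos}, the morphism $ \Sigma u_{n-1} $ is the inclusion of the first $ n-1 $ wedge summands. On the other hand, by \Cref{hopf} the morphism $ q_n $ is the collapse onto the $ n $-th summand: the morphism $ \bigvee_{i \geq 1} \Sigma X^{\wedge i} \to \Sigma X^{\wedge n} $ that is the identity on $ \Sigma X^{\wedge n} $ and null on every $ \Sigma X^{\wedge i} $ with $ i \neq n $. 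Hence $ q_n \circ \Sigma u_{n-1} $ restricts to the null morphism on each of the summands $ \Sigma X^{\wedge i} $ with $ 1 \leq i \leq n - 1 $ (since $ i < n $), so by the universal property of the coproduct $ q_n \circ \Sigma u_{n-1} $ is itself null. Transposing back gives the claim.

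The argument presents no real obstacle; it is essentially bookkeeping, and the only point requiring care is lining up the indexing convention of \Cref{partial-splitting} (whose second clause concerns $ \Sigma u_m $ hitting the first $ m $ summands) with the definition of $ q_n $ in \Cref{hopf} (projection onto the $ n $-th summand), while keeping track of the suspension--loop transposition. The vanishing is then forced by the strict inequality $ n - 1 < n $. Working at the level of suspensions, rather than directly with the unstable Hopf maps, is preferable here precisely because that is the level at which both \Cref{partial-splitting} and \Cref{hopf} are formulated.
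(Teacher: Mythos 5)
Your argument is correct and is essentially the paper's own proof: the paper likewise transposes under the $\Sigma \dashv \Omega$ adjunction and then declares the resulting composite $\Sigma\J_{n-1}(X) \to \Sigma\Omega\Sigma X \to \Sigma X^{\wedge n}$ null as an immediate consequence of \Cref{partial-splitting}, which you merely spell out (inclusion of the first $n-1$ summands followed by collapse onto the $n$-th). Your use of $u_{n-1}$ rather than the statement's $u_n$ for the map $\J_{n-1}(X)\to\Omega\Sigma X$ just corrects a small indexing slip in the lemma's statement and does not affect anything.
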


\begin{proof}
    It suffices to prove the corresponding statement on adjoints: in other words, we need to
    show that the composite
    \begin{equation*}
        \begin{tikzcd}[sep=1.5em]
            \Sigma \J_{n-1}(X) \arrow[r, "\Sigma u_n"] & \Sigma \Omega \Sigma X \arrow[r] & \Sigma X^{\wedge n}
        \end{tikzcd}
    \end{equation*}
    is null.
    This is an immediate consequence of \Cref{partial-splitting}.
\end{proof}

We can now state the metastable EHP sequence.

\begin{theorem}[metastable EHP sequence]\label{metastable}
    Let $ \dX $ be an $ \infty $-topos, $ k \geq 0 $ an integer, and $ X \in \dX_{\ast} $ a pointed $ k $-connected object.
    Then for every integer $ n \geq 1 $, the morphism $ \J_{n-1}(X) \to \fib(\Hopf_{n})$ induced by \Cref{easy-lemma} is $ ((n+1)(k+1) - 3)$-connected.
\end{theorem}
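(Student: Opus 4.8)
The plan is to give the short non-computational argument via the Blakers--Massey Theorem; a purely numerical bookkeeping then produces the stated range on the nose. First I would set up the pushout square coming from the James filtration. By \Cref{Jamescofib} the cofiber of $i_{n-1}\colon\J_{n-1}(X)\to\J_n(X)$ is $X^{\smashprod n}$, so there is a pushout square with $\J_{n-1}(X)$ in the top-left, top edge $i_{n-1}$, left edge the map $\J_{n-1}(X)\to\ast$, and pushout corner $X^{\smashprod n}$; write $q_n\colon\J_n(X)\to X^{\smashprod n}$ for the cofiber projection. By \Cref{cor:Jnconn} the object $\J_{n-1}(X)$ is $k$-connected, hence $\J_{n-1}(X)\to\ast$ is $k$-connected, and by \Cref{lem:inconnectedness} the morphism $i_{n-1}$ is $(n(k+1)-2)$-connected. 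The Blakers--Massey Theorem (\Cref{thm:BlakersMassey}) then shows the comparison morphism $\J_{n-1}(X)\to\ast\times_{X^{\smashprod n}}\J_n(X)\equivalent\fib(q_n)$ is $\bigl(k+(n(k+1)-2)\bigr)=((n+1)(k+1)-3)$-connected.

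Next I would compare $q_n$ with $\Hopf_n$. Since $u_{n-1}=u_n\circ i_{n-1}$ and $\Hopf_n\circ u_{n-1}$ is null (\Cref{easy-lemma}), the morphism $\Hopf_n\circ u_n\colon\J_n(X)\to\Omega\Sigma X^{\smashprod n}$ factors through $q_n$, yielding $\bar h\colon X^{\smashprod n}\to\Omega\Sigma X^{\smashprod n}$ together with a commutative square whose horizontal maps are $q_n,\Hopf_n$ and whose vertical maps are $u_n,\bar h$. The crucial point is that $\bar h$ is the unit of the $\Sigma\dashv\Omega$ adjunction. To see this I would pass to $\Sigma$-adjoints: by \Cref{hopf} the adjoint of $\Hopf_n$ is the collapse $\Sigma\Omega\Sigma X\equivalent\bigvee_{i\geq 1}\Sigma X^{\smashprod i}\to\Sigma X^{\smashprod n}$, so by \Cref{partial-splitting} the adjoint of $\Hopf_n\circ u_n$ is the collapse of $\Sigma\J_n(X)\equivalent\bigvee_{1\leq i\leq n}\Sigma X^{\smashprod i}$ onto its top summand; but \Cref{partial-splitting} also identifies $\Sigma i_{n-1}$ with the subwedge inclusion, hence $\Sigma q_n$ with this same (split) collapse. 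The resulting identity of adjoints $\widetilde{\bar h}\circ\Sigma q_n=\Sigma q_n$ then forces $\widetilde{\bar h}=\id$, so $\bar h$ is the unit. Since $X^{\smashprod n}$ is $(n(k+1)-1)$-connected (\enumref{prop:smashconn}{5}), the Freudenthal Suspension Theorem (\Cref{cor:Freudenthal}) shows $\bar h$ is $(2n(k+1)-2)$-connected.

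Finally I would feed this square into \Cref{cor:fibconn} with $\el=(n+1)(k+1)-3$: the vertical map $u_n$ is $((n+1)(k+1)-2)$-connected by \Cref{lem:unconn}, so in particular $\el$-connected, while $\bar h$ is $(2n(k+1)-2)$-connected, which is $\geq\el+1=(n+1)(k+1)-2$ since $n\geq 1$. Hence $\fib(q_n)\to\fib(\Hopf_n)$ is $((n+1)(k+1)-3)$-connected, and composing with the Blakers--Massey morphism of the first step, using stability of connected morphisms under composition (\enumref{prop:connectednessfacts}{0}), the composite $\J_{n-1}(X)\to\fib(q_n)\to\fib(\Hopf_n)$ is $((n+1)(k+1)-3)$-connected. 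One then checks this composite agrees with the morphism of \Cref{easy-lemma}: both lift $u_{n-1}$ along $\fib(\Hopf_n)\to\Omega\Sigma X$ along the same nullhomotopy of $\Hopf_n\circ u_{n-1}$, namely the one visible on $\Sigma\J_{n-1}(X)\equivalent\bigvee_{1\leq i\leq n-1}\Sigma X^{\smashprod i}$ after suspending. The main obstacle will be exactly this last identification together with the claim $\bar h\equivalent(\text{unit})$ — both require carefully threading the suspension adjunction through the compatibilities recorded in \Cref{partial-splitting}; the Blakers--Massey input itself is short and, pleasingly, produces the stated connectivity precisely. (Alternatively, as the paper observes, one can reduce to $\dX=\Spc$ via hypercompletion and invoke the classical computation, but I would present the Blakers--Massey proof as primary.)
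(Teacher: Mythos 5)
Your argument is correct and is essentially the paper's own internal (Blakers--Massey) proof: the same cofiber square from \Cref{Jamescofib} fed into \Cref{thm:BlakersMassey}, the same connectivity inputs (\Cref{cor:Jnconn}, \Cref{lem:inconnectedness}, \Cref{lem:unconn}, and \Cref{cor:Freudenthal} applied to the unit on $X^{\smashprod n}$), and the same use of \Cref{cor:fibconn} to compare $\fib(\J_n(X) \to X^{\smashprod n})$ with $\fib(\Hopf_n)$ -- only with the two steps carried out in the opposite order. The additional care you devote to identifying $\bar h$ with the unit and the composite with the morphism of \Cref{easy-lemma} is exactly the compatibility the paper leaves implicit when it asserts commutativity of its square \eqref{eq:squaretakefibsof}, so there is no gap beyond what the paper itself elides.
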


\begin{remark}
    The metastable EHP sequence for $ \infty $-topoi of hypersheaves on an ordinary site with enough points proven by Asok--Wickelgren--Williams \cite[Proposition 3.1.4]{MR3654105} is a special case of \Cref{metastable}.
\end{remark}


The first proof of \Cref{metastable} we present is \textit{internal} to $ \infty
$-topoi, and only uses basic facts about connectedness and the James construction, as well as the Blakers--Massey Theorem.
The second reduces to the $ \infty $-topos $ \Spc $ of spaces, then uses the homology Whitehead
Theorem and Serre spectral sequence to give a calculational proof of the metastable EHP sequence in the classical setting. 
Both perspectives are valuable, and we present the second here in part because the calculational proof of the metastable EHP sequence does not seem to be easy to locate in the literature.

\begin{proof}[Internal proof of \Cref{metastable}]
    First we show that it suffices to prove the claim where we replace $ \fib(\Hopf_n) $ by the fiber of the natural morphism $\J_{n}(X) \to X^{\smashprod n} $.
    Observe that we have a commutative square
    \begin{equation}\label{eq:squaretakefibsof}
        \begin{tikzcd}[sep=2em]
            \J_{n}(X) \arrow[r] \arrow[d, "u_n"'] & X^{\smashprod n} \arrow[d] \\
            \Omega\Sigma X \arrow[r, "\Hopf_n"'] & \Omega\Sigma X^{\smashprod n} \comma
        \end{tikzcd}
    \end{equation}
    where the right vertical morphism is the unit.
    Since $ X $ is $ k $-connected, the morphism
    \begin{equation*}
        u_n \colon \J_{n}(X) \to \Omega\Sigma X
    \end{equation*}
    is $ ((n+1)(k+1)-2) $-connected (\Cref{lem:unconn}) and $ X^{\smashprod n} $ is $ (n(k+1)-1) $-connected \enumref{prop:smashconn}{5}.
    By the Freudenthal Suspension Theorem (\Cref{cor:Freudenthal}) the unit morphism $ X^{\smashprod n} \to \Omega\Sigma X^{\smashprod n} $ is $ 2(n(k+1)-1) $-connected.
    Since $ n \geq 1 $, we have that 
    \begin{equation*}
        2(n(k+1)-1) \geq (n+1)(k+1)-2 \comma
    \end{equation*}
    so that both of the vertical morphisms in \eqref{eq:squaretakefibsof} are $ ((n+1)(k+1)-2) $-connected.
    Applying \Cref{cor:fibconn} to the square \eqref{eq:squaretakefibsof}, we see that the induced morphism on horizontal fibers
    \begin{equation*}
        \fib(\J_n(X) \to X^{\smashprod n}) \to \fib(\Hopf_n)
    \end{equation*}
    is $ ((n+1)(k+1)-3) $-connected.
    Therefore, to prove that the morphism $ \J_{n-1}(X) \to \fib(\Hopf_n) $ is $ ((n+1)(k+1)-3) $-connected, it suffices to show that the induced morphism
    \begin{equation}\label{eq:wantconnected}
        \J_{n-1}(X) \to \fib(\J_n(X) \to X^{\smashprod n})
    \end{equation}
    is $ ((n+1)(k+1)-3) $-connected.

    Since $ X $ is $ k $-connected, $ \J_{n-1}(X) $ is $ k $-connected (\Cref{cor:Jnconn}) and the morphism
    \begin{equation*}
        i_{n-1} \colon \J_{n-1}(X) \to \J_n(X) 
    \end{equation*}
    is $ (n(k+1) - 2) $-connected (\Cref{lem:inconnectedness}). 
    Applying the Blakers--Massey Theorem (\Cref{thm:MR4186137}) to the cofiber sequence
    \begin{equation*}
        \begin{tikzcd}[sep=2em]
            \J_{n-1}(X) \arrow[r, "i_{n-1}"] \arrow[d] \arrow[dr, phantom, very near end, "\ulcorner", xshift=0.75em, yshift=-0.5em]  & \J_n(X) \arrow[d] \\
            \ast \arrow[r] & X^{\smashprod n}  
        \end{tikzcd}
    \end{equation*}
    provided by \cref{Jamescofib}, we see that morphism \eqref{eq:wantconnected} is $ ((n+1)(k+1)-3) $-connected.
\end{proof}

\begin{proof}[Computational proof of \Cref{metastable}]
    Let $ m \colonequals (n+1)(k+1) - 3 $; we need to show that the map
    \begin{equation*}
        \J_{n-1}(X)\to \fib(\Hopf_n)
    \end{equation*}
    is $ m $-connected.
    The following two facts allow us to reduce to proving the claim in the case that $ \dX = \Spc $.
    \begin{enumerate}[(1)]
        \item If the conclusion of \Cref{metastable} holds for the $ \infty $-topos $ \Spc $, then it holds for any presheaf $ \infty $-topos.

        \item If $ L \colon \dY \to \dX $ is a left exact left adjoint between $ \infty $-topoi, then $ L $ preserves: suspensions, loops, smash products, fibers, the James filtration, and $ m $-connectedness (this last fact is \HTT{Proposition}{6.5.1.16}).
    \end{enumerate}

    Now we prove the claim for $ \dX = \Spc $.
    The claim is trivial if $ n = 1 $, so assume that $ n \geq 2 $.
    Since $X$ is $k$-connected by assumption, the smash power \smash{$X^{\wedge n}$} is $ (nk+n-1) $-connected \enumref{prop:smashconn}{5}.
    Since \smash{$\Omega \Sigma X^{\wedge n}$} is simply-connected, the Serre spectral sequence for (integral) homology has \smash{$\E^2$}-page
    \begin{equation*}
        \E^2_{p,q} = \H_p(\Omega \Sigma X^{\wedge n}; \H_q(\fib(\Hopf_n))) \cong \H_p(\Omega \Sigma X^{\wedge n})\otimes \H_q(\fib(\Hopf_n)) \period
    \end{equation*}
    Since 
    \begin{equation*}
        \H_p(\Omega \Sigma X^{\wedge n}) \cong \bigoplus_{i\geq 0} \wt{\H}_p(X^{\wedge n})^{\otimes i} \comma
    \end{equation*}
    and $\wt{\H}_p(X^{\wedge n})^{\otimes i}$ becomes nontrivial in degree $ i(nk+n+2) $, we find that 
    \begin{equation*}
        \H_p(\Omega \Sigma X^{\wedge n}) \cong \H_p(X^{\wedge n}) \text{\qquad for\qquad}  p < 2(nk+n+2) \period
    \end{equation*}
    In particular, \smash{$ \E^2_{p,0} = \H_p(X^{\wedge n}) $} for $ p < 2(nk+n+2) $.
    Consequently, the Serre spectral sequence has no nontrivial differentials off bidegrees
    $ (p,0) $ with $ p < 2(nk+n+2) $.

    The $\E^2$-page of this spectral sequence is very simple if $ p+q <(n+1)(k+1) $:
    in this range, $\E^2_{p,q}$ vanishes unless one of $p$ or $q$ is zero, in which case
    \begin{equation*}
        \E^2_{p,0} = \H_p(X^{\wedge n}) \andeq \E^2_{0,q} = \H_q(\fib(\Hopf_n))
    \end{equation*}
    (note that $(n+1)(k+1) \leq 2(nk+n+2)$).
    Recall that for $ p < 2(nk+n+2) $, the Serre spectral sequence has no nontrivial differentials off bidegrees $(p,0)$.
    Moreover, for $q<(n+1)(k+1)-1$, are also no nontrivial differentials with target in bidegree $(0,q)$.
    Consequently, for
    \begin{equation*}
        p+q < (n+1)(k+1) - 2 \comma
    \end{equation*}
    we find that the Serre spectral sequence collapses at the $\E^2$-page, and therefore that 
    \begin{equation*}
	   \wt{\H}_\ast(\Omega \Sigma  X) \cong \wt{\H}_{\ast}(\fib(\Hopf_n)) \oplus \wt{\H}_\ast(\Omega \Sigma X^{\wedge n}) \text{\qquad for\qquad} \ast < (n+1)(k+1) - 2 \period
    \end{equation*}
    The map $\J_{n-1}(X)\to \fib(\Hopf_n)$ then induces a homology equivalence in degrees $< (n+1)(k+1)
    - 2 $.
    We conclude by the homology Whitehead Theorem.
\end{proof}

\begin{remark}
    In the case that $ n = 2 $ and $ \dX = \Spc $, the computational proof of the metastable EHP sequence given here reduces to the proof presented in \cite[Proposition 3.1.2]{MR3654105}.
\end{remark}


\DeclareFieldFormat{labelnumberwidth}{#1}
\printbibliography[keyword=alph]
\DeclareFieldFormat{labelnumberwidth}{{#1\adddot\midsentence}}
\printbibliography[heading=none, notkeyword=alph]

\end{document}